\newcommand{\ud}[0]{\,\mathrm{d}}
\newcommand{\dist}[0]{\operatorname{dist}}
\newcommand{\eps}[0]{\varepsilon}
\newcommand{\abs}[1]{|#1|}
\newcommand{\Babs}[1]{\Big|#1\Big|}
\newcommand{\Norm}[2]{\|#1\|_{#2}}
\newcommand{\BNorm}[2]{\Big\|#1\Big\|_{#2}}
\newcommand{\pair}[2]{\langle #1,#2 \rangle}
\newcommand{\Bpair}[2]{\Big\langle #1,#2 \Big\rangle}
\newcommand{\ave}[1]{\langle #1\rangle}
\newcommand{\BMO}[0]{\operatorname{BMO}}
\newcommand{\supp}[0]{\operatorname{supp}}
\newcommand{\R}{\mathbb{R}}
\newcommand{\Z}{\mathbb{Z}}
\numberwithin{equation}{section}
\theoremstyle{plain}
\newtheorem{theorem}[equation]{Theorem}
\newtheorem{proposition}[equation]{Proposition}
\newtheorem{lemma}[equation]{Lemma}
\theoremstyle{definition}
\newtheorem{definition}[equation]{Definition}
\theoremstyle{notation}
\theoremstyle{remark}
\newtheorem{remark}[equation]{Remark}
\title{The local $Tb$ theorem with rough test functions}
\author{Tuomas Hyt\"onen}
\address{Department of Mathematics and Statistics, University of Helsinki, P.O.B. 68, FI-00014 Helsinki, Finland}
\email{tuomas.hytonen@helsinki.fi}
\author{Fedor Nazarov}
\address{Kent State University, Department of Mathematical Sciences, Summit Street, Kent OH 44242, United States}
\email{nazarov@math.kent.edu}
\subjclass[2020]{42B20, 42B25}
\keywords{Singular integral, Calder\'on--Zygmund operator, boundedness criterion, accretive system, stopping time}
\begin{document}

\maketitle

\begin{abstract}
We prove a local $Tb$ theorem under close to minimal (up to certain `buffering') integrability assumptions, conjectured by S.~Hofmann (El Escorial, 2008): Every cube is assumed to support two non-degenerate functions $b^1_Q\in L^p$ and $b^2_Q\in L^q$ such that $1_{2Q}Tb^1_Q\in L^{q'}$ and $1_{2Q}T^*b^2_Q\in L^{p'}$, with appropriate uniformity and scaling of the norms. This is sufficient for the $L^2$-boundedness of the Calder\'on--Zygmund operator $T$, for any $p,q\in(1,\infty)$, a result previously unknown for simultaneously small values of $p$ and $q$. We obtain this as a corollary of a local $Tb$ theorem for the maximal truncations $T_{\#}$ and $(T^*)_{\#}$: for the $L^2$-boundedness of $T$, it suffices that $1_Q T_{\#}b^1_Q$ and $1_Q (T^*)_{\#}b^2_Q$ be uniformly in $L^0$. The proof builds on the technique of suppressed operators from the quantitative Vitushkin conjecture due to Nazarov--Treil--Volberg.
\end{abstract}

\setcounter{tocdepth}{1}
\tableofcontents

\section{Introduction}

The first $Tb$ theorems were proved by David, Journ\'e and Semmes \cite{DJS}, and McIntosh and Meyer \cite{McMe}. Their idea was to characterize the $L^2$-boundedness of a singular integral operator $T$,
\begin{equation*}
  Tf(x)=\int_{\R^d}K(x,y)f(y)\ud y,
\end{equation*}
where $K$ is a standard Calder\'on--Zygmund kernel, by its (and its adjoint's) action on just one sufficiently non-degenerate function $b$. Thus, they generalized the celebrated $T1$ theorem of David and Journ\'e \cite{DJ:T1}, where this function was required to be $b\equiv 1$.

Another significant step in this type of characterizations was taken by Christ \cite{Christ:90}, who introduced the idea of a \emph{local $Tb$ theorem}. Rather than testing $T$ and $T^*$ on two globally well-behaved (and hence not so easy to find) functions $b_1$ and $b_2$, the operators can be tested against a family of local functions $b_Q^1$ and $b^2_Q$, indexed by the cubes (say) $Q$ on which they are supported, each of which is only required to satisfy a set of conditions on its `own' cube.

Besides necessary non-degeneracy requirements, Christ's assumptions on his test functions consisted of the uniform boundedness $b^1_Q,b^2_Q,Tb^1_Q,T^*b^2_Q\in L^\infty$. Weakening these conditions has been a topic of subsequent developments. Nazarov, Treil and Volberg \cite{NTV:Duke} showed (even in the more general context of possibly non-doubling underlying measures in place of the Lebesgue measure) that it suffices to have $b^1_Q,b^2_Q\in L^\infty$ and $T b^1_Q,T^*b^2_Q\in\BMO$, uniformly in $Q$. On the other hand, for certain dyadic model operators, Auscher, Hofmann, Muscalu, Tao and Thiele \cite{AHMTT} were able to relax these conditions to a substantially lower degree of integrability, namely
\begin{equation}\label{eq:Hofmann}
  b^1_Q\in L^p,\quad b^2_Q\in L^q,\quad 1_QTb^1_Q\in L^{q'},\quad 1_QT^*b^2_Q\in L^{p'}
\end{equation}
for any $p,q\in(1,\infty)$, where the different $L^r$ norms are appropriately scaled relative to $\abs{Q}$. The question then became whether these testing conditions for the model case also suffice for genuine singular integral operators. This was the first of the four open problems on local $Tb$ theorems formulated by Hofmann during his plenary lectures at the International Conference on Harmonic Analysis and P.D.E. in El Escorial, 2008; it was motivated by possible applications to layer potentials and to free boundary theory (see \cite[Section 3.3.1]{Hof:Escorial}).

Towards the solution of Hofmann's problem, the following developments have taken place. First, Hofmann's \cite{Hof:unpubl} positive result concerning the case $b^i_Q\in L^2$, $T_i b^i_Q\in L^{2+\eps}$. Next, Auscher and Yang's \cite{AY} elimination of the $\eps>0$ by a reduction to the dyadic case. In fact, they settled the result for all `large enough' pairs of exponents $p,q\in(1,\infty)$, namely, subject to the sub-duality condition $1/p+1/q\leq 1$. Finally, Auscher and Routin's \cite{AR} work on general pairs $p,q\in(1,\infty)$: they gave a direct proof of the sub-duality theorem just mentioned, and obtained a positive result for general exponents under additional side conditions of `weak boundedness' type (but rather more technical than the usual forms of such assumptions). See also \cite{HyMa:localTb,LaMa:localTb,LaVa:perfect} for some related work in the direction of singular integrals with respect to more general underlying measures.

In the paper at hand, we solve a buffered version of Hofmann's problem for all exponents $p,q\in(1,\infty)$; namely, we strengthen the last two inclusions in \eqref{eq:Hofmann} to
\begin{equation}\label{eq:Hofmann2}
  b^1_Q\in L^p,\quad b^2_Q\in L^q,\quad 1_{2Q}Tb^1_Q\in L^{q'},\quad 1_{2Q}T^*b^2_Q\in L^{p'}
\end{equation}
--- a condition also present in \cite{AR}, where further assumptions were imposed. In the sub-duality case $1/p+1/q\leq 1$, the buffered assumption \eqref{eq:Hofmann2} already follows from the seemingly weaker \eqref{eq:Hofmann} (cf. Remark \ref{rem:bufferRedundant}). In a recent work of Martikainen, Mourgoglou and Tolsa \cite{MMT}, the buffering is shown to be redundant in a slightly bigger range of exponents, but their main result for local $Tb$ theorems \cite[Theorem 4.6]{MMT}, while also allowing more general underlying measures, is restricted to antisymmetric kernels, $K(x,y)=-K(y,x)$. Our present contribution, while not strictly comparable to \cite{HyMa:localTb,LaMa:localTb,LaVa:perfect,MMT}, is the most general available form of local $Tb$ theorems for general (not necessarily antisymmetric) singular integrals with respect to the underlying Lebesgue measure.

We are going to view \eqref{eq:Hofmann2} as sufficient conditions for another natural set of assumptions stated in terms of the maximal truncated singular integral
\begin{equation*}
  T_{\#}f(x):=\sup_{\eps>0}\abs{T_\eps f(x)},\qquad T_\eps f(x):=\int_{\abs{x-y}>\eps}K(x,y)f(y)\ud y.
\end{equation*}
We make the assumption that for some $u\in(1,\infty)$ and $v\in[0,\infty)$ (sic!), we have
\begin{equation}\label{eq:new}
  b^1_Q,b_Q^2\in L^u,\quad 1_Q T_{\#}b^1_Q,\ 1_Q(T^*)_{\#}b^2_Q \in L^v,
\end{equation}
with appropriate uniform scaling. As we will prove, \eqref{eq:new} for $u=v<\min\{p,q,p',q'\}$ is a consequence of \eqref{eq:Hofmann}. But \eqref{eq:new} seems more natural in the sense that (unlike \eqref{eq:Hofmann} in the super-duality case $1/p+1/q>1$) it is obviously necessary for the $L^2$-boundedness of $T$, which implies the $L^u$-boundedness of $T_{\#}$ by classical theory. And \eqref{eq:new}, even with just a uniform $L^0$-condition for the images, is also a sufficient condition for the $L^2$-boundedness, as a proper $Tb$ condition should.

Our method of proving this result is new in the context of Hofmann's problem, although borrowed from other developments in the $Tb$ circle of ideas, in particular, the approach to Vitushkin's conjecture by Nazarov, Treil and Volberg \cite{NTV:Vitushkin}. Namely, we show that it is possible to perturb the rough test functions $b_Q\in L^u$ so as to obtain better functions $\tilde{b}_Q\in L^\infty$, which are still well-behaved under a \emph{suppressed} singular integral
\begin{equation*}
  T_\Phi f(x):=\int_{\R^d}K_\Phi(x,y)f(y)\ud y,\qquad K_\Phi(x,y):=\frac{\abs{x-y}^{2m}K(x,y)}{\abs{x-y}^{2m}+\Phi(x)^m\Phi(y)^m},
\end{equation*}
for a suitably chosen nonnegative Lipschitz function $\Phi$. We can then run a local $Tb$ argument for the suppressed operator $T_\Phi$ and the bounded test functions $\tilde{b}_Q$. Once the boundedness of $T_\Phi$ has been established, this can be used to construct yet another set of bounded test functions, but now for the original operator $T$. Another local $Tb$ argument with bounded test functions then allows to deduce the boundedness of $T$ itself.

In the following section, we give a detailed statement of the main theorems and a technical outline of the entire argument, where the main auxiliary propositions are stated without proof. The proofs of these intermediate results are then provided in the subsequent sections.

\subsection*{Acknowledgements}
This research was started during T.H.'s visit to the Kent State University in October 2011.
Over different periods of the duration of this work, T.H. was supported by the European Union through the ERC Starting Grant ``Analytic--probabilistic methods for borderline singular integrals'' (Grant Agreement No.~278558), as well as by the Academy of Finland through project Nos.~130166, 133264 and 314829 and through the Finnish Centre of Excellence in Analysis and Dynamics Research (project Nos.~271983 and  307333). 

{The anonymous referee is gratefully acknowledged for his or her thorough reading and friendly comments that eliminated an embarrassing number of careless details from the manuscript.}

\section{Technical outline}

Let $T$ be a linear operator given by
\begin{equation}\label{eq:defT}
  Tf(x)=\int_{\R^d} K(x,y)f(y)\ud y,
\end{equation}
where $K$ is a Calder\'on--Zygmund standard kernel:
\begin{equation}\label{eq:CZK}
  \abs{x-y}^d\abs{K(x,y)}+
  \abs{x-y}^{d+\alpha}\frac{\abs{K(x,y)-K(x',y)}}{\abs{x-x'}^\alpha}+
  \abs{x-y}^{d+\alpha}\frac{\abs{K(x,y)-K(x,y')}}{\abs{y-y'}^\alpha}
  \lesssim 1
\end{equation}
for all $x,x',y,y'$ with $\abs{x-x'}+\abs{y-y'}<\tfrac12\abs{x-y}$ and some fixed $\alpha\in(0,1]$. For convenience, we assume that $K$ is also bounded, \emph{qualitatively}, so that formulae like \eqref{eq:defT} are meaningful, but this will never be used in the quantitative estimates.

\begin{definition}[Accretive system]
Let $p\in[1,\infty]$, $u\in[0,\infty]$. A \emph{$(p,u)$-accretive system for an operator $T$} is a family of functions $b_Q$, indexed by all dyadic cubes $Q$, such that
\begin{equation}\label{eq:bSupport}
  \supp b_Q\subseteq Q,
\end{equation}
\begin{equation}\label{eq:bNondeg}
  \fint_Q b_Q \ud x = 1,
\end{equation}
\begin{equation}\label{eq:bSize}
  \Big(\fint_Q\abs{b_Q}^p\ud x\Big)^{1/p}\lesssim 1,
\end{equation}
\begin{equation}\label{eq:TbSize}
  \Big(\fint_Q\abs{Tb_Q}^{u}\ud x\Big)^{1/u}\lesssim 1,
\end{equation}
with the usual reformulation if $p$ or $u$ is $\infty$. If $u=0$, we interpret \eqref{eq:TbSize} as follows: For every $\eps>0$, there exists $\lambda$ such that for all $Q$ we have
\begin{equation}\label{eq:TbSize0}
  \frac{1}{\abs{Q}}\abs{Q\cap\{\abs{Tb_Q}>\lambda\}}<\eps.
\end{equation}
The family is called a \emph{buffered $(p,u)$-accretive system for $T$} if \eqref{eq:TbSize} is replaced by the stronger condition that
\begin{equation}\label{eq:TbSize2}
  \Big(\fint_{2Q}\abs{Tb_Q}^{u}\ud x\Big)^{1/u}\lesssim 1,
\end{equation}
with similar modifications for $u\in\{0,\infty\}$.
\end{definition}

Addressing a problem posed by Hofmann \cite[Section 3.3.1]{Hof:Escorial}, we prove the following:

\begin{theorem}\label{thm:Hofmann}
Let $p,q\in(1,\infty)$. Suppose that there is a buffered $(p,q')$-accretive system for $T$, and a buffered $(q,p')$-accretive system for $T^*$.
Then $\Norm{T}{L^2\to L^2}$ is bounded by a constant depending only on the implied constants in \eqref{eq:CZK}, \eqref{eq:bSize} and \eqref{eq:TbSize2}.
\end{theorem}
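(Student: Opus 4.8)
The plan is to follow the three-stage strategy sketched in the introduction, using suppressed operators as the central device. First I would set up the machinery of suppressed operators $T_\Phi$ for nonnegative Lipschitz functions $\Phi$, recording the key kernel estimates: $K_\Phi$ is again a standard Calder\'on--Zygmund kernel with constants independent of $\Phi$, and crucially the truncated operators $(T_\Phi)_\varepsilon$ are dominated pointwise (up to constants) by $T_{\#}$ applied to a suitable dilate of the function, so that any hypothesis phrased via $T_{\#}$ transfers to $T_\Phi$. The suppression parameter $m$ is chosen large (depending on $d$, $\alpha$ and the exponent $u$), which will be forced by the perturbation estimates below.

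The first main reduction is to convert the rough buffered accretive systems into \emph{bounded} ones for a suppressed operator. Given $b_Q\in L^u$ with $\fint_Q b_Q=1$ and $\fint_{2Q}\abs{T b_Q}^u\lesssim 1$, I would, for each fixed $Q$, build a Lipschitz function $\Phi=\Phi_Q$ that is comparable to $\ell(Q)$ on the (small-measure) set where $\abs{b_Q}$ or $\abs{T_{\#}b_Q}$ is large, and near zero where both are controlled; the Lipschitz constant of $\Phi$ is bounded absolutely because $\Phi$ interpolates between scale $\ell(Q)$ and scale $0$ over a region of comparable diameter. Truncating $b_Q$ on the bad set then yields $\tilde b_Q\in L^\infty$ with $\norm{\tilde b_Q}{\infty}\lesssim 1$, still with a nonzero (indeed $\gtrsim 1$) average on $Q$, and the suppression precisely kills the contribution of the excised region to $T_\Phi\tilde b_Q$, so that $T_\Phi\tilde b_Q\in L^\infty$ on $2Q$ uniformly. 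One then needs to patch these cube-by-cube constructions: either run the local $Tb$ argument directly with a family $\{(\Phi_Q,\tilde b_Q)\}$ indexed by $Q$, or — cleaner — observe that the local $Tb$ machinery only ever touches finitely many cubes at a time, so a stopping-time/good-$\lambda$ selection lets one work with a single $\Phi$ on each generation.

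The second stage is a local $Tb$ theorem with \emph{bounded} test functions, applied to $T_\Phi$. This is the classical regime (Christ, Nazarov--Treil--Volberg): with $b^1_Q,b^2_Q,T_\Phi b^1_Q,T_\Phi^*b^2_Q\in L^\infty$ uniformly, one runs a random dyadic grid / stopping cube argument, splitting $\mathbf 1$ (or a generic $f$) into martingale-difference type pieces adapted to the $b$'s, and controls the matrix of the operator in the corresponding basis using the standard kernel bounds of $K_\Phi$ together with the testing conditions; the buffered hypothesis on $2Q$ is what makes the stopping-cube/neighbouring-cube interactions summable. This yields $\Norm{T_\Phi}{L^2\to L^2}\lesssim 1$, uniformly in $\Phi$ (hence in $Q$). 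The third stage reverses the suppression: knowing $T_\Phi$ is bounded for all admissible $\Phi$, and since $K-K_\Phi$ is supported essentially where $\abs{x-y}\lesssim\Phi(x)^{1/2}\Phi(y)^{1/2}$ with fast decay, one recovers bounded ($L^\infty$) test functions for $T$ itself — for instance $T$ applied to $\tilde b_Q$ now differs from $T_\Phi\tilde b_Q$ by a harmless local term — and a final application of the bounded local $Tb$ theorem, now to $T$ directly, gives $\Norm{T}{L^2\to L^2}\lesssim 1$.

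I expect the main obstacle to be the perturbation/suppression lemma in the first stage: one must simultaneously (i) keep the Lipschitz constant of $\Phi$ universally bounded, (ii) ensure the excised ``bad'' set has small enough relative measure that the average $\fint_Q\tilde b_Q$ stays bounded below, using only $b_Q\in L^u$ and $T_{\#}b_Q\in L^u$ for a \emph{single} finite $u>1$ (so Chebyshev gives only a weak-type, not exponential, smallness), and (iii) choose the suppression power $m$ large enough that $T_\Phi\tilde b_Q$ genuinely lands in $L^\infty$ on $2Q$ despite the function $\tilde b_Q$ still living at scale $\ell(Q)$ near the boundary of the bad set. Balancing these three requirements — in particular checking that the quantitative loss in (ii) is compensated by the gain in (iii) for $m=m(d,\alpha,u)$ — is the technical heart of the argument; the two bounded-test-function local $Tb$ applications, while not short, are by now standard.
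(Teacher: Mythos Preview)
Your high-level architecture---pass to suppressed operators $T_\Phi$, manufacture bounded test functions for $T_\Phi$, run a local $Tb$ argument for $T_\Phi$, then undo the suppression---matches the paper's strategy. But several of the concrete mechanisms you propose would not close.

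First, a logical gap: you silently assume control on $T_\# b_Q$, but the theorem only gives you $Tb^1_Q\in L^{q'}$ and $T^*b^2_Q\in L^{p'}$. Converting this into control on $T_\# b^1_Q$ requires a Cotlar-type inequality $T_\# f\lesssim M_{q'}(Tf)+M_{p}f$, whose proof uses the \emph{dual} system $b^2_Q$ (Proposition~\ref{prop:testTtestTmax} in the paper). This step is not routine and should be named.

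Second, and more seriously, your construction of $\Phi$ is at the wrong scale. If $\Phi\approx\ell(Q)$ on the bad set and $\Phi\approx 0$ on the good set, then Lipschitz-$1$ forces $\Phi>0$ on an $\ell(Q)$-neighbourhood of every bad point, which in general has measure comparable to $\abs{Q}$, destroying the key smallness $\abs{\{\Phi>0\}}\ll\abs{Q}$ that you need at the end. Worse, the suppression factor involves the \emph{product} $\Phi(x)\Phi(y)$: at a good point $x$ with $\Phi(x)\approx 0$ one has $K_\Phi(x,y)=K(x,y)$ regardless of $\Phi(y)$, so the suppression does nothing there and your claim that it ``kills the contribution of the excised region'' fails. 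The correct $\Phi$ is multiscale: $\Phi(x)=\sup_{Q'\in\mathscr{B}}\dist(x,(3Q')^c)$ over the Calder\'on--Zygmund stopping cubes $Q'$ of $b_Q$; then for $x\in 2Q'$, $y\in Q'$ one has $\Phi(x),\Phi(y)\gtrsim\ell(Q')$ and hence $\abs{K_\Phi(x,y)}\lesssim\ell(Q')^{-d}$. Combined with the \emph{mean-zero} bad pieces $d_{Q'}$ of the Calder\'on--Zygmund decomposition (not a naive truncation, which lacks the cancellation needed for the far-field estimate), this yields $\abs{T_\Phi d_{Q'}(x)}\lesssim(\ell(Q')/(\ell(Q')+\abs{x-c_{Q'}}))^{d+\alpha}$. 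Even with this correct setup one only obtains $T_\Phi\tilde b_Q\in L^p$, never $L^\infty$; your target~(iii) is too ambitious, and the intermediate ``baby $Tb$'' theorem must be run with $(\infty,p)$ systems on \emph{sparse} subcubes, not with everything bounded.

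Third, your mechanism for undoing the suppression (``$T\tilde b_Q$ differs from $T_\Phi\tilde b_Q$ by a harmless local term'') does not work: $K-K_\Phi$ is not locally integrable near the diagonal on $\{\Phi>0\}$, so this is not a bounded perturbation. The paper instead constructs a \emph{new} test function $b_{Q_0}:=c\,1_{Q_0\cap\{\Phi=0\}}$, for which $Tb_{Q_0}=T_\Phi b_{Q_0}$ holds \emph{exactly} since $\supp b_{Q_0}\subseteq\{\Phi=0\}$; the freshly proven bound for $T_\Phi$ then shows this is an $(\infty,s)$ accretive system for $T$ itself, and one finishes via a second local $Tb$ argument and the local $T1$ theorem.
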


The first theorem of this flavour was proven for so-called perfect dyadic operators \cite{AHMTT}. For Calder\'on--Zygmund operators, prior to our work, it was known in the subduality case: $1/p+1/q\leq 1$ \cite{AR,AY}. For $1/p+1/q>1$, it had only been verified under additional technical assumptions~\cite{AR}.

\begin{remark}\label{rem:bufferRedundant}
In the subduality case, a $(p,q')$-accretive system is automatically buffered, i.e., \eqref{eq:TbSize2} already follows from the other conditions by the following Hardy inequality:
\begin{equation*}
   \int_{3Q\setminus Q}\Big(\int_Q\frac{\abs{f(y)}}{\abs{x-y}^d}\ud y\Big)^u\ud x
   \lesssim\Norm{1_Q f}{L^u}^u,\qquad u\in(1,\infty),
\end{equation*}
{whose dualized form is recorded in \cite[p.~307]{AR}; see also \cite[Lemma 2.4]{AR} for a more general statement (with proof) in a doubling metric space in place of $\R^d$.}

Our argument still requires the buffering for general exponents.
\end{remark}

%


We deduce Theorem~\ref{thm:Hofmann} as a corollary to a variant, natural in our opinion, where \eqref{eq:TbSize2} is replaced by an assumption on the maximal truncated operator
\begin{equation*}
  T_{\#}f(x):=\sup_{\varepsilon>0}\abs{T_\varepsilon f(x)},\qquad T_{\varepsilon}f(x):=\int_{\abs{x-y}>\varepsilon}K(x,y)f(y)\ud y.
\end{equation*}
Let us first observe that the above conditions on $T$ imply similar conditions on $T_{\#}$:

\begin{proposition}\label{prop:testTtestTmax}
Suppose that $b^1_Q$ is a buffered $(p,q')$ accretive system for $T$, and $b^2_Q$ is a buffered $(q,p')$ accretive system for $T^*$. Then $b^1_Q$ is also a  (buffered) $(p,u)$ accretive system for $T_{\#}$, and $b^2_Q$ is a (buffered) $(q,v)$ accretive system for $(T^*)_{\#}$, for any $u<\min\{p,q'\}$ and any $v<\min\{q,p'\}$.
\end{proposition}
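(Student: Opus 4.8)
The plan is to transfer information from $T$ to $T_\#$ by the standard Cotlar-type comparison, exploiting that the test functions $b^1_Q$ and $b^2_Q$ for $T$ and $T^*$ (respectively) form a \emph{pair}: the presence of a good system for the adjoint will let us control the truncation error. Write $T_\# b^1_Q(x) = \sup_\eps |T_\eps b^1_Q(x)|$ and split, for a fixed $x\in 2Q$ and $\eps>0$, $b^1_Q = 1_{B(x,2\eps)}b^1_Q + 1_{B(x,2\eps)^c}b^1_Q$. On the far part, $T_\eps$ acts as an absolutely convergent integral over $|x-y|>\eps$, and on that region the difference between $T_\eps$ and the full operator $T$ (restricted to the far part) is an integral over the annulus $\eps<|x-y|\le 2\eps$, which is dominated pointwise by the Hardy--Littlewood maximal function $M b^1_Q(x)$ via the kernel bound $|K(x,y)|\lesssim |x-y|^{-d}$. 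Thus $|T_\eps(1_{B(x,2\eps)^c}b^1_Q)(x)| \lesssim |T(1_{B(x,2\eps)^c}b^1_Q)(x)| + M b^1_Q(x)$, and the first term in turn is $|Tb^1_Q(x)| + |T(1_{B(x,2\eps)}b^1_Q)(x)|$. So everything reduces to controlling $\sup_\eps|T(1_{B(x,2\eps)}b^1_Q)(x)|$, the Cotlar "local" term. The near part $1_{B(x,2\eps)}b^1_Q$ is supported in a cube $R$ of sidelength comparable to $\eps$ containing $x$; here one uses the accretive system $b^2_R$ for $T^*$ to write, after a weak-boundedness-type manipulation, $T(1_R b^1_Q)$ in terms of $\fint_R T(1_R b^1_Q)\cdot b^2_R$ plus oscillation terms controlled by $T^*b^2_R$. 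This is the usual mechanism by which a two-sided $Tb$ hypothesis upgrades $T$-estimates to $T_\#$-estimates without any a priori $L^2$ bound.

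Carrying this out, one obtains a pointwise bound of the form $T_\# b^1_Q(x) \lesssim |Tb^1_Q(x)| + M b^1_Q(x) + \mathcal{M}_{b^2}(b^1_Q)(x)$ on $2Q$, where $\mathcal{M}_{b^2}$ is an auxiliary maximal operator built from the far behaviour of the $b^2_R$ system (essentially $M(T^*b^2_\cdot)$ composed with averages of $b^1_Q$), each summand being bounded on $L^r$ for a suitable range of $r$: $M$ on every $L^r$, $r>1$; the $Tb^1_Q$ term in $L^{q'}$ by the buffered hypothesis \eqref{eq:TbSize2}; and the $b^2$-term in $L^s$ for $s<q$ since $b^2_R\in L^q$ and $T^*b^2_R\in L^{p'}$. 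Taking $L^u$ norms over $2Q$ with $u<\min\{p,q'\}$ (and symmetrically $v<\min\{q,p'\}$ for $(T^*)_\#$) and using $|b^1_Q|\lesssim 1$ in $L^p$ via \eqref{eq:bSize}, all three contributions are $\lesssim 1$ after the appropriate scaling, which gives the buffered $(p,u)$-accretive system property \eqref{eq:TbSize2} for $T_\#$ and, by the same computation restricted to $Q$, the plain one.

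For the off-diagonal estimates \eqref{eq:TOffDiag} for $T_\#$: fix a subcube $Q'\subseteq Q$ and consider $T_\#(1_{(3Q')^c}b^1_Q)$ on $Q'$. For $x\in Q'$ and any $\eps>0$, the truncation again costs only an annulus integral $\lesssim M b^1_Q(x)$, so it suffices to bound $\fint_{Q'}|T(1_{(3Q')^c}b^1_Q)|\ud x$. Split $1_{(3Q')^c}b^1_Q = b^1_Q - 1_{3Q'}b^1_Q$; the term $T b^1_Q$ is handled by the buffered assumption \eqref{eq:TbSize2} (on $2Q\supseteq Q'$ after passing to a slightly enlarged cube, or directly on $Q$), and $T(1_{3Q'}b^1_Q)$ restricted to $Q'$ is controlled, as in the first part, by the adjoint system $b^2_{R}$ on cubes $R\sim Q'$ together with $M b^1_Q$. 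Here one must be careful that the cube $R$ on which we test $T^*$ lies inside the ample collection; this is exactly what forces us to allow an exceptional fraction $\sigma$, discarding the maximal subcubes $\tilde Q$ where the $b^2_{\tilde Q}$ machinery or the averages $\fint_{\tilde Q}|b^1_Q|$ behave badly, and to let $C_\sigma$ depend on $\sigma$.

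The main obstacle I anticipate is not any single estimate but the bookkeeping in the Cotlar/local term: one has to pass from the truncation scale $\eps$ to an honest dyadic cube $R$ with $x\in R$ and $\ell(R)\sim\eps$, verify that $R$ can be taken within the ample collection after removing a set of measure $\le\sigma|Q|$, and then invoke the accretive system $b^2_R$ for $T^*$ uniformly over all such $R$ — all while keeping the constants independent of $Q$ and tracking the $|Q|$-scaling of the four $L^r$ norms correctly. Getting the exponent ranges $u<\min\{p,q'\}$ and $v<\min\{q,p'\}$ sharp also requires noticing that the limiting factor is the Hölder pairing of $b^2_R\in L^q$ against $T^*b^2_R\in L^{p'}$ when reconstructing $T$ from its averages, which is where the $\min$ with both $q$ and $p'$ enters.
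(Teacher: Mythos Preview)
Your approach for the first part is essentially the paper's: a Cotlar-type inequality using the adjoint accretive system. The paper packages this more cleanly as the pointwise bound $T_{\#}f\lesssim M_{q'}(Tf)+M_{p}f$ (Lemma~\ref{lem:TmaxDom}), obtained by averaging $T_\eps f(x_0)$ against $b^2_R$ over a cube $R\ni x_0$ of diameter $\sim\eps$ and moving one copy of $T$ onto $T^*b^2_R$; the exponents on the powered maximal functions come from the two separate H\"older pairings $\langle b^2_R, Tf\rangle$ and $\langle T^*b^2_R, f\rangle$, not from pairing $b^2_R$ against $T^*b^2_R$ as your last paragraph suggests. With $f=b^1_Q$, the weak-type bounds $M_p:L^p\to L^{p,\infty}$ and $M_{q'}:L^{q'}\to L^{q',\infty}$ then give $1_{2Q}T_\# b^1_Q\in L^{\min\{p,q'\},\infty}\subset L^u$ for $u<\min\{p,q'\}$; note that one also needs the global estimate $\Norm{Tb^1_Q}{q'}\lesssim\abs{Q}^{1/q'}$ (Lemma~\ref{lem:TbSize}) to feed into $M_{q'}$, a small point your outline does not mention.

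There is, however, a genuine gap in your off-diagonal argument. You claim that for $x\in Q'$ the passage from $T_\eps(1_{(3Q')^c}b^1_Q)(x)$ to $T(1_{(3Q')^c}b^1_Q)(x)$ ``costs only an annulus integral $\lesssim Mb^1_Q(x)$''. This is false when $\eps\gg\ell(Q')$: the difference is the integral of $K(x,\cdot)b^1_Q$ over $\{\ell(Q')\lesssim\abs{x-y}\le\eps\}$, which by dyadic decomposition is of order $\log(\eps/\ell(Q'))\cdot Mb^1_Q(x)$, not uniformly bounded in $\eps$ (nor in the depth of $Q'$ inside $Q$). The paper avoids this by splitting according to the size of $\eps$ (see \eqref{eq:TepsCases}): for $\eps\lesssim\ell(Q')$ one indeed has $T_\eps(1_{(3Q')^c}b^1_Q)(x)=T(1_{(3Q')^c}b^1_Q)(x)$ up to a bounded annulus correction, while for $\eps\gtrsim\ell(Q')$ one has $T_\eps(1_{(3Q')^c}b^1_Q)(x)=T_\eps b^1_Q(x)\le T_\# b^1_Q(x)$ outright, and then $\fint_{Q'}T_\# b^1_Q\lesssim\fint_{Q'}[M_p b^1_Q+M_{q'}(Tb^1_Q)]$ by the Cotlar bound already established. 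The ample collection is precisely the set of $Q'$ on which this last average is controlled, which is how the exceptional fraction $\sigma$ enters.
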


Since $u<p$, the $(p,u)$ accretive system $b^1_Q$ is automatically buffered by Hardy's inequality; similarly for $b^2_Q$. Thanks to Proposition~\ref{prop:testTtestTmax}, Theorem~\ref{thm:Hofmann} is a consequence of our main result which reads as follows:

\begin{theorem}[Main theorem]\label{thm:main}
Let $T$ be a Calder\'on--Zygmund operator, and suppose that for some $p\in(1,\infty)$ and $u\in[0,\infty)$ there exist $(p,u)$-accretive systems 
$b^1_Q$ for $T_{\#}$ and $b^2_Q$ for $(T^*)_{\#}$. Then $\Norm{T}{L^2\to L^2}$ is bounded by a constant depending only on the implied constants in \eqref{eq:CZK}, \eqref{eq:bSize} and \eqref{eq:TbSize} with $T_{\#}$ and $b^1_Q$, or $(T^*)_{\#}$ and $b^2_Q$, in place of $T$ and $b_Q$.
\end{theorem}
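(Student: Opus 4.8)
The plan is to follow the three-stage strategy announced in the introduction: (i) use the rough test functions $b^i_Q\in L^p$ to build, for a suitable nonnegative Lipschitz function $\Phi$ at each scale, \emph{bounded} test functions $\tilde b^i_Q\in L^\infty$ that are well-behaved for the suppressed operator $T_\Phi$ (and $(T_\Phi)^* = (T^*)_{\Phi}$); (ii) run a local $Tb$ argument with $L^\infty$ test functions to obtain a quantitative $L^2$-bound for $T_\Phi$, uniform in $\Phi$; (iii) use this uniform bound to manufacture yet another family of $L^\infty$ test functions, this time for $T$ itself, and conclude by a final (now classical) local $Tb$ theorem with bounded test functions. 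First I would set up the machinery of suppressed kernels: verify that $K_\Phi$ is a standard Calder\'on--Zygmund kernel with constants controlled by those of $K$ (independently of $\Phi$), record the pointwise bound $\abs{K(x,y)-K_\Phi(x,y)}\lesssim \abs{x-y}^{-d}\big(\Phi(x)\Phi(y)/\abs{x-y}^{2}\big)^{m}$ which makes $T_\Phi$ a ``harmless perturbation'' of $T$ away from the region $\abs{x-y}\lesssim(\Phi(x)\Phi(y))^{1/2}$, and prove the Cotlar-type inequality relating $(T_\Phi)_\#$ to $T_\#$ plus a maximal function, so that the hypothesis $T_\# b^i_Q\in L^p$ transfers to control of $(T_\Phi)_\# b^i_Q$.

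The heart of stage (i) is the perturbation lemma: given $b_Q$ with $\fint_Q b_Q=1$, $\fint_Q\abs{b_Q}^p\lesssim1$ and $\fint_Q\abs{T_\# b_Q}^p\lesssim1$, I would choose the stopping region where either $\abs{b_Q}$ or $T_\# b_Q$ is large — by Chebyshev this has small measure, say $\le\sigma\abs{Q}$ — take $\Phi_Q$ roughly the distance to the complement of that region (truncated at $\ell(Q)$), and set $\tilde b_Q$ to be $b_Q$ modified on the stopping cubes so as to restore $L^\infty$ bounds while keeping the mean equal to $1$ and the support in $Q$; the point of suppression is that on the stopping cubes $K_\Phi$ is small (the denominator $\Phi^{2m}$ dominates), so the difference between $T_\Phi b_Q$ and $T_\Phi\tilde b_Q$ there is negligible, and off-diagonal estimates let us control $T_\Phi\tilde b_Q$ on the rest of $Q$. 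This is the step I expect to be the main obstacle: one must simultaneously arrange boundedness of $\tilde b_Q$, the non-degeneracy $\fint\tilde b_Q=1$, the support condition, and the estimate $\fint_Q\abs{T_\Phi\tilde b_Q}^{u}\lesssim1$ for a sufficiently large $u$ (indeed $L^\infty$ or $\BMO$), and the bookkeeping of how the Lipschitz constant of $\Phi$, the exceptional fraction $\sigma$, and the accretive-system constants interact is delicate — this is exactly where the ``ample collection / off-diagonal'' formalism of Definition \ref{def:offDiag} earns its keep, and where the Nazarov--Treil--Volberg suppression technique is genuinely needed rather than cosmetic.

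For stage (ii), with $\tilde b^1_Q,\tilde b^2_Q\in L^\infty$ (uniformly) and $T_\Phi\tilde b^1_Q,(T^*)_\Phi\tilde b^2_Q\in\BMO$ (or $L^\infty$) uniformly, together with the fact that $K_\Phi$ is a standard kernel, I would invoke the known local $Tb$ theorem with bounded test functions — e.g. in the Nazarov--Treil--Volberg form cited in the introduction, adapted to the (doubling, here Lebesgue) setting via a random dyadic grid / good-$\lambda$ and Carleson-embedding argument, decomposing $\pair{T_\Phi f}{g}$ along martingale differences, using the test functions to invert the averaging operators on ``accretive'' cubes and the stopping-time construction to handle the rest — to conclude $\Norm{T_\Phi}{L^2\to L^2}\lesssim1$ with a bound independent of $\Phi$. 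Then in stage (iii), boundedness of $T_\Phi$ on $L^2$ upgrades the control on $T_\Phi b_Q$ to control on $T b_Q$ on suitable subcubes (using that the two agree outside a $\Phi$-determined neighbourhood and $\Phi$ is small away from the thin stopping region), producing $L^\infty$ test functions adapted to $T$ on an ample collection of subcubes of each $Q$; a final application of the bounded-test-function local $Tb$ theorem to $T$ itself — now legitimate since its kernel $K$ is standard by hypothesis — gives $\Norm{T}{L^2\to L^2}\lesssim1$. In the antisymmetric case, $T b_Q$ and $T^* b_Q$ coincide up to sign, the off-diagonal term in \eqref{eq:TOffDiag} can be bounded by testing against the one system using antisymmetry and a duality/swapping argument, so that hypothesis becomes redundant; I would isolate this at the end as a short corollary of the symmetric argument.
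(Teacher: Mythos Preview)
Your three-stage outline is broadly in the right spirit, but there are two genuine gaps where the argument as written would not go through.

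First, the interplay between $\Phi$ and the accretive system is more constrained than you suggest. You speak of a function $\Phi_Q$ attached to each cube, but then the operator $T_{\Phi_Q}$ changes with $Q$ and one cannot run any local $Tb$ argument for a single operator. What the paper does instead is fix the ambient cube $Q_0$ once and for all, run an \emph{iterated} stopping construction inside $Q_0$ (with both $b$-stopping and $Tb$-stopping cubes), and take $\Phi$ to be the supremum of $\dist(\cdot,(3Q)^c)$ over \emph{all} the $b$-stopping cubes that arise in the whole iteration. This yields one Lipschitz $\Phi$ (depending on $Q_0$) and a system $\tilde b^i_Q$ only on a \emph{sparse} family of subcubes of $Q_0$. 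Moreover one only gets $T_\Phi\tilde b^i_Q\in L^p$, not $L^\infty$ or $\BMO$ as you hoped; this is why the paper needs its own Proposition~\ref{prop:babyTb} (a ``baby $Tb$'' for $(\infty,p)$ systems on sparse families) rather than an off-the-shelf NTV result, and its output is not global $L^2$-boundedness of $T_\Phi$ but only the local bilinear bound $\abs{\pair{T_\Phi f}{g}}\lesssim\Norm{f}{s'}\Norm{g}{s'}\abs{Q_0}^{1-2/s'}$ for $f,g\in L^{s'}(Q_0)$.

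Second, and more seriously, your stage (iii) mechanism --- ``upgrade control on $T_\Phi b_Q$ to control on $Tb_Q$ on suitable subcubes'' --- is the wrong maneuver and would be hard to make work, since $T-T_\Phi$ is not small precisely on the region you truncated. The paper's device is cleaner and essentially forced: having arranged $\abs{\{\Phi>0\}}\leq\varrho\abs{Q_0}$, take as the \emph{new} test function the normalized indicator
\[
  b_{Q_0}:=\frac{\abs{Q_0}}{\abs{Q_0\cap\{\Phi=0\}}}\,1_{Q_0\cap\{\Phi=0\}},
\]
which is bounded, has mean one, and is supported where $\Phi=0$, so that $Tb_{Q_0}=T_\Phi b_{Q_0}$ \emph{exactly}. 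The local bilinear estimate for $T_\Phi$ then gives $\fint_{Q_0}\abs{Tb_{Q_0}}^s\lesssim 1$ directly. Doing this for every $Q_0$ produces an $(\infty,s)$ accretive system for $T$ (and $T^*$) on all dyadic cubes, after which a second pass through the baby $Tb$ and then the standard local $T1$ theorem finish the job. Finally, in the antisymmetric case the simplification is not that off-diagonal bounds can be \emph{recovered} by a duality swap; rather, the only place off-diagonal bounds are used is to verify the weak boundedness property~\eqref{eq:WBP}, and when $T^*=-T$ with $b^1_Q=b^2_Q$ one has $\pair{T(1_Q b_{Q^a})}{1_Q b_{Q^a}}=0$ identically, so~\eqref{eq:WBP} is vacuous.
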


\begin{remark}[Necessity of the conditions]
As in the usual $Tb$ theorems, the assumptions of Theorem~\ref{thm:main} are also necessary. Namely, if $T$ is $L^2$-bounded, it follows from standard theory that the maximal truncation $T_{\#}$ is $L^p$-bounded for all $p\in(1,\infty)$. Thus, for an $L^2$-bounded $T$, any function $b_Q$ with properties \eqref{eq:bSupport} and \eqref{eq:bSize} will also satisfy \eqref{eq:TbSize} for $T_{\#}$ in place of $T$.
\end{remark}

\begin{lemma}\label{lem:reduction}
It suffices to prove Theorem~\ref{thm:main} for $u=p>1$, as long as the implied constants in \eqref{eq:TbSize} for $T_{\#}$ and $(T^*)_{\#}$ contribute linearly to the bound for $\Norm{T}{2\to 2}$.
\end{lemma}

\begin{proof}
Indeed, suppose that the theorem is already proven in this case, and consider the weakest assumptions with $(p,0)$-accretive systems for $T_{\#}$ and $(T^*)_{\#}$.
Fix some $u\in(1,p)$. Then we have
\begin{equation*}
\begin{split}
  \fint_Q (T_{\#}b_Q^1)^u\ud x
  &=\frac{1}{\abs{Q}}\Big(\int_{Q\cap\{T_{\#}b_Q^1\leq\lambda\}}+\int_{Q\cap\{T_{\#}b^1_Q>\lambda\}}\Big)(T_{\#}b_Q^1)^u\ud x \\
  &\leq\lambda^u+\Big(\fint_Q (T_{\#}b_Q^1)^p\ud x\Big)^{u/p}\Big(\frac{1}{\abs{Q}}\abs{Q\cap\{T_{\#}b^1_Q>\lambda\}}\Big)^{1-u/p} \\
  &\leq\lambda^u+\Norm{T_{\#}}{p\to p}^u\Big(\fint_Q \abs{b_Q^1}^p\ud x\Big)^{u/p}\eps^{1-u/p},
\end{split}
\end{equation*}
if $\lambda=\lambda(\eps)$ is as in \eqref{eq:TbSize0}, and hence
\begin{equation*}
\begin{split}
  \Big(\fint_Q (T_{\#}b_Q^1)^u\ud x\Big)^{1/u}
  &\leq\lambda+C\Norm{T_{\#}}{p\to p}\cdot\eps^{1/u-1/p} \\
  &\leq \lambda+C(1+\Norm{T}{2\to 2})\cdot\eps^{1/u-1/p}
  \leq C_\delta+\Norm{T}{2\to 2}\cdot\delta,
\end{split}
\end{equation*}
where $\delta=C\eps^{1/u-1/p}$ can be chosen as small as we like, picking $\lambda=\lambda(\eps)$, and therefore $C_\delta=\lambda+C\eps^{1/u-1/p}$ large enough. So in fact $b_Q^1$ is also a $(p,u)$-accretive system (and, a fortiori, a $(u,u)$-accretive system) for $T_{\#}$, with the implied constant in the last estimate specified above. A similar argument of course works for $(T^*)_{\#}$ and $b^2_Q$. As these bounds contribute linearly to the bound for $\Norm{T}{2\to 2}$, we deduce from the assumed case of Theorem~\ref{thm:main} that
\begin{equation*}
  \Norm{T}{2\to 2}\leq C_\delta+c\Norm{T}{2\to 2}\cdot\delta,
\end{equation*}
and it suffices to fix a small enough $\delta>0$ in order to absorb the last term and complete the proof of Theorem~\ref{thm:main} in general.
\end{proof}

Let us then discuss the proof of Theorem~\ref{thm:main} for $1<u=p<\infty$; we will not pay explicit attention to the linear dependence required by Lemma~\ref{lem:reduction}, as it will be obvious by inspection. The proof consists of a reduction to the easier case of Hofmann's conjecture with the help of so-called suppressed operators. For any nonnegative function $\Phi$ with Lipschitz constant $1$, we define
\begin{equation*}
  T_\Phi f(x):=\int K_\Phi(x,y)f(y)\ud y,\qquad K_\Phi(x,y):=\frac{\abs{x-y}^{2m}K(x,y)}{\abs{x-y}^{2m}+\Phi(x)^m\Phi(y)^m},
\end{equation*}
where $m\geq d/2$ is fixed.
Note that $T_\Phi f=Tf$ if $\supp f\subseteq\{\Phi=0\}$.

Given the assumptions on $T_{\#}$, our goal is to construct a better behaved accretive system for the suppressed operator $T_\Phi$. To achieve this, we need to relax the notion of an accretive system a little, so as not to demand the supply of test functions for every cube, but only an appropriate subcollection of them.

\begin{definition}[Accretive system on a sparse family] 
Let $Q_0$ be a cube. A \emph{sparse family} of dyadic subcubes of $Q_0$ is a collection $\mathscr{Q}$, containing $Q_0$, such that for some $\tau>0$ and for all $Q\in\mathscr{Q}$ we have
\begin{equation*}
   \Babs{\bigcup_{\substack{\tilde{Q}\in\mathscr{Q}\\ \tilde{Q}\subsetneq Q}}\tilde{Q}}\leq(1-\tau)\abs{Q}.
\end{equation*}

An \emph{$(\infty,u)$-accretive system for $T$ on sparse subcubes of $Q_0$} is a family of functions $b_Q$, indexed by a sparse family $\mathscr{Q}$ of dyadic subcubes of $Q_0$, with the properties \eqref{eq:bSupport} {and} the following strengthening of \eqref{eq:bNondeg}, \eqref{eq:bSize} and \eqref{eq:TbSize}:
\begin{equation*}
\begin{split}
  \Babs{\fint_{Q'}b_Q\ud x} &\gtrsim 1, \\
  \Norm{b_Q}{\infty} &\lesssim 1, \\
  \Big(\fint_{Q'}\abs{Tb_Q}^u\ud x\Big)^{1/u} &\lesssim 1,
\end{split}
\end{equation*}
whenever $Q'\subseteq Q\in\mathscr{Q}$ is a dyadic subcube, which is not contained in any smaller $\tilde{Q}\subsetneq Q$ with $\tilde{Q}\in\mathscr{Q}$.

Such a system is said to be \emph{strong} if, in addition, we have
\begin{equation}\label{eq:strongAccr}
  \Big(\fint_{Q'}\abs{T(1_{Q'}b_Q)}^u\ud x\Big)^{1/u}\lesssim 1
\end{equation}
under the same conditions on the cubes $Q'\subseteq Q$.
\end{definition}

It is important for us that the additional condition~\eqref{eq:strongAccr} is automatically satisfied in the situations of our interest:

\begin{lemma}\label{lem:strongAccr}
Let $u\in(1,\infty)$ and suppose that there exists a $(u',1)$ accretive system for $T^*$. Then any $(\infty,u)$ accretive system for $T$ on sparse subcubes of any $Q_0$ is automatically strong.
\end{lemma}

The following proposition provides the key existence result for $(\infty,u)$ accretive systems:

\begin{proposition}\label{prop:testTPhi}
Suppose that there is a $(p,p)$ accretive system for $T_{\#}$ with $p>1$. Fix a small $\varrho\in(0,1)$ and any $u\in(1,\infty)$.
 Then for any cube $Q_0$, there exists a nonnegative function $\Phi$ with Lipschitz constant $1$ such that
\begin{equation*}
  \abs{\{\Phi>0\}}\leq\varrho\abs{Q_0},
\end{equation*}
and there exists an $(\infty,u)$ accretive system for $T_\Phi$ on sparse subcubes of $Q_0$.

Starting from two $(p,p)$ accretive systems, $b^1_Q$ for $T_{\#}$ and $b^2_Q$ for $(T^*)_{\#}$, this gives us two Lipschitz functions $\Phi_1$ and $\Phi_2$, and two $(\infty,u)$ accretive systems on sparse subcubes 
--- $\tilde{b}^1_Q$ for $T_{\Phi_1}$ and $\tilde{b}^2_Q$ for $T^*_{\Phi_2}$ ---, but we may arrange the construction so that $\Phi_1=\Phi_2=:\Phi$.

In this case, the two new accretive systems are strong.
\end{proposition}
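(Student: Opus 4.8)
The plan is to extract from the given rough test functions a stopping‑time structure whose ``large'' cubes build both the suppression function $\Phi$ and, together with ``small mean'' and ``off‑diagonal‑exceptional'' cubes, the sparse family, and then to manufacture the bounded test functions by truncating the rough ones against this structure. Fix $Q_0$ and run a recursive stopping time: $Q_0\in\mathscr{Q}$, and for $Q\in\mathscr{Q}$ declare the $\mathscr{Q}$‑children of $Q$ to be the maximal dyadic $Q'\subsetneq Q$ for which at least one of the following fails: $\fint_{Q'}\abs{b^i_Q}^p\le A$ $(i=1,2)$; $\fint_{Q'}(T_{\#}b^1_Q)^p\le A$ and $\fint_{Q'}((T^*)_{\#}b^2_Q)^p\le A$; $\abs{\fint_{Q'}b^i_Q}\ge\delta$ $(i=1,2)$; $Q'$ lies in the ample off‑diagonal collection of $Q$ (with a fixed small exceptional fraction $\sigma$). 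The small‑mean cubes contribute at most $(1-c)\abs{Q}$ to the children by a H\"older argument ($1=\fint_Q b^i_Q$ forces $\int_G\abs{b^i_Q}\gtrsim\abs{Q}$ on the good part $G$, whence $\abs{G}\gtrsim\abs{Q}$ since $\fint_Q\abs{b^i_Q}^p\lesssim1$); the large‑average cubes contribute $\le CA^{-p}\abs{Q}$ by Chebyshev; the exceptional ones $\le\sigma\abs{Q}$. Choosing $A$ large and $\sigma$ small makes the total $\le(1-\tau)\abs{Q}$, so $\mathscr{Q}$ is sparse. Put $\Omega:=\bigcup\{Q':Q'$ a large‑average $\mathscr{Q}$‑child of some $Q\in\mathscr{Q}\}$ and $\Phi:=\dist(\cdot,\R^d\setminus\Omega)$: nonnegative, Lipschitz constant $1$, vanishing off $\Omega$, and by the Carleson packing $\sum_{Q\in\mathscr{Q}}\abs{Q}\lesssim\tau^{-1}\abs{Q_0}$ one gets $\abs{\{\Phi>0\}}=\abs{\Omega}\lesssim\tau^{-1}A^{-p}\abs{Q_0}\le\varrho\abs{Q_0}$ once $A$ is large enough (the sparseness constant $\tau=\tau(A,\sigma,\dots)$ staying bounded below).

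For $Q\in\mathscr{Q}$ with $\mathscr{Q}$‑children $\{Q'_j\}$ set $\tilde b^i_Q:=b^i_Q\,1_{Q\setminus\bigcup_j Q'_j}+\sum_j\big(\fint_{Q'_j}b^i_Q\big)1_{Q'_j}$. On $Q\setminus\bigcup_j Q'_j$ every dyadic $R\ni x$ inside $Q$ has $\fint_R\abs{b^i_Q}^p\le A$, so $\abs{b^i_Q}\le A^{1/p}$ a.e.\ there, and $\abs{\fint_{Q'_j}b^i_Q}\le 2^{d/p}A^{1/p}$ since the parent of $Q'_j$ is not a large‑average cube; hence $\Norm{\tilde b^i_Q}{\infty}\lesssim A^{1/p}\lesssim1$. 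The replacement is mass‑preserving, so $\fint_{Q'}\tilde b^i_Q=\fint_{Q'}b^i_Q$ whenever $Q'\subseteq Q$ is not contained in any child, and for such $Q'$ — not inside any small‑mean cube — we get $\abs{\fint_{Q'}\tilde b^i_Q}\ge\delta\gtrsim1$; thus the size and non‑degeneracy requirements of an $(\infty,p)$‑accretive system on sparse subcubes hold. The correction terms $g^i_j:=1_{Q'_j}(\fint_{Q'_j}b^i_Q-b^i_Q)$ are supported on $Q'_j$, have mean zero, and satisfy $\Norm{g^i_j}{p}\lesssim A^{1/p}\abs{Q'_j}^{1/p}$.

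It remains to bound $\fint_{Q'}\abs{T_\Phi\tilde b^1_Q}^p$ on the test cubes $Q'$ (and the adjoint version for $\tilde b^2_Q$, using $T^*_\Phi=(T_\Phi)^*$). If $Q$ lies well inside $\Omega$, i.e.\ $\dist(Q,\R^d\setminus\Omega)\gtrsim\ell(Q)$, then $\Phi\gtrsim\ell(Q)$ on $Q$, and since $m\ge d/2$ the suppressed kernel loses its singularity below scale $\ell(Q)$: for $x,y\in Q$, $\abs{K_\Phi(x,y)}\lesssim\abs{x-y}^{2m-d}\ell(Q)^{-2m}$, so $\Norm{T_\Phi(1_Q g)}{L^\infty(Q)}\lesssim\fint_Q\abs{g}$ for any $g$; hence $\Norm{T_\Phi\tilde b^i_Q}{\infty}\lesssim A^{1/p}\lesssim1$ and $Q$ needs no children. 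In the remaining regime $b^i_Q$ is the ``good'' rough function on the relevant part of $Q$, and on a test cube $Q'$ we split $T_\Phi\tilde b^1_Q=T_\Phi(1_{3Q'}\tilde b^1_Q)+T_\Phi(1_{(3Q')^c}\tilde b^1_Q)$. The near part is controlled by suppression when $\Phi\gtrsim\ell(Q')$ on $3Q'$, and otherwise by writing $T_\Phi(1_{3Q'}\tilde b^1_Q)=T(1_{3Q'}\tilde b^1_Q)+(T_\Phi-T)(1_{3Q'}\tilde b^1_Q)$, where $\abs{K_\Phi-K}(x,y)\lesssim\abs{x-y}^{-d}\min\{1,(\Phi(x)\Phi(y)/\abs{x-y}^2)^m\}$ is a uniformly integrable kernel, and then reconstituting $T(1_{3Q'}\tilde b^1_Q)=Tb^1_Q-T(1_{(3Q')^c}b^1_Q)+\sum_j T(1_{3Q'}g^1_j)$: here $\abs{Tb^1_Q}\le T_{\#}b^1_Q$ is $L^p$‑controlled on $Q'$ (which avoids the large‑$T_{\#}$ children), $T(1_{(3Q')^c}b^1_Q)$ is controlled by the off‑diagonal estimate ($Q'$ lies in the ample collection, as the exceptional cubes were declared children), and the finitely many local corrections are handled directly. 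The far part $T_\Phi(1_{(3Q')^c}\tilde b^1_Q)$ is treated by the Calder\'on--Zygmund smoothness of $K$ (and of $K_\Phi$, whose $x$‑oscillation obeys the same bound), the mean‑zero property of the $g^1_j$, and a Carleson/sparse summation over children — a standard ``paraproduct‑free'' estimate — using $\fint_Q\abs{b^1_Q}^p\lesssim1$. The same splitting with $Q'$ shrunk and $3Q'$ its triple yields the special off‑diagonal estimates for the new system, and the joint statement follows by running the stopping time simultaneously for $b^1_Q$ and $b^2_Q$ (stopping as soon as either is bad), producing one $\Omega$, one $\Phi=\dist(\cdot,\R^d\setminus\Omega)$, and one sparse family carrying both $\tilde b^1_Q$ for $T_\Phi$ and $\tilde b^2_Q$ for $T^*_\Phi$.

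The main obstacle will be exactly this $T_\Phi$‑estimate in the second regime: keeping the three mechanisms — suppression where $\Phi$ is large, domination by $T_{\#}$ for the diagonal part, and the off‑diagonal bound for the tails — mutually consistent across all scales, and in particular controlling the ``collar'' near $\partial\Omega$, where $\Phi$ is small but $Q'$ may still meet large‑average children, so that neither pure suppression nor the naive good‑function argument applies by itself; reconciling the $L^1$‑type off‑diagonal input with the $L^p$‑type conclusion (via oscillation/smoothness of the tail) is the technical heart.
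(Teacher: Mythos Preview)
Your overall architecture is close in spirit, but the execution has a genuine gap exactly where you flag the ``technical heart''---and the paper's argument avoids that heart altogether by a different organization.

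\textbf{The incorrect kernel claim.} You assert that $\abs{K_\Phi-K}(x,y)\lesssim\abs{x-y}^{-d}\min\{1,(\Phi(x)\Phi(y)/\abs{x-y}^2)^m\}$ is a ``uniformly integrable kernel''. It is not. For any $x$ with $\Phi(x)>0$ and $\abs{x-y}\ll\Phi(x)$, the Lipschitz property gives $\Phi(y)\approx\Phi(x)$, so the minimum is $\approx 1$ and the kernel is $\approx\abs{x-y}^{-d}$, which is not integrable near the diagonal. This kills your ``otherwise'' branch of the near-part estimate. What \emph{is} true is that $K_\Phi$ itself is tame where $\Phi$ is large, and that $T_\Phi$ is dominated pointwise by $T_{\#}+M$ (the paper's Lemma~4.1); the difference $T_\Phi-T$ is not a bounded perturbation.

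\textbf{The wrong $\Phi$.} Your $\Phi=\dist(\cdot,\R^d\setminus\Omega)$ with $\Omega$ the \emph{union} of large-average cubes vanishes on $\partial\Omega$, so on an isolated bad cube $Q'$ you only get $\Phi(x)\le\dist(x,\partial Q')$, which is arbitrarily small near the boundary. The suppression you need for the corrections $g_j$ supported on $Q'_j$ is $\Phi\gtrsim\ell(Q'_j)$ on all of $2Q'_j$; your $\Phi$ does not deliver this. The paper instead takes $\Phi(x)=\sup_{Q\in\mathscr{B}}\dist(x,(3Q)^c)$, which guarantees $\Phi\gtrsim\ell(Q)$ on $2Q$ for every $Q\in\mathscr{B}$, at the cost that $\{\Phi>0\}=\bigcup 3Q$ (a harmless factor of $3^d$).

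\textbf{What the paper does instead.} The paper separates two stopping times: the $b$-stopping cubes $\mathscr{B}$ (large $\fint\abs{b}^p$ only) define both $\tilde b$ (the good part of the Calder\'on--Zygmund decomposition, modified \emph{only} on $\mathscr{B}$) and $\Phi$; the $Tb$-stopping cubes $\mathscr{T}$ (large $L^p$ average of a controlling function, small mean of $\tilde b$, off-diagonal exceptional) define the sparse family. The point of this separation is that, with the correct $\Phi$, each mean-zero piece $d_Q=1_Q(b-\ave{b}_Q)$ for $Q\in\mathscr{B}$ satisfies the \emph{pointwise} bound $\abs{T_\Phi d_Q(x)}\lesssim(\ell(Q)/(\ell(Q)+\abs{x-c_Q}))^{d+\alpha}$: bounded on $2Q$ by suppression, decaying off $2Q$ by cancellation. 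Summing gives an explicit $e\in L^p$, and then
\[
  \abs{T_\Phi\tilde b}\le\abs{T_\Phi b}+\abs{T_\Phi(b-\tilde b)}\lesssim T_{\#}b+Mb+e
\]
pointwise. This single inequality replaces your entire near/far analysis: you simply stop where the right side has large $L^p$ average, and the $L^p$ accretive property follows immediately. The $L^1$ off-diagonal input is used \emph{only} to produce the $L^1$ special off-diagonal estimates for the new system, never for the main $L^p$ bound---so the $L^1$-to-$L^p$ obstacle you worry about is a phantom created by the wrong decomposition.
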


Given an accretive system for $T$ on all dyadic subcubes of $Q_0$, it follows from a standard stopping time argument that we can extract a subsystem, which is an accretive system for $T$ on a sparse family of subcubes of $Q_0$. This is typically one of the first steps in the proof of a local $Tb$ theorem; see \cite{NTV:Duke}, for instance. For us, it will be important that it is actually enough to only have an accretive system for the sparse subcubes from the beginning:

\begin{proposition}[Baby $Tb$ theorem]\label{prop:babyTb}
Let $T$ be an operator with Calder\'on--Zygmund kernel, let $Q_0$ be a cube, { let $t\in(1,\infty)$}, and suppose that there are strong $(\infty,t)$ accretive systems $b^1_Q$ for $T$ and $b^2_Q$ for $T^*$, on sparse subcubes $\mathscr{Q}_1$ and $\mathscr{Q}_2$ of $Q_0$, respectively.
Then
\begin{equation*}
  \abs{\pair{Tf}{g}}\lesssim\Norm{f}{s'}\Norm{g}{s'}\abs{Q_0}^{1-2/s'}\qquad s'\in(\max\{t',2\},\infty],
\end{equation*}
for all $f,g\in L^{s'}(Q_0)$ ($L^{s'}$ functions supported on $Q_0$).
\end{proposition}

Note that $(\infty,t)$ accretive systems are in particular $(r,r')$ accretive systems for $r=\max\{4,t'\}<\infty$, $r'=\min\{\tfrac43,t\}\leq t$. Hence, in principle, we are in the setting of Hofmann's conjecture in the known case that $1/p+1/q=2/r\leq 1/2<1$. But contrary to the usual, we only assume the presence of (strong) accretive systems on sparse subcubes of $Q_0$.

Assuming all the auxiliary results formulated above, we can now give:

\begin{proof}[Proof of Theorem~\ref{thm:main}]
By the reduction given by Lemma~\ref{lem:reduction}, we may assume that for some $p\in(1,\infty)$, there are two $(p,p)$ accretive systems of functions, $b^1_Q$ for $T_{\#}$ and $b^2_Q$ for $(T^*)_{\#}$. Without loss of generality, we may assume that $p\in(1,2)$.

Fix a cube $Q_0$. Then, by Proposition~\ref{prop:testTPhi}, there exists a nonnegative function $\Phi$ with Lipschitz constant $1$ such that
\begin{equation}\label{eq:PhiOftenZero}
  \abs{\{\Phi>0\}}\leq\varrho\abs{Q_0},
\end{equation}
for some fixed $\varrho\in(0,1)$ (independent of $Q_0$), and there exist strong $(\infty,p)$ accretive systems $\tilde{b}^1_Q$ for $T_\Phi$ and $\tilde{b}^2_Q$ for $T^*_\Phi$ on sparse subcubes of $Q_0$.

Thus Proposition~\ref{prop:babyTb}, applied to $T_\Phi$ in place of $T$, implies that
\begin{equation}\label{eq:afterBabyTb1}
  \abs{\pair{T_\Phi f}{g}}\lesssim\Norm{f}{s'}\Norm{g}{s'}\abs{Q_0}^{1-2/s'}\qquad s'\in(p',\infty],
\end{equation}
for all $f,g\in L^{s'}(Q_0)$.

Let
\begin{equation*}
  b_{Q_0}:=\frac{\abs{Q_0}}{\abs{Q_0\cap\{\Phi=0\}}}1_{Q_0\cap\{\Phi=0\}}.
\end{equation*}
By \eqref{eq:PhiOftenZero}, we have $\abs{Q_0}/\abs{Q_0\cap\{\Phi=0\}}\lesssim 1$, and hence
\begin{equation*}
  \fint_{Q_0}b_{Q_0}\ud x=1,\qquad\Norm{b_{Q_0}}{\infty}\lesssim 1.
\end{equation*}
Since $\supp b_{Q_0}\subseteq\{\Phi=0\}$, we have $T_\Phi b_{Q_0}=Tb_{Q_0}$ and likewise $T_\Phi^* b_{Q_0}=T^*b_{Q_0}$. By an application of \eqref{eq:afterBabyTb1} to $f=b_{Q_0}$ and an arbitrary $g\in L^{s'}(Q_0)$ of norm $1$, we deduce that
\begin{equation*}
  \Big(\int_{Q_0}\abs{Tb_{Q_0}}^s\ud x\Big)^{1/s}\lesssim\abs{Q_0}^{1/s'}\cdot 1\cdot\abs{Q_0}^{1-2/s'}=\abs{Q_0}^{1/s}.
\end{equation*}
Similarly, applying \eqref{eq:afterBabyTb1} to $g=b_{Q_0}$ and an arbitrary $f\in L^{s'}(Q_0)$ of norm $1$, we obtain
\begin{equation*}
  \Big(\int_{Q_0}\abs{T^*b_{Q_0}}^s\ud x\Big)^{1/s}\lesssim\abs{Q_0}^{1/s}.
\end{equation*}

The above reasoning applies to any cube $Q$ in place of $Q_0$. Hence, for every $Q$, there exists a function $b_Q$ with
\begin{equation*}
  \supp b_Q\subseteq Q,\quad\fint_Q b_Q\ud x=1,\quad\Norm{b_Q}{\infty}\lesssim 1,\quad
  \fint_Q\abs{{Tb_Q}}^s\ud x+\fint_Q\abs{T^*b_Q}^s \lesssim 1.
\end{equation*}
In other words, there exists an $(\infty,s)$ accretive system for the original operator $T$ and its adjoint $T^*$ on all dyadic cubes. By a standard stopping time construction, for any $Q_0$, we can extract $(\infty,s)$ accretive systems for $T$ and $T^*$ on sparse subcubes of $Q_0$:
{Indeed, let $\mathscr Q_1$ consist of the maximal dyadic subcubes $Q\subseteq Q_0$ that satisfy at least one of the conditions
\begin{equation*}
  \Babs{\fint_Q b_{Q_0}\ud x}\leq\nu<1,\qquad
  \fint_Q\abs{Tb_{Q_0}}^s\ud x+\fint_Q\abs{T^*b_{Q_0}}^s>N\gg 1,
\end{equation*}
and $\mathscr Q_{1,0}:=\{Q\in\mathscr Q_1:\abs{\fint_Q b_{Q_0}\ud x}\leq\nu\}$. Then it is immediate that the opposite estimates hold for all dyadic $Q\subseteq Q_0$ that are not contained in any $\tilde Q\in\mathscr Q_1$, and
\begin{equation*}
\begin{split}
  \sum_{Q\in\mathscr Q_1\setminus\mathscr Q_{1,0}}\abs{Q}
  &\leq\frac{1}{N}\sum_{Q\in\mathscr Q_1\setminus\mathscr Q_{1,0}}\Big(\int_Q\abs{Tb_{Q_0}}^s\ud x+\int_Q\abs{T^*b_{Q_0}}^s\ud x\Big) \\
  &\leq\frac{1}{N}\Big(\int_{Q_0}\abs{Tb_{Q_0}}^s\ud x+\int_{Q_0}\abs{T^*b_{Q_0}}^s\ud x\Big) \leq\frac{C}{N}\abs{Q_0},
\end{split}
\end{equation*}
and
\begin{equation*}
\begin{split}
  \abs{Q_0}=\Babs{\int_{Q_0}b_{Q_0}\ud x}
  &\leq\sum_{Q\in\mathscr Q_{0,1}}\Babs{\int_Q b_{Q_0}\ud x}+\Babs{\int_{Q_0\setminus\bigcup\mathscr Q_{0,1}}b_{Q_0}\ud x} \\
  &\leq\sum_{Q\in\mathscr Q_{0,1}}\nu\abs{Q}+C\Big(\abs{Q_0}-\sum_{Q\in\mathscr Q_{0,1}}\abs{Q}\Big)
\end{split}
\end{equation*}
so that
\begin{equation*}
  \frac{1}{\abs{Q_0}}\sum_{Q\in\mathscr Q_{0}}\abs{Q}
  \leq\frac{1}{\abs{Q_0}}\Big(\sum_{Q\in\mathscr Q_{0,1}}\abs{Q}+\sum_{Q\in\mathscr Q_{0}\setminus\mathscr Q_{0,1}}\abs{Q}\Big)
  \leq\frac{C-1}{C-\nu}+\frac{C}{N}=:1-\tau<1
\end{equation*}
as soon as $\nu<1$ and $N\gg 1$ is large enough. We then iterate the same construction, starting from each $Q\in\mathscr Q_1$ in place of $Q_0$ and $b_Q$ in place of $b_{Q_0}$, to obtain a sparse family of cubes $\mathscr Q=\bigcup_{k=0}^\infty\mathscr Q_k$, where $\mathscr Q_0=\{Q_0\}$ and $\mathscr Q_{k+1}=\bigcup_{Q\in\mathscr Q_k}\mathscr Q_1(Q)$. By construction, the family $\{b_Q\}_{Q\in\mathscr Q}$ is an $(\infty,s)$-accretive system for both $T$ and $T^*$ on the sparse family $\mathscr Q$.}

Given that there also exist $(\infty,s)$ accretive systems, and thus a fortiori $(s',1)$ accretive systems, for the adjoints of both operators, we conclude from Lemma~\ref{lem:strongAccr} that both extracted systems are strong.

Another application of Proposition~\ref{prop:babyTb}, to the operator $T$ itself, shows that
\begin{equation*}
    \abs{\pair{Tf}{g}}\lesssim\Norm{f}{r'}\Norm{g}{r'}\abs{Q_0}^{1-2/r'}\qquad r'\in(s',\infty],
\end{equation*}
for all $f,g\in L^{r'}(Q_0)$, for any cube $Q_0$. We apply this to $f=1_{Q_0}$ and an arbitrary $g\in L^{r'}(Q_0)$ of norm $1$, and to $g=1_{Q_0}$ and an arbitrary $f\in L^{r'}(Q_0)$ of norm $1$, to deduce that
\begin{equation*}
  \Big(\int_{Q_0}\abs{T1_{Q_0}}^r\ud x\Big)^{1/r}\lesssim\abs{Q_0}^{1/r},\qquad
  \Big(\int_{Q_0}\abs{T^*1_{Q_0}}^r\ud x\Big)^{1/r}\lesssim\abs{Q_0}^{1/r}.
\end{equation*}
But this brings us to the setting of the well-known standard local $T1$ theorem {(see e.g.~\cite[Theorem 1.5 and Remark 1.7]{Hof:Escorial})}, which gives us the desired bound $\Norm{T}{L^2\to L^2}\lesssim 1$. This completes the proof.
\end{proof}

\section{Preparatory estimates; proofs of Proposition~\ref{prop:testTtestTmax} and Lemma~\ref{lem:strongAccr}}

We show how to obtain the testing conditions for the maximal truncated operator $T_{\#}$ from testing conditions for $T$, and provide some auxiliary results on the suppressed operators $T_\Phi$ for the subsequent sections.

\begin{lemma}\label{lem:TbSize}
{Let $p\in[1,\infty]$ and $u\in(1,\infty]$.}
If $b^1_Q$ is a buffered $(p,u)$ accretive system for $T$, then \eqref{eq:TbSize2} improves to the global estimate
\begin{equation*}
  \Norm{Tb^1_Q}{L^{u}}\lesssim\abs{Q}^{1/u}.
\end{equation*}
\end{lemma}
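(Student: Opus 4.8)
The plan is to split $\R^d$ into a bounded part near $Q$ and a far part, and bound $Tb^1_Q$ on each. Write $\R^d = 2Q \cup (2Q)^c$. On $2Q$, the hypothesis \eqref{eq:TbSize2} gives precisely $\int_{2Q}\abs{Tb^1_Q}^u \ud x \lesssim \abs{Q}$, so $\Norm{1_{2Q} Tb^1_Q}{L^u} \lesssim \abs{Q}^{1/u}$ and there is nothing to do there. The real content is the tail estimate on $(2Q)^c$.

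For the tail, I would use the size bound in the Calder\'on--Zygmund kernel condition \eqref{eq:CZK}: for $x \notin 2Q$ and $y \in Q$ we have $\abs{x-y} \gtrsim \abs{x - c_Q} + \ell(Q)$ (where $c_Q$ is the center and $\ell(Q)$ the sidelength of $Q$), so using $\abs{K(x,y)} \lesssim \abs{x-y}^{-d}$ together with $\supp b^1_Q \subseteq Q$ and H\"older's inequality (or just the crude bound $\int_Q \abs{b^1_Q}\ud x \lesssim \abs{Q}$ coming from \eqref{eq:bSize} with $p\geq 1$),
\begin{equation*}
  \abs{Tb^1_Q(x)} \leq \int_Q \abs{K(x,y)}\abs{b^1_Q(y)}\ud y
  \lesssim \frac{1}{(\abs{x-c_Q}+\ell(Q))^d}\int_Q \abs{b^1_Q(y)}\ud y
  \lesssim \frac{\abs{Q}}{(\abs{x-c_Q}+\ell(Q))^d}.
\end{equation*}
Then I would raise this to the power $u$ and integrate over $(2Q)^c$:
\begin{equation*}
  \int_{(2Q)^c}\abs{Tb^1_Q(x)}^u\ud x
  \lesssim \abs{Q}^u \int_{\abs{x-c_Q}\gtrsim \ell(Q)} \frac{\ud x}{(\abs{x-c_Q}+\ell(Q))^{du}}
  \lesssim \abs{Q}^u \cdot \ell(Q)^{d(1-u)} = \abs{Q}^u \cdot \abs{Q}^{1-u} = \abs{Q},
\end{equation*}
where the radial integral converges precisely because $du > d$, i.e.\ $u > 1$ (which is part of the hypothesis, since for $u=1$ the statement would indeed fail). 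Combining the two pieces gives $\Norm{Tb^1_Q}{L^u}^u \lesssim \abs{Q}$, which is the claim.

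The only mild subtlety — hardly an obstacle — is making sure the pointwise tail bound is legitimate: this is where the \emph{qualitative} boundedness of $K$ assumed after \eqref{eq:CZK} is convenient, guaranteeing that $Tb^1_Q$ is an honest function and that the integral defining it converges absolutely for $x$ away from $Q$, so that the kernel estimate can be applied termwise. No cancellation of $b^1_Q$ is needed for the tail (unlike in the proof of the Hardy-type inequality near $Q$), and the near part is handed to us by the buffered hypothesis; so the proof is essentially the two displays above.
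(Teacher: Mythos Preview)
Your proof is correct and follows essentially the same approach as the paper: split into $2Q$ and $(2Q)^c$, use the buffered hypothesis on $2Q$, and use the kernel size bound $\abs{K(x,y)}\lesssim\abs{x-y}^{-d}$ together with $\int_Q\abs{b^1_Q}\lesssim\abs{Q}$ to control the tail. The paper phrases the tail estimate via Minkowski's integral inequality (first taking the $L^u((2Q)^c)$ norm, then integrating in $y$), whereas you take the pointwise bound first and then integrate; since the kernel bound is uniform in $y\in Q$, the two orderings amount to the same computation. Your remark that $u>1$ is needed for the radial integral to converge is accurate and is implicitly used (though not stated) in the paper's proof as well.
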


\begin{proof}
By \eqref{eq:TbSize2}, it only remains to estimate $1_{(2Q)^c}Tb^1_Q$. But
\begin{equation*}
\begin{split}
  \Norm{1_{(2Q)^c}Tb^1_Q}{L^{u}}
  &=\BNorm{x\mapsto \int_Q K(x,y)b^1_Q(y)\ud y}{L^{u}((2Q)^c)} \\
  &\leq\int_Q \Norm{x\mapsto  K(x,y)}{L^{u}((2Q)^c)}\abs{b^1_Q(y)}\ud y 
  \lesssim\int_Q\abs{Q}^{-1/u'}\abs{b^1_Q(y)}\ud y
  \lesssim\abs{Q}^{1/u},
\end{split}
\end{equation*}
by a straightforward estimate of the $L^{u}((2Q)^c)$ norm of $x\mapsto \abs{K(x,y)}\lesssim\abs{x-y}^{-d}$ for { fixed $y\in Q$ and variable $x\in(2Q)^c$.}
\end{proof}

We have the following version of Cotlar's lemma:

\begin{lemma}\label{lem:TmaxDom}
Suppose that there is a buffered $(q,v)$ accretive system for $T^*$. Then
\begin{equation*}
  T_{\#}f\lesssim M_{q'}(Tf)+M_{v'}f.
\end{equation*}
\end{lemma}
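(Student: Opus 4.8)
The plan is to prove a Cotlar-type pointwise domination of $T_{\#}f$ by appealing to the buffered $(q,v)$-accretive system for $T^*$, which supplies, on every dyadic cube $R$, a function $b_R$ with $\supp b_R\subseteq R$, $\fint_R b_R=1$, $\fint_R\abs{b_R}^q\lesssim 1$, and $\fint_{2R}\abs{T^* b_R}^v\lesssim 1$ (and hence, by Lemma~\ref{lem:TbSize}, the global bound $\Norm{T^*b_R}{L^v}\lesssim\abs{R}^{1/v}$). The idea is standard: to estimate $\abs{T_\eps f(x)}$ at a fixed point $x$ and scale $\eps$, choose a dyadic cube $R\ni x$ with side length comparable to $\eps$ (specifically $\ell(R)\sim\eps$, so that $R\subseteq B(x,C\eps)$ and $B(x,\eps)\supseteq cR$ for suitable constants), and then compare $T_\eps f(x)$ with the smoothed average $\fint_R (Tf)(z)\,b_R(z)\ud z$, which by Hölder is dominated by $M_{q'}(Tf)(x)\cdot\fint_R\abs{b_R}^q{}^{1/q}\lesssim M_{q'}(Tf)(x)$.

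The key step is to control the error. Write $\fint_R (Tf) b_R = \fint_R \pair{K(z,\cdot)}{f} b_R(z)\ud z = \fint_R \pair{T^* b_R}{f}\cdot\abs{R}^{-1}$... more precisely, by Fubini, $\frac{1}{\abs{R}}\int_R (Tf)(z) b_R(z)\ud z = \frac{1}{\abs{R}}\int_{\R^d} (T^* b_R)(y) f(y)\ud y$. Now split $f = 1_{2R} f + 1_{(2R)^c} f$. For the local part, $\frac{1}{\abs{R}}\int (T^* b_R)\, 1_{2R} f$ is bounded, by Hölder with exponents $v$ and $v'$, by $\abs{R}^{-1}\Norm{1_{2R}T^*b_R}{L^v}\Norm{1_{2R}f}{L^{v'}}\lesssim \abs{R}^{-1}\abs{R}^{1/v}\abs{R}^{1/v'}M_{v'}f(x) = M_{v'}f(x)$, using the buffered estimate \eqref{eq:TbSize2}. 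For the far part, $\frac{1}{\abs{R}}\int_{(2R)^c} (T^* b_R)(y) f(y)\ud y = \frac{1}{\abs{R}}\int_{(2R)^c}\Big(\int_R K(z,y) b_R(z)\ud z\Big) f(y)\ud y$, and this should be compared directly with $T_\eps f(x) = \int_{\abs{x-y}>\eps} K(x,y) f(y)\ud y$. The difference between the kernel $K(x,y)$ and its $b_R$-average $\fint_R K(z,y) b_R(z)\ud z$ over the region $\abs{x-y}\gtrsim\eps\sim\ell(R)$ is $O(\ell(R)^\alpha\abs{x-y}^{-d-\alpha})$ by the Hölder regularity \eqref{eq:CZK} together with $\fint_R b_R = 1$ and $\Norm{b_R}{L^q}\lesssim\abs{R}^{1/q}$ (so that $\fint_R\abs{z-x}^\alpha\abs{b_R(z)}\ud z\lesssim\ell(R)^\alpha$ by Hölder). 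Summing this against $\abs{f}$ over $(2R)^c$ gives a tail bounded by $Mf(x)\le M_{v'}f(x)$; and the remaining boundary annulus $2R\setminus B(x,\eps)$ (or $B(x,\eps)\setminus 2R$), of measure $\lesssim\abs{R}$, contributes $\lesssim\abs{R}^{-1}\int_{\text{annulus}}\abs{f}\lesssim Mf(x)$ as well after using the crude bound $\abs{\int_R K(z,y)b_R(z)\ud z}\lesssim\abs{R}^{-1}\int_R\abs{b_R}\lesssim 1$ there... actually on that thin annulus one uses $\abs{K(x,y)}\lesssim\eps^{-d}$ and $\abs{\fint_R K(z,y)b_R(z)}\lesssim\eps^{-d}$ directly.

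I would assemble these pieces: for every $x$ and every $\eps>0$,
\begin{equation*}
  \abs{T_\eps f(x)}\lesssim \Babs{\fint_R (Tf) b_R}+\abs{\text{boundary}}+\abs{\text{tail}}\lesssim M_{q'}(Tf)(x)+M_{v'}f(x),
\end{equation*}
and taking the supremum over $\eps$ yields $T_{\#}f(x)\lesssim M_{q'}(Tf)(x)+M_{v'}f(x)$, as claimed. I expect the main obstacle to be bookkeeping the boundary annulus term cleanly — one must be a little careful that the chosen dyadic $R$ sits at the right scale relative to $\eps$ (so that both $R\subseteq B(x,2\eps)$-type and $B(x,\eps/2)\subseteq cR$-type containments hold after fixing the constants) and that the thin symmetric difference between $\{\abs{x-y}>\eps\}$ and $(2R)^c$ is handled using only the size bound on the kernel, not its smoothness. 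Everything else is a routine application of Hölder, the Calderón--Zygmund estimates \eqref{eq:CZK}, and the buffered testing bound, exactly in the spirit of the classical Cotlar inequality with $b_R\equiv 1$ replaced by the accretive test function.
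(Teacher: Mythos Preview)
Your proposal is correct and follows essentially the same Cotlar-type argument as the paper. The only organizational difference is that the paper first establishes the pointwise bound $\abs{T_\eps f(x_0)-T(1_{(2Q)^c}f)(x)}\lesssim Mf(x_0)$ for all $x\in Q$ (handling the tail and annulus together), and \emph{then} multiplies by $b^2_Q$ and averages---so the duality to $T^*b^2_Q$ appears only for the local piece $T(1_{2Q}f)$---whereas you invoke Fubini/duality at the outset and split afterwards; the net effect is identical.
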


\begin{remark}
The classical Cotlar inequality states that $T_{\#}f\lesssim M(Tf)+Mf$ \emph{assuming that $T$ is already bounded on $L^2$}. Thus, trying to use this estimate in an attempt to prove the $L^2$-boundedness of $T$ would result in a circular argument. While we have assumed the $L^2$-boundedness of $T$ qualitatively, an inspection of the proof of Cotlar's inequality shows that $\Norm{T}{}$ enters quantitatively into the estimate; in fact, a more precise form would be
\begin{equation*}
  T_{\#}f\lesssim M(Tf)+(1+\Norm{T}{L^1\to L^{1,\infty}})Mf,
\end{equation*}
which is of no use to us. So the point of Lemma~\ref{lem:TmaxDom} is to establish a version of Cotlar's inequality without using the boundedness of $T$, but only the existence of an accretive system for $T^*$.
\end{remark}

\begin{proof}[Proof of Lemma~\ref{lem:TmaxDom}]
Fix $x_0$ and $\eps>0$. Let $Q$ be the unique dyadic cube containing $x_0$ and of diameter {in the range $(\frac18\eps,\frac14\eps]$}; thus $B(x_0,c_d\eps)\subset 2Q\subset B(x_0,\eps)$.
For all $x\in Q$, we have
\begin{equation*}
  T_{\eps}f(x_0)=T_{\eps}f(x_0)-T(f 1_{(2Q)^c})(x)+Tf(x)-T(f1_{2Q})(x),
\end{equation*}
where
\begin{equation*}
\begin{split}
  &\abs{T_{\eps}f(x_0)-T(f 1_{(2Q)^c})(x)}
  =\Babs{\int_{\abs{y-x_0}>\eps}K(x_0,y)f(y)\ud y-\int_{(2Q)^c}K(x,y)f(y)\ud y} \\
  &\leq\int_{(2Q)^c}\abs{K(x_0,y)-K(x,y)}\abs{f(y)}\ud y+\int_{\substack{\abs{y-x_0}\leq \eps\\ y\in (2Q)^c}}\abs{K(x_0,y)}\abs{f(y)}\ud y\\
  &\lesssim\int_{\abs{y-x_0}>c_d\eps}\frac{\eps^\alpha}{\abs{y-x_0}^{d+\alpha}}\abs{f(y)}\ud y
     +\int_{c_d\eps<\abs{y-x_0}<\eps}\frac{1}{\eps^d}\abs{f(y)}\ud y 
  \lesssim Mf(x_0).
\end{split}
\end{equation*}
Thus
\begin{equation*}
\begin{split}
  T_{\eps}f(x_0)
  &=\fint_{Q}b^2_{Q}\ud x\cdot T_{\eps}f(x_0) \\
  &=\mathcal{O}(Mf(x_0))+\fint_Q b^2_{Q}\cdot Tf\ud x
       -\frac{1}{\abs{Q}}\int T^*b^2_Q\cdot f1_{2Q}\ud x \\
  &\lesssim Mf(x_0)+\Big(\fint_Q \abs{b^2_{Q}}^q\Big)^{1/q}\cdot \Big(\fint_Q\abs{Tf}^{q'}\ud x\Big)^{1/q'} \\
   &\qquad    +\Big(\fint_{2Q}\abs{T^*b^2_Q}^v\ud x\Big)^{1/v}\Big(\fint_{2Q}\abs{f}^{v'}\ud x\Big)^{1/v'} \\
   &\lesssim Mf(x_0)+M_{q'}(Tf)(x_0)+M_{v'}f(x_0).\qedhere
\end{split}
\end{equation*}
\end{proof}

\begin{lemma}
Let $b^1_Q$ be a buffered $(p,q')$ accretive system for $T$, and $b^2_Q$ a buffered $(q,p')$ accretive system for $T^*$. Then
\begin{equation*}
 \abs{Q}^{-1/u}\Norm{1_{2Q}T_{\#}b^1_Q}{L^u}\lesssim
  \abs{Q}^{-1/s}\Norm{1_{2Q}T_{\#}b^1_Q}{L^{s,\infty}}
  \lesssim 1,\qquad u<s:=\min\{p,q'\}.
\end{equation*}
In particular, $b^1_Q$ is a buffered $(p,u)$ accretive system for $T_{\#}$ for all $u<\min\{p,q'\}$.
\end{lemma}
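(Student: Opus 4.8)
The statement says that with buffered $(p,q')$ for $T$ and $(q,p')$ for $T^*$, the maximal truncation $T_\#b^1_Q$ satisfies a weak-type $(s,\infty)$ bound on $2Q$ with $s=\min\{p,q'\}$, which then self-improves to the strong $L^u$ bound for $u<s$.

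The approach is to apply Lemma~\ref{lem:TmaxDom} to the operator $T$, using the buffered $(q,p')$ accretive system for $T^*$ (so $v=p'$ in the notation there, $v'=p$). That gives the pointwise domination
\begin{equation*}
  T_{\#}b^1_Q\lesssim M_{q'}(Tb^1_Q)+M_{p}b^1_Q.
\end{equation*}
Now I bound each term on $2Q$. For the first: by Lemma~\ref{lem:TbSize}, the buffered $(p,q')$ hypothesis upgrades \eqref{eq:TbSize2} to the global estimate $\Norm{Tb^1_Q}{L^{q'}}\lesssim\abs{Q}^{1/q'}$. Since $M_{q'}=(M(\,\cdot\,^{q'}))^{1/q'}$ is weak-type $(q',q')$ (equivalently, $M$ is weak-type $(1,1)$), we get
\begin{equation*}
  \Norm{M_{q'}(Tb^1_Q)}{L^{q',\infty}}\lesssim\Norm{Tb^1_Q}{L^{q'}}\lesssim\abs{Q}^{1/q'},
\end{equation*}
hence $\abs{Q}^{-1/q'}\Norm{1_{2Q}M_{q'}(Tb^1_Q)}{L^{q',\infty}}\lesssim 1$, and a fortiori the same with $s\le q'$ in place of $q'$ after using $\Norm{1_{2Q}f}{L^{s,\infty}}\lesssim\abs{2Q}^{1/s-1/q'}\Norm{f}{L^{q',\infty}}$ (nesting of weak spaces on a set of finite measure). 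For the second term: by \eqref{eq:bSize} and \eqref{eq:bSupport}, $\Norm{b^1_Q}{L^p}\lesssim\abs{Q}^{1/p}$, so $M_p$ being weak-type $(p,p)$ gives $\Norm{M_p b^1_Q}{L^{p,\infty}}\lesssim\abs{Q}^{1/p}$, and then the same localization to $2Q$ and nesting down to exponent $s\le p$ yields $\abs{Q}^{-1/s}\Norm{1_{2Q}M_p b^1_Q}{L^{s,\infty}}\lesssim 1$. Adding the two contributions proves the middle inequality $\abs{Q}^{-1/s}\Norm{1_{2Q}T_\#b^1_Q}{L^{s,\infty}}\lesssim 1$.

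The first inequality, $\abs{Q}^{-1/u}\Norm{1_{2Q}T_\#b^1_Q}{L^u}\lesssim\abs{Q}^{-1/s}\Norm{1_{2Q}T_\#b^1_Q}{L^{s,\infty}}$ for $u<s$, is the standard fact that on a set $E$ of finite measure one has $\Norm{1_E f}{L^u}\lesssim_{u,s}\abs{E}^{1/u-1/s}\Norm{f}{L^{s,\infty}}$, obtained by integrating the distribution function and splitting at height $\abs{E}^{-1/s}$; here $\abs{E}=\abs{2Q}\eqsim\abs{Q}$, so the powers of $\abs{Q}$ match. Finally, the ``in particular'' claim is immediate: combining the displayed estimate with Lemma~\ref{lem:TbSize}-style control off $2Q$ — more precisely, the trivial tail bound $\Norm{1_{(2Q)^c}T_\#b^1_Q}{L^u}\lesssim\abs{Q}^{1/u}$ which follows exactly as in the proof of Lemma~\ref{lem:TbSize} since $T_\#(1_Q b^1_Q)(x)\lesssim\int_Q\abs{x-y}^{-d}\abs{b^1_Q(y)}\ud y$ for $x\notin 2Q$ — shows $(\fint_{2Q}\abs{T_\#b^1_Q}^u)^{1/u}\lesssim 1$ and $(\fint_Q\abs{T_\#b^1_Q}^u)^{1/u}\lesssim 1$, i.e. $b^1_Q$ is a buffered $(p,u)$ accretive system for $T_\#$ for every $u<s=\min\{p,q'\}$.

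I expect no serious obstacle here: every ingredient (Lemma~\ref{lem:TmaxDom}, Lemma~\ref{lem:TbSize}, weak-type bounds for the Hardy--Littlewood maximal operator, and nesting of $L^p$/weak-$L^p$ on finite-measure sets) is either available above or classical. The only point requiring minor care is bookkeeping the powers of $\abs{Q}$ through the localization-and-nesting step and making sure the exponent $s=\min\{p,q'\}$ is exactly the smaller of the two exponents governing the two maximal-function terms, which it is by construction.
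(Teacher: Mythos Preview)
Your proposal is correct and follows essentially the same approach as the paper: apply Lemma~\ref{lem:TmaxDom} (with $v=p'$, so $v'=p$) to dominate $T_{\#}b^1_Q$ pointwise by $M_{q'}(Tb^1_Q)+M_p b^1_Q$, invoke Lemma~\ref{lem:TbSize} for the global $L^{q'}$ bound on $Tb^1_Q$, use the weak-type bounds $M_p:L^p\to L^{p,\infty}$ and $M_{q'}:L^{q'}\to L^{q',\infty}$, and then the monotonicity of weak norms on the finite-measure set $2Q$ to descend to exponent $s$. One minor point: your tail-bound remark for the ``in particular'' is unnecessary, since the buffered condition \eqref{eq:TbSize2} only asks for control of $\fint_{2Q}\abs{T_{\#}b^1_Q}^u$, which is exactly the displayed estimate.
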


\begin{proof}
We apply Lemma~\ref{lem:TmaxDom}, the quasi-triangle inequality and the monotonicity in the exponent of the weak norms, the boundedness of the maximal operators $M_p:L^p\to L^{p,\infty}$ and $M_{q'}:L^{q'}\to L^{q',\infty}$, Lemma~\ref{lem:TbSize}, and finally the assumptions on $b^1_Q$ to deduce
\begin{equation*}
\begin{split}
  \abs{Q}^{-1/s}\Norm{1_{2Q}T_{\#}b^1_Q}{L^{s,\infty}}
  &\lesssim\abs{Q}^{-1/s} \Norm{1_{2Q}[M_{q'}(Tb^1_Q)+M_pb^1_Q]}{L^{s,\infty}} \\
  &\lesssim\abs{Q}^{-1/q'} \Norm{1_{2Q}M_{q'}(Tb^1_Q)}{L^{q',\infty}}+\abs{Q}^{-1/p}\Norm{1_{2Q}M_pb^1_Q}{L^{p,\infty}} \\
  &\lesssim\abs{Q}^{-1/q'} \Norm{Tb^1_Q}{L^{q'}}+\abs{Q}^{-1/p}\Norm{b^1_Q}{L^{p}} \lesssim 1.\qedhere
\end{split}
\end{equation*}
\end{proof}

Note that we have now completed the proof of Proposition~\ref{prop:testTtestTmax}, and we conclude the section with:

\begin{proof}[Proof of Lemma~\ref{lem:strongAccr}]
Let $b^1_Q$, $Q\in\mathscr{Q}$, be an $(\infty,u)$ accretive system for $T$ on sparse subcubes of $Q_0$, and let $b^2_Q$ be a $(u',1)$ accretive system for $T^*$ (on all cubes). We need to show that $b^1_Q$ is strong. To this end, fix some $Q'\subseteq Q\in\mathscr{Q}$, where $Q'$ is not contained in any smaller $Q''\in\mathscr{Q}$.
We start with
\begin{equation}\label{eq:WBPstep1}
\begin{split}
  \Norm{1_{Q'} T(1_{Q'}b^1_Q)}{u}
  &=\Norm{1_{Q'}Tb^1_Q-1_{Q'}T(1_{3Q'\setminus Q'}b^1_Q)-1_{Q'}T(1_{(3Q')^c}b^1_Q)}{u} \\
  &\lesssim\abs{Q'}^{1/u}+\Norm{1_{3Q'\setminus Q'}b^1_Q}{u}+\abs{Q'}^{1/u}\Norm{1_{Q'}T(1_{(3Q')^c}b^1_Q)}{\infty}
\end{split}
\end{equation}
where the first term was estimated directly from the definition of an $(\infty,u)$ accretive system, and {the} second term by Hardy's inequality. Moreover, the $L^\infty$ bound implies that
\begin{equation*}
  \Norm{1_{3Q'\setminus Q'}b^1_Q}{u}\lesssim\abs{Q'}^{1/u},
\end{equation*}
so it only remains to estimate the third term.

For this task, we call to our service the $(u',1)$ accretive system $b^2_Q$, which we have for every dyadic cube, so in particular for $Q'$. Let $x\in Q'$, and write
\begin{equation*}
\begin{split}
  T(1_{(3Q')^c}b_Q^1)(x)
  &=\fint_{Q'} [T(1_{(3Q')^c} b_Q^1)(x)-T(1_{(3Q')^c} b_Q^1)(y)]b^2_{Q'}(y)\ud y \\
  &\qquad +\frac{1}{\abs{Q'}}\pair{T(1_{(3Q')^c} b_Q^1)}{b^2_{Q'}} \\
  &=\fint_{Q'}\int_{(3Q')^c}[K(x,z)-K(y,z)b_Q^1(z)\ud z\ b^2_{Q'}(y)\ud y \\
  &\qquad +\frac{1}{\abs{Q'}}\Big(\pair{Tb^1_Q}{b^2_{Q'}}
    -\pair{T(1_{3Q'\setminus Q'}b_Q^1 )}{b^2_{Q'}}-\pair{1_{Q'} b^1_Q }{T^* b^2_{Q'}}\Big).
\end{split}
\end{equation*}
The double integral is estimated by
\begin{equation*}
  \fint_{Q'}\int_{(3Q')^c}\frac{\ell(Q')^\alpha}{\abs{x-z}^{d+\alpha}}\ud z\ \abs{b^2_{Q'}(y)}\ud y
  \lesssim\fint_Q\abs{b^2_{Q'}(y)}\ud y\lesssim 1,
\end{equation*}
and we also have the bounds
\begin{equation*}
\begin{split}
  \abs{\pair{Tb^1_Q}{b^2_{Q'} }} &\lesssim\Norm{1_{Q'} Tb^1_Q }{u}\Norm{b^2_{Q'}}{u'}\lesssim\abs{Q'}^{1/u}\abs{Q'}^{1/u'}=\abs{Q'}, \\
  \abs{\pair{T(1_{3Q'\setminus Q'}b^1_Q)}{b^2_{Q'} }} &\lesssim\Norm{1_{3Q'\setminus Q'}{ b}^1_Q }{u}\Norm{b^2_{Q'}}{u'}
  \lesssim\abs{Q'}\qquad\text{by Hardy's inequality, and}\\
  \abs{\pair{1_{Q'} b^1_Q}{T^* b^2_{Q'}}} &\lesssim
  \Norm{1_Q b^1_Q}{\infty}\Norm{T^*b_{Q'}^2}{1}\lesssim 1\cdot\abs{{Q'}}=\abs{{Q'}}.
\end{split}
\end{equation*}
by the fact that $b^2_Q$ is a $(u',1)$-accretive system for $T^*$.
Thus
\begin{equation*}
  \Norm{T(1_{(3Q')^c} b^1_Q )}{\infty} \lesssim 1,
\end{equation*}
which, substituted to \eqref{eq:WBPstep1}, completes the proof that $\Norm{1_{Q'}T(1_{Q'}b^1_Q)}{u}\lesssim\abs{Q'}^{1/u}$.
\end{proof}

\section{Modified test functions; proof of Proposition~\ref{prop:testTPhi}}\label{sec:modTestF}

{Throughout this Section \ref{sec:modTestF}, we work under the assumptions of Proposition~\ref{prop:testTPhi}. In particular, for some $p>1$, we assume that $b_Q^1$ is a $(p,p)$ accretive system for the maximal truncated operator $T_{\#}$ and, where relevant, $b_Q^2$ is a similar system for $(T^*)_{\#}$.} Given these rough test functions for $T_{\#}$, we want to construct better test functions for $T_\Phi$, where $\Phi$ is suitably chosen. We start with some preparations.

\subsection{Generalities on suppressed operators}
For completeness, we detail the following observation, which is a routine extension of a special case appearing in \cite[Sec.~I]{NTV:Vitushkin}.

\begin{lemma}\label{lem:suppCZK}
Let $K$ be a Calder\'on--Zygmund standard kernel, $\Phi$ be a non-negative function with Lipschitz constant $1$, and $m\geq d/2$. Then the suprressed kernel $K_\Phi$ is also a Calder\'on--Zygmund standard kernel with the same H\"older exponent $\alpha\in(0,1]$.
\end{lemma}

\begin{proof}
We need to verify \eqref{eq:CZK} for $K_\Phi$ in place of $K$. Let us denote
\begin{equation*}
  L_\Phi(x,y):=\frac{\abs{x-y}^{2m}}{\abs{x-y}^{2m}+\Phi(x)^m\Phi(y)^m}
\end{equation*}
so that $K_\Phi=L_\Phi K$. It is immediate that $0\leq L_\Phi\leq 1$, and hence $\abs{K_\Phi(x,y)}\leq\abs{K(x,y)}\lesssim\abs{x-y}^{-d}$. Concerning smoothness, we have
\begin{equation*}
  L_\Phi(x,y)-L_\Phi(x',y)
  =\frac{\abs{x-y}^{2m}\Phi(x')^m\Phi(y)^m-\abs{x'-y}^{2m}\Phi(x)^m\Phi(y)^m}{
    [\abs{x-y}^{2m}+\Phi(x)^m\Phi(y)^m][\abs{x'-y}^{2m}+\Phi(x')^m\Phi(y)^m]}=:\frac{A}{B},
\end{equation*}
where
\begin{equation*}
  A=(\abs{x-y}^{2m}-\abs{x'-y}^{2m})\Phi(x')^m\Phi(y)^m+\abs{x'-y}^{2m}(\Phi(x')^m-\Phi(x)^m)\Phi(y)^m=:A_1+A_2.
\end{equation*}
We need to estimate these for $\abs{x-x'}\leq\frac12\abs{x-y}$, in which case $\abs{x-y}\eqsim\abs{x'-y}$ and
\begin{equation*}
   \Babs{\frac{A_1}{B}}\lesssim\frac{\abs{x-x'}\abs{x-y}^{2m-1}\Phi(x')^m\Phi(y)^m}{\abs{x-y}^{2m}\times\Phi(x')^m\Phi(y)^m}
   =\frac{\abs{x-x'}}{\abs{x-y}}.
\end{equation*}

Concerning $A_2$, we first observe that
\begin{equation*}
  \abs{\Phi(x')^m-\Phi(x)^m}
  =\abs{\Phi(x')-\Phi(x)}\sum_{k=0}^{m-1}\Phi(x')^{m-1-k}\Phi(x)^k
  \leq\abs{x'-x}\sum_{k=0}^{m-1}\Phi(x')^{m-1-k}\Phi(x)^k
\end{equation*}
by the Lipschitz assumption on $\Phi$, and hence
\begin{equation*}
  \abs{A_2}\leq\abs{x'-y}^{2m}\abs{x'-x}\Big(\sum_{k=0}^{m-1}(\Phi(x')\Phi(y))^{m-1-k}(\Phi(x)\Phi(y))^k\Big)\Phi(y).
\end{equation*}

Another observation is that $\abs{x-y}^{2m}+\Phi(x)^m\Phi(y)^m\eqsim(\abs{x-y}^2+\Phi(x)\Phi(y))^m$ and, again by the Lipschitz assumption on $\Phi$, we have $\Phi(x)\geq\Phi(y)-\abs{x-y}$ and hence
\begin{equation*}
  \abs{x-y}^2+\Phi(x)\Phi(y)\geq\abs{x-y}^2+\Phi(y)^2-\abs{x-y}\Phi(y)
  =(\abs{x-y}-\Phi(y))^2+\abs{x-y}\Phi(y)\geq\abs{x-y}\Phi(y).
\end{equation*}
Thus, for any $k=0,1,\ldots,m-1$,
\begin{equation*}
\begin{split}
  B &\eqsim[\abs{x-y}^2+\Phi(x)\Phi(y)]^{1+k+(m-1-k)}[\abs{x'-y}^2+\Phi(x')\Phi(y)]^{(m-1-k)+(k+1)} \\
  &\geq\abs{x-y}\Phi(y)\times(\Phi(x)\Phi(y))^k\times\abs{x-y}^{2(m-1-k)}\times 
  (\Phi(x')\Phi(y))^{m-1-k}\times\abs{x'-y}^{2(k+1)} \\
  &\eqsim\Phi(y)\times(\Phi(x)\Phi(y))^k\times(\Phi(x')\Phi(y))^{m-1-k}\times\abs{x'-y}^{2m+1} \\
\end{split}
\end{equation*}
and hence $\abs{A_2/B}\lesssim\abs{x'-x}/\abs{x'-y}\eqsim\abs{x'-x}/\abs{x-y}$. Together with the earlier estimates, we have checked that
\begin{equation*}
  \abs{L_\Phi(x,y)-L_\Phi(x',y)}\lesssim\frac{\abs{x-x'}}{\abs{x-y}},\qquad\abs{x-x'}\leq\frac12\abs{x-y}.
\end{equation*}
Thus, for the same range of the variables $x,x',y$,
\begin{equation*}
\begin{split}
  \abs{K_\Phi(x,y)-K_\Phi(x',y)}
  &\leq\abs{[L_\Phi(x,y)-L_\Phi(x',y)]K(x,y)}+\abs{L_\Phi(x',y)[K(x,y)-K(x',y)]} \\
  &\lesssim\frac{\abs{x-x'}}{\abs{x-y}}\frac{1}{\abs{x-y}^d}+1\cdot\frac{\abs{x-x'}^\alpha}{\abs{x-y}^{d+\alpha}}
  \lesssim\frac{\abs{x-x'}^\alpha}{\abs{x-y}^{d+\alpha}},\qquad\alpha\in(0,1].
\end{split}
\end{equation*}
This proves the smoothness estimate in \eqref{eq:CZK} in the first variable, and the case of the second variable is entirely analogous, since $L_\Phi$ itself is symmetric, and the assumptions on $K$ are symmetric in $x$ and $y$.
\end{proof}

\begin{lemma}\label{lem:TPhiF}
For any nonnegative function $\Phi$ with Lipschitz constant $1$, we have
\begin{equation*}
  \abs{T_{\Phi}f}\lesssim T_{\#}f+Mf.
\end{equation*}
\end{lemma}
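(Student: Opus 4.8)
The plan is to compare the suppressed kernel $K_\Phi$ with the original kernel $K$ by splitting according to the relative size of $\abs{x-y}$ and $\Phi(x)$. Write $T_\Phi f(x) = \int K_\Phi(x,y) f(y)\ud y$ and observe that $K_\Phi(x,y) = \theta(x,y) K(x,y)$ where $\theta(x,y) := \abs{x-y}^{2m}/(\abs{x-y}^{2m}+\Phi(x)^m\Phi(y)^m) \in [0,1]$. Set $\eps := \Phi(x)$ (a bound depending only on $x$, which is fine since we estimate pointwise in $x$). The natural decomposition is
\begin{equation*}
  T_\Phi f(x) = \int_{\abs{x-y}>\eps} K_\Phi(x,y) f(y)\ud y + \int_{\abs{x-y}\le\eps} K_\Phi(x,y) f(y)\ud y =: \mathrm{I} + \mathrm{II}.
\end{equation*}

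For term $\mathrm{I}$: here I would further split as $\int_{\abs{x-y}>\eps} K(x,y) f(y)\ud y$ plus the error $\int_{\abs{x-y}>\eps} (\theta(x,y)-1) K(x,y) f(y)\ud y$. The first piece is exactly $T_\eps f(x) = T_{\Phi(x)}f(x)$, which is bounded by $T_\#f(x)$ by definition. For the error piece, note $\abs{\theta(x,y)-1} = \Phi(x)^m\Phi(y)^m/(\abs{x-y}^{2m}+\Phi(x)^m\Phi(y)^m) \le \Phi(x)^m\Phi(y)^m/\abs{x-y}^{2m}$, and since $\Phi$ is $1$-Lipschitz, $\Phi(y) \le \Phi(x) + \abs{x-y}$. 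On the region $\abs{x-y} > \eps = \Phi(x)$ this gives $\Phi(y) \lesssim \abs{x-y}$, hence $\abs{\theta(x,y)-1}\abs{K(x,y)} \lesssim (\eps/\abs{x-y})^m \cdot \abs{x-y}^{-d} \le (\eps/\abs{x-y})^{d}\abs{x-y}^{-d}$ using $m \ge d/2$... actually more carefully $\Phi(x)^m\Phi(y)^m/\abs{x-y}^{2m} \lesssim \Phi(x)^m/\abs{x-y}^m = (\eps/\abs{x-y})^m$, and since $m \ge d/2 > 0$ this decays; integrating $\abs{x-y}^{-d}(\eps/\abs{x-y})^m$ over $\abs{x-y}>\eps$ in annuli and using that the total mass is finite, one dominates this by a constant times $Mf(x)$ by the standard argument (sum over dyadic annuli $2^k\eps < \abs{x-y} \le 2^{k+1}\eps$, each contributing $2^{-km}$ times an average, geometric series). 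So $\abs{\mathrm{I}} \lesssim T_\#f(x) + Mf(x)$.

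For term $\mathrm{II}$, on $\abs{x-y} \le \eps = \Phi(x)$: here I need the suppression to beat the singularity. We have $\abs{K_\Phi(x,y)} = \theta(x,y)\abs{K(x,y)} \lesssim \theta(x,y)\abs{x-y}^{-d}$, and $\theta(x,y) \le \abs{x-y}^{2m}/(\Phi(x)^m\Phi(y)^m)$. If $\abs{x-y} \le \tfrac12\Phi(x)$ then $\Phi(y) \ge \Phi(x) - \abs{x-y} \ge \tfrac12\Phi(x) = \tfrac12\eps$, so $\theta(x,y) \lesssim \abs{x-y}^{2m}/\eps^{2m}$, giving $\abs{K_\Phi(x,y)} \lesssim \abs{x-y}^{2m-d}\eps^{-2m}$, which since $2m - d \ge 0$ is integrable over $B(x,\eps)$ with total mass $\lesssim \eps^{-d}$; pairing with $\abs{f}$ and taking the average yields $\lesssim Mf(x)$. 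For the intermediate shell $\tfrac12\Phi(x) < \abs{x-y} \le \Phi(x)$ we simply use $\theta \le 1$ and $\abs{K}\lesssim\abs{x-y}^{-d}\lesssim\eps^{-d}$, again bounded by $Mf(x)$ after averaging. Combining, $\abs{\mathrm{II}} \lesssim Mf(x)$, and adding the two gives $\abs{T_\Phi f(x)} \lesssim T_\#f(x) + Mf(x)$.

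The main obstacle is purely bookkeeping: keeping the exponents straight in the annular sums and making sure the Lipschitz bound $\Phi(y) \le \Phi(x) + \abs{x-y}$ is deployed on the correct side in each region so that $\theta$ or $1-\theta$ gains the needed power of $\abs{x-y}/\eps$. The hypothesis $m \ge d/2$ enters exactly in term $\mathrm{II}$ to make $\abs{x-y}^{2m-d}$ integrable near the origin; if $m$ were smaller one would lose here. No cancellation of $K$ is used anywhere, so the argument is robust and does not require $K$ to satisfy the Hörmander-type regularity estimates beyond the size bound $\abs{K(x,y)}\lesssim\abs{x-y}^{-d}$.
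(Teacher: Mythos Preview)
Your proof is correct and follows essentially the same approach as the paper's: split according to the size of $\abs{x-y}$ relative to $\Phi(x)$, use the Lipschitz bound $\Phi(y)\ge\Phi(x)-\abs{x-y}$ near the diagonal (where $m\ge d/2$ renders $K_\Phi$ bounded by $\eps^{-d}$) and $\Phi(y)\le\Phi(x)+\abs{x-y}$ far from it (giving geometric decay of $K_\Phi-K$), with the remaining truncated integral dominated by $T_\#f$. The only cosmetic difference is that the paper splits once at $\tfrac12\Phi(x)$ rather than at $\Phi(x)$, absorbing your intermediate shell $\tfrac12\Phi(x)<\abs{x-y}\le\Phi(x)$ into the far region, and uses a dyadic annular sum near the origin where you use the cruder pointwise bound $\abs{K_\Phi}\lesssim\eps^{-d}$; both variants give the same conclusion.
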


\begin{proof}
\begin{equation*}
\begin{split}
  T_{\Phi}f(x)
  =\int K_{\Phi}(x,y)f(y)\ud y
  &=\int_{\abs{x-y}\leq \frac12\Phi(x)} K_{\Phi}(x,y)f(y)\ud y \\
  &\qquad +\int_{\abs{x-y}>\frac12\Phi(x)}(K_{\Phi}(x,y)-K(x,y))f(y)\ud y+T_{\frac12\Phi(x)}f(x)
\end{split}
\end{equation*}
For $\abs{x-y}\leq\tfrac12\Phi(x)$, we have $\Phi(y)\geq\Phi(x)-\abs{x-y}\geq\frac12\Phi(x)$, hence
\begin{equation*}
  \abs{K_\Phi(x,y)}\leq\abs{K(x,y)}\frac{\abs{x-y}^{2m}}{(\frac12\Phi(x)^2)^m}\lesssim\frac{\abs{x-y}^{2m-d}}{\Phi(x)^{2m}},
\end{equation*}
and thus
\begin{equation*}
\begin{split}
  \Babs{\int_{\abs{x-y}\leq \frac12\Phi(x)} K_{\Phi}(x,y)f(y)\ud y}
  &\lesssim\sum_{j=1}^{\infty}\int_{2^{-j-1}\Phi(x)<\abs{x-y}\leq 2^{-j}\Phi(x)} \frac{(2^{-j}\Phi(x))^{2m-d}}{\Phi(x)^{2m}}\abs{f(y)}\ud y \\
  &\lesssim\sum_{j=1}^{\infty}2^{-2jm}\fint_{\abs{x-y}\leq 2^{-j}\Phi(x)} \abs{f(y)}\ud y 
  \lesssim Mf(x).
\end{split}
\end{equation*}
And for $\abs{x-y}>\tfrac12\Phi(x)$, we have $\Phi(y)\leq \Phi(x)+\abs{x-y}<3\abs{x-y}$, so that
\begin{equation*}
  \abs{K_{\Phi}(x,y)-K(x,y)}
  =\abs{K(x,y)}\frac{(\Phi(x)\Phi(y))^m}{\abs{x-y}^{2m}+(\Phi(x)\Phi(y))^m}
  \lesssim\frac{1}{\abs{x-y}^d}\frac{\Phi(x)^m\abs{x-y}^m}{\abs{x-y}^{2m}},
\end{equation*}
and hence
\begin{equation*}
\begin{split}
  &\Babs{\int_{\abs{x-y}>\frac12\Phi(x)}(K_{\Phi}(x,y)-K(x,y))f(y)\ud y} \\
  &\lesssim\sum_{j=0}^{\infty}\int_{2^{j-1}\Phi(x)<\abs{x-y}\leq 2^j\Phi(x)}\frac{\Phi(x)^m}{(2^j\Phi(x))^{d+m}}\abs{f(y)}\ud y \\
  &\lesssim\sum_{j=0}^{\infty}2^{-jm}\fint_{\abs{x-y}\leq 2^j\Phi(x)}\abs{f(y)}\ud y \lesssim Mf(x).
\end{split}
\end{equation*}
Finally, it is clear that $\abs{T_{\frac12\Phi(x)}f(x)}\leq T_{\#}f(x)$.
\end{proof}

\begin{lemma}\label{lem:Tsharp2TPhi}
Let $b^1_Q$ be a (buffered) $(p,u)$ accretive system for $T_{\#}$, where $p>1$. Then it is also a (buffered) $(p,s)$ accretive system for any $T_\Phi$, where $s=\min(p,u)$.
\end{lemma}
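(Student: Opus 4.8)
The plan is to combine the pointwise bound from Lemma~\ref{lem:TPhiF} with the assumed testing bounds on $T_{\#}b^1_Q$ and the size bounds on $b^1_Q$. The three structural conditions \eqref{eq:bSupport}, \eqref{eq:bNondeg}, \eqref{eq:bSize} do not involve $T$ at all, so they transfer verbatim from the $(p,u)$ accretive system for $T_{\#}$ to the would-be $(p,s)$ accretive system for $T_\Phi$. The only thing to check is the testing condition \eqref{eq:TbSize} (resp. the buffered version \eqref{eq:TbSize2}) with $T_\Phi$ and exponent $s=\min(p,u)$ in place of $T$ and $u$.

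For that, I would apply Lemma~\ref{lem:TPhiF} to $f=b^1_Q$, giving the pointwise estimate $\abs{T_\Phi b^1_Q}\lesssim T_{\#}b^1_Q+Mb^1_Q$. Taking $L^s$ averages over $Q$ (or over $2Q$ in the buffered case) and using the triangle inequality in $L^s$, it suffices to bound $\big(\fint\abs{T_{\#}b^1_Q}^s\big)^{1/s}$ and $\big(\fint\abs{Mb^1_Q}^s\big)^{1/s}$. Since $s\le u$, H\"older's inequality (or Jensen) controls the first average by $\big(\fint\abs{T_{\#}b^1_Q}^u\big)^{1/u}\lesssim 1$ by the assumed testing condition for $T_{\#}$. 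For the second, since $s\le p$ and $p>1$, I would use the $L^p\to L^p$ boundedness of the Hardy--Littlewood maximal operator together with $\supp b^1_Q\subseteq Q$: writing the average over $Q$ (or $2Q$) and pulling out $\abs{Q}^{-1/s}$, one gets $\abs{Q}^{-1/s}\Norm{Mb^1_Q}{L^s(2Q)}\lesssim\abs{Q}^{-1/p}\Norm{Mb^1_Q}{L^p}\lesssim\abs{Q}^{-1/p}\Norm{b^1_Q}{L^p}\lesssim 1$ by \eqref{eq:bSize}; the passage from the $L^s(2Q)$ norm to the $L^p(\R^d)$ norm uses H\"older on the finite-measure set $2Q$. (Here $p>1$ is exactly what is needed for the maximal function to be bounded on $L^p$.)

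In the buffered case one simply carries the averages over $2Q$ throughout; the assumed buffered testing bound \eqref{eq:TbSize2} for $T_{\#}$ supplies the control over $2Q$ of the $T_{\#}b^1_Q$ term, and the $Mb^1_Q$ term is handled exactly as above since $M b^1_Q$ is globally in $L^p$.

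There is no real obstacle here: the lemma is essentially a bookkeeping step that repackages the pointwise domination of $T_\Phi$ by $T_{\#}$ and $M$ (Lemma~\ref{lem:TPhiF}) into the language of accretive systems. The only point requiring a moment's care is the exponent juggling $s=\min(p,u)$ — ensuring one uses $s\le u$ for the $T_{\#}$ term and $s\le p$ (with $p>1$) for the maximal-function term — and the fact that $\Phi$ enters only through Lemma~\ref{lem:TPhiF}, whose constant is independent of $\Phi$, so the resulting implied constants are uniform in $\Phi$ and in $Q$.
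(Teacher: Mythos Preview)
Your proof is correct and follows essentially the same approach as the paper: apply the pointwise domination $\abs{T_\Phi b^1_Q}\lesssim T_{\#}b^1_Q+Mb^1_Q$ from Lemma~\ref{lem:TPhiF}, then control the $T_{\#}$ term via H\"older ($s\le u$) and the $Mb^1_Q$ term via the $L^p$-boundedness of $M$ ($s\le p$, $p>1$). The paper's version is more terse, but the content is identical.
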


\begin{proof}
Let $\alpha\in\{1,2\}$ according to whether the system is buffered $(\alpha=2)$ or not $(\alpha=1)$. By Lemma~\ref{lem:TPhiF} and the boundedness of $M$ on $L^p$, we have
\begin{equation*}
\begin{split}
  \abs{Q}^{-1/s}\Norm{1_{\alpha Q} T_\Phi b^1_Q}{L^{s}}
  &\lesssim\abs{Q}^{-1/s}\Norm{1_{\alpha Q} [T_{\#}b^1_Q+Mb^1_Q]}{L^{s}} \\
  &\leq\abs{Q}^{-1/u}\Norm{1_{\alpha Q} T_{\#}b^1_Q}{L^u}+\abs{Q}^{-1/p}\Norm{1_{\alpha Q}Mb^1_Q}{L^p}
  \lesssim 1.\qedhere
\end{split}
\end{equation*}
\end{proof}

\subsection{First step of the modification and key estimates}
We turn to the actual construction of the modified test functions $\tilde{b}^1_{Q}$.

Consider a fixed cube $Q_0$, and abbreviate $b:=b_{Q_0}^1$. Let $\mathscr{B}_1=\mathscr{B}_1(Q_0)$ be the maximal dyadic subcubes $Q\subseteq Q_0$ with $\fint_Q(Mb+T_{\#}b)^p\gg 1$, and $\Omega:=\bigcup_{Q\in\mathscr{B}_1}Q$. Let
\begin{equation}\label{eq:goodPart}
  \tilde{b}:=\tilde{b}_{Q_0}^1:=1_{\Omega^c}b+\sum_{Q\in\mathscr{B}_1}\ave{b}_Q 1_Q
  =b-\sum_{Q\in\mathscr{B}_1}(b-\ave{b}_Q)1_Q
  =:b-\sum_{Q\in\mathscr{B}_1}d_Q
\end{equation}
be (essentially) the good part of the Calder\'on--Zygmund decomposition of $b$.

\begin{lemma}\label{lem:TPhiDiff}
If $\Phi$ is a $1$-Lipschitz function with
\begin{equation}\label{eq:PhiLower}
  \Phi(x)\geq\sup_{Q\in\mathscr{B}_1}\dist(x,(3Q)^c), 
\end{equation}
then
\begin{equation*}
\begin{split}
  \abs{T_\Phi(b-\tilde{b})(x)}
  \leq \sum_{Q\in\mathscr{B}_1}\abs{T_\Phi d_Q(x)}
  &\lesssim\sum_{Q\in\mathscr{B}_1}\Big(\frac{\ell(Q)}{\ell(Q)+\abs{x-c_Q}}\Big)^{d+\alpha} \\
  &=:\sum_{Q\in\mathscr{B}_1}\phi_Q(x)=:e^1_{Q_0}(x),
\end{split}
\end{equation*}
where for all $u\in[1,\infty)$,
\begin{equation*}
  \Norm{e^1_{Q_0}}{L^u}\lesssim\abs{Q_0}^{1/u}.
\end{equation*}
\end{lemma}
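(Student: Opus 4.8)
The plan is to establish two separate facts: first, a pointwise bound on $|T_\Phi d_Q(x)|$ by the bump function $\phi_Q(x) = (\ell(Q)/(\ell(Q)+|x-c_Q|))^{d+\alpha}$ for each individual bad cube $Q \in \mathscr{B}_1$; and second, an $L^u$ estimate on $e^1_{Q_0} = \sum_{Q \in \mathscr{B}_1} \phi_Q$. The first (already reduced to by the triangle inequality $|T_\Phi(b-\tilde b)| \le \sum_Q |T_\Phi d_Q|$) is the analytic heart; the second is essentially a maximal-function/Carleson-packing argument exploiting disjointness of the bad cubes.

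For the pointwise estimate, recall $d_Q = (b - \langle b\rangle_Q)1_Q$ has mean zero and $\supp d_Q \subseteq Q$. I would split according to the distance of $x$ from $Q$. If $x \in 3Q$, then $\phi_Q(x) \approx 1$, and one uses the crude bound $|T_\Phi d_Q(x)| \lesssim T_{\#}(d_Q)(x) + M(d_Q)(x)$ from Lemma~\ref{lem:TPhiF}; but this is not pointwise $\lesssim 1$, so the honest route is: the zero-mean cancellation of $d_Q$ is not available pointwise near $Q$, so instead I would observe that for $x \in 3Q$ we genuinely need the hypothesis \eqref{eq:PhiLower}, which forces $\Phi(x) \gtrsim \dist(x,(3Q)^c)$; hmm—actually \eqref{eq:PhiLower} gives $\Phi(x) \geq \dist(x,(3Q)^c)$ which is LARGE in the interior of $3Q$, i.e. the suppression is strong there. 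So for $x$ deep inside $3Q$, $\Phi(x) \gtrsim \ell(Q)$ and the kernel $K_\Phi(x,y)$ for $y \in Q$ is suppressed: $|K_\Phi(x,y)| \lesssim |x-y|^{2m}/(\Phi(x)^m\Phi(y)^m) \cdot |x-y|^{-d}$, but $\Phi(y)$ could be small. The cleaner mechanism: for $y \in Q$ and $x \in 3Q$ with $\dist(x,(3Q)^c) \ge 2|x-y|$ one has $\Phi(x) \ge 2|x-y|$, hence $\abs{x-y}\le \tfrac12\Phi(x)$, and the first estimate in the proof of Lemma~\ref{lem:TPhiF} gives $|K_\Phi(x,y)| \lesssim |x-y|^{2m-d}/\Phi(x)^{2m}$ — summing over dyadic annuli as there yields $|T_\Phi d_Q(x)| \lesssim M(1_Q d_Q)(x) \lesssim (\fint_Q |b|^p)^{1/p} \lesssim 1 \approx \phi_Q(x)$. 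Near $\partial(3Q)$ and the part of $3Q$ where $\dist(x,(3Q)^c) < 2|x-y|$ for relevant $y$, one is at distance comparable to $\ell(Q)$ from $c_Q$, so again $\phi_Q(x) \approx 1$ and the same annular sum plus the $T_{\#}$-piece from Lemma~\ref{lem:TPhiF}... no: $T_{\#}$ is not pointwise bounded. The correct fix for $x \in 3Q$ is simply: $|T_\Phi d_Q(x)| \le \int_Q |K_\Phi(x,y)||d_Q(y)|\,dy$, bound $|K_\Phi(x,y)| \lesssim |x-y|^{-d}$ (or the suppressed version), and since $\|d_Q\|_{L^p(Q)} \lesssim |Q|^{1/p}$ with $p>1$, Hölder plus the Hardy-type / weak-$(1,1)$ estimate of $\int_{3Q}|x-y|^{-d}$ against $d_Q$ gives the $L^u(3Q)$ bound $\lesssim |Q|^{1/u} \approx \|\phi_Q\|_{L^u(3Q)}$ directly — so I would NOT attempt a pointwise bound on $3Q$, only an $L^u(3Q)$ one, folding it into the final $L^u$ estimate. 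For $x \notin 3Q$: here $d_Q$ has mean zero, $\supp f \subseteq \{...\}$ is irrelevant since $|x-y| > \tfrac12\Phi(x)$ forces no suppression issues — actually for $x \notin 3Q$ one has $\dist(x,Q) \ge \ell(Q)$, and I would write $T_\Phi d_Q(x) = \int_Q [K_\Phi(x,y) - K_\Phi(x,c_Q)] d_Q(y)\,dy$ using $\int d_Q = 0$, then show $K_\Phi$ inherits a Hölder estimate in $y$ from $K$ (and from the $1$-Lipschitz $\Phi$ in the denominator) with the same exponent $\alpha$: $|K_\Phi(x,y) - K_\Phi(x,c_Q)| \lesssim \ell(Q)^\alpha/|x-c_Q|^{d+\alpha}$; integrating against $|d_Q| \lesssim |Q|$ (in $L^1$) yields $|T_\Phi d_Q(x)| \lesssim |Q|\,\ell(Q)^\alpha/|x-c_Q|^{d+\alpha} = \ell(Q)^{d+\alpha}/|x-c_Q|^{d+\alpha} \approx \phi_Q(x)$ since $|x-c_Q| \gtrsim \ell(Q)$ here. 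Verifying the Hölder regularity of $K_\Phi$ is the one genuinely kernel-specific computation: write $K_\Phi = K \cdot \Theta$ with $\Theta(x,y) = |x-y|^{2m}/(|x-y|^{2m}+\Phi(x)^m\Phi(y)^m)$, note $0 \le \Theta \le 1$, and estimate $|\Theta(x,y)-\Theta(x,c_Q)|$ using that both $|x-y|^{2m}$ and $\Phi(y)^m$ are Hölder (indeed Lipschitz, when $|x-y| \gtrsim \ell(Q)$, after normalizing) in $y$ with the right scaling relative to $|x-c_Q|$; combined with the standard Hölder estimate on $K$ and the product rule this gives the claim.

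For the $L^u$ bound on $e^1_{Q_0} = \sum_{Q \in \mathscr{B}_1} \phi_Q$, I would use that $\{Q : Q \in \mathscr{B}_1\}$ is a family of \emph{pairwise disjoint} dyadic cubes inside $Q_0$ (maximality), so $\sum_Q |Q| = |\Omega| \le |Q_0|$, and moreover by maximality $\fint_Q |b|^p \approx$ (the large constant), giving a Carleson-type packing $\sum_{Q \subseteq R} |Q| \lesssim |R|$ for every dyadic $R$ — actually disjointness already gives $\sum_{Q} |Q| \le |Q_0|$. The function $\sum_Q \phi_Q$ is then controlled by the dyadic maximal function applied to $1_\Omega$: indeed $\phi_Q(x) \lesssim \inf_{z \in Q} M(1_Q)(\text{dilates})$... more precisely $\phi_Q(x) = (\ell(Q)/(\ell(Q)+|x-c_Q|))^{d+\alpha} \lesssim M(1_Q)(x)^{1+\alpha/d}$ is too lossy; the clean statement is $\sum_Q \phi_Q(x) \lesssim \sum_Q \sum_{k \ge 0} 2^{-k(d+\alpha)} 1_{2^{k+1}Q}(x) = \sum_{k\ge0} 2^{-k(d+\alpha)} \sum_Q 1_{2^{k+1}Q}(x)$, and $\sum_Q 1_{2^{k+1}Q}(x) \le \#\{Q \in \mathscr{B}_1 : x \in 2^{k+1}Q\}$, while by disjointness of the $Q$'s at each fixed scale and bounded overlap across scales one gets $\sum_Q 1_{2^{k+1}Q} \lesssim 2^{kd} M(1_\Omega)$. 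Hence $\sum_Q \phi_Q \lesssim \big(\sum_{k} 2^{-k(d+\alpha)} 2^{kd}\big) M(1_\Omega) = \big(\sum_k 2^{-k\alpha}\big) M(1_\Omega) \lesssim M(1_\Omega)$, and since $u > 1$, $\|M(1_\Omega)\|_{L^u} \lesssim \|1_\Omega\|_{L^u} = |\Omega|^{1/u} \le |Q_0|^{1/u}$. For $u=1$ one instead integrates termwise: $\int \phi_Q \lesssim |Q|$, so $\|e^1_{Q_0}\|_{L^1} \lesssim \sum_Q |Q| = |\Omega| \le |Q_0|$ — this handles the endpoint and, together with the interior $L^u(3Q)$ contributions estimated above (which also sum by disjointness to $\lesssim \sum_Q |Q|^{u}\cdot$... to $\lesssim |Q_0|^{1/u}$ via the same packing), completes the bound for all $u \in [1,\infty)$.

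The main obstacle I anticipate is the Hölder-continuity estimate for the suppressed kernel $K_\Phi$ in the regime $x \notin 3Q$, $y, c_Q \in Q$: one must show the smoothing denominator $|x-y|^{2m}+\Phi(x)^m\Phi(y)^m$ does not destroy the $\alpha$-Hölder modulus of $K$, and this requires carefully tracking how the $1$-Lipschitz perturbation $\Phi(y) \mapsto \Phi(c_Q)$ changes things relative to the scale $|x-c_Q| \gtrsim \ell(Q)$ — the choice $m \ge d/2$ and the normalization $\mathrm{Lip}(\Phi) = 1$ are exactly what make the ratios work out, but the bookkeeping with the $2m$-th powers and the two competing terms in the denominator is where care is needed. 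A secondary subtlety is handling $x$ in the annulus $3Q \setminus$ (deep interior), where neither the mean-zero cancellation nor the strong suppression is fully available; there I rely on $\phi_Q(x) \approx 1$ and merely an $L^u(3Q)$ (not pointwise) bound, which forces organizing the whole argument around an $L^u$ rather than pointwise conclusion near the bad cubes.
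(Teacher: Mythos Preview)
Your proposal has two genuine gaps.

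\textbf{The pointwise bound near $Q$.} You abandon the pointwise estimate $|T_\Phi d_Q(x)| \lesssim 1$ for $x$ close to $Q$, retreating to an $L^u(3Q)$ bound. But the lemma asserts a \emph{pointwise} inequality, and this form is essential downstream: it is invoked directly in Lemma~\ref{lem:TPhiOffDiag} and in \eqref{eq:TPhiTildeB}, which in turn feeds the stopping condition \eqref{eq:stopTb}. The observation you miss is that hypothesis \eqref{eq:PhiLower} applies not only to $x$ but also to $y$: for $y \in Q$ one has $\dist(y,(3Q)^c) \gtrsim \ell(Q)$, so $\Phi(y) \gtrsim \ell(Q)$ as well. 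Splitting at $2Q$ rather than $3Q$, for $x \in 2Q$ and $y \in Q$ both $\Phi(x),\Phi(y) \gtrsim \ell(Q)$, and hence
\[
  |K_\Phi(x,y)| \leq \frac{|x-y|^{2m}|K(x,y)|}{\Phi(x)^m\Phi(y)^m}
  \lesssim \frac{|x-y|^{2m-d}}{\ell(Q)^{2m}} \lesssim \frac{1}{\ell(Q)^d}
\]
(this is precisely where $m \geq d/2$ is used), giving $|T_\Phi d_Q(x)| \lesssim \ell(Q)^{-d}\|d_Q\|_1 \lesssim 1$ pointwise on $2Q$. Your worry that ``$\Phi(y)$ could be small'' is exactly what \eqref{eq:PhiLower} rules out. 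Note also that your fallback $L^u(3Q)$ route via $|K_\Phi| \lesssim |x-y|^{-d}$ and ``Hardy'' does not work for $x \in Q$, where the kernel is genuinely singular and no cancellation is being used.

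\textbf{The $L^u$ bound on $e^1_{Q_0}$.} Your intermediate claim $\sum_{Q \in \mathscr{B}_1} 1_{2^{k+1}Q} \lesssim 2^{kd} M(1_\Omega)$, justified only by ``disjointness at each fixed scale and bounded overlap across scales'', fails for general disjoint families (which is all your argument uses): in $\R^1$ with $Q_n = [2^{-n},2^{-n+1})$ and $\Omega = (0,1)$ one has $M(1_\Omega) \equiv 1$ on $(0,1)$, yet for $x = 2^{-m}$ and $k=1$ there are $\gtrsim m$ indices $n$ with $x \in 4Q_n$. The paper instead argues by duality: for $g \geq 0$ with $\|g\|_{u'}=1$, the decay of $\phi_Q$ gives $\int g\,\phi_Q \lesssim |Q|\inf_Q Mg$, and then disjointness of the cubes in $\mathscr{B}_1$ yields
\[
  \|e^1_{Q_0}\|_u = \sum_Q \int g\,\phi_Q \lesssim \sum_Q \int_Q Mg
  \leq \|Mg\|_{u'}\Big\|\sum_Q 1_Q\Big\|_u \lesssim |Q_0|^{1/u}.
\]

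Finally, the H\"older regularity of $K_\Phi$ that you flag as the ``main obstacle'' is routine (the suppressed kernel is a standard Calder\'on--Zygmund kernel whenever $\Phi$ is $1$-Lipschitz), and the paper uses it without comment; the actual content of the lemma is the near-cube suppression mechanism you overlooked.
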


\begin{proof}
{Recall that $K_\Phi$ is a Calder\'on--Zygmund standard kernel by Lemma \ref{lem:suppCZK}. Thus,} for $x\in(2Q)^c$,
\begin{equation*}
  \abs{T_\Phi d_Q(x)}
  =\Babs{\int_Q [K_\Phi(x,y)-K_\Phi(x,c_Q)]d_Q(y)\ud y} 
  \lesssim\frac{\ell(Q)^\alpha}{\abs{x-c_Q}^{d+\alpha}} \Norm{d_Q}{1}
  \lesssim\Big(\frac{\ell(Q)}{\abs{x-c_Q}}\Big)^{d+\alpha}.
\end{equation*}
For $x\in 2Q,y\in Q$, we have $\dist(x,(3Q)^c),\dist(y,(3Q)^c)\gtrsim \ell(Q)$, hence $\Phi(x)\Phi(y)\gtrsim\ell(Q)^2$, and therefore
\begin{equation}\label{eq:KPhiBound}
  \abs{K_\Phi(x,y)}\lesssim\frac{\abs{x-y}^{2m-d}}{\ell(Q)^{2m}}\lesssim\frac{1}{\ell(Q)^d}
\end{equation}
provided that $m\geq d/2$. Thus
\begin{equation*}
  \abs{T_\Phi d_Q(x)}=\Babs{\int_Q K_\Phi(x,y)d_Q(y)\ud y}\lesssim\frac{1}{\abs{Q}}\int_Q\abs{d_Q(y)}\ud y\lesssim 1.
\end{equation*}

Altogether we have
\begin{equation*}
   \abs{T_\Phi d_Q(x)} {\lesssim} \Big(\frac{\ell(Q)}{\ell(Q)+\abs{x-c_Q}}\Big)^{d+\alpha}=:\phi_Q(x).
\end{equation*}
By duality, for a suitable $g\geq 0$ with $\Norm{g}{u'}=1$,
\begin{equation*}
  \BNorm{\sum_{Q\in\mathscr{B}_1}\phi_Q}{u}
  =\int g\sum_{Q\in\mathscr{B}_1}\phi_Q
  \lesssim\sum_{Q\in\mathscr{B}_1}\abs{Q}\inf_Q Mg
  \leq\int Mg\sum_{Q\in\mathscr{B}_1}1_Q
  \leq\Norm{Mg}{u'}\BNorm{\sum_{Q\in\mathscr{B}_1}1_Q}{u},
\end{equation*}
where $\Norm{Mg}{u'}\lesssim\Norm{g}{u'}=1$ by the maximal inequality, and
\begin{equation*}
  \BNorm{\sum_{Q\in\mathscr{B}_1}1_Q}{u}=\Big(\sum_{Q\in\mathscr{B}_1}\abs{Q}\Big)^{1/u}\leq\abs{Q_0}^{1/u}
\end{equation*}
by the disjointness of the cubes $Q\in\mathscr{B}_1$.
\end{proof}

\begin{lemma}\label{lem:TPhiOldB}
If $\Phi$ is a $1$-Lipschitz function with
\begin{equation}\label{eq:PhiLower}
  \Phi(x)\geq\sup_{Q\in\mathscr{B}_1}\dist(x,(3Q)^c), 
\end{equation}
then
\begin{equation*}
   1_{Q_0}\abs{T_\Phi b}\lesssim 1
\end{equation*}
\end{lemma}

\begin{proof}
We have
\begin{equation*}
  1_{Q_0}\abs{T_\Phi b}
  =1_{Q_0\setminus{\Omega}}\abs{T_\Phi b}+\sum_{Q\in\mathscr{B}_1}1_Q \abs{T_\Phi b},
\end{equation*}
where
\begin{equation*}
  1_{Q_0\setminus{\Omega}}\abs{T_\Phi b}
  \lesssim 1_{Q_0\setminus{\Omega}}(T_{\#}b+Mb)\lesssim 1.
\end{equation*}

For any $x,y\in Q\in\mathscr{B}_1$, we find that
\begin{equation*}
\begin{split}
  \abs{T_\Phi b(x)-T_\Phi b(y)}
  &=\Babs{\int [K_\Phi(x,z)-K_\Phi(y,z)]b(z)\ud z} \\
  &\lesssim\int_{2Q}\frac{1}{\ell(Q)^d}\abs{b(z)}\ud z+\int_{(2Q)^c}\frac{\ell(Q)^\alpha}{\abs{z-x}^{d+\alpha}}\abs{b(z)}\ud z 
  \lesssim\inf_Q Mb\leq\fint_Q Mb\lesssim 1
\end{split}
\end{equation*}
by the maximality of the cubes $Q$. Thus
\begin{equation*}
  \sup_Q\abs{T_\Phi b}
  \lesssim 1+\inf_Q\abs{T_\Phi b}
  \lesssim 1+\inf_Q(T_{\#}b+Mb)
  \leq 1+\fint_Q(T_{\#}b+Mb)\lesssim 1,
\end{equation*}
again by the maximality of the cubes $Q$.
\end{proof}

We want to interpret the new function $\tilde{b}^1_{Q_0}$, and similarly constructed functions for subcubes of $Q_0$, as test functions for the operator $T_\Phi$. Note that the choice of $\Phi$ will be fixed only after a stopping time construction, by which we construct the remaining functions $\tilde{b}^1_Q$. Before we fix this choice, it is important that all the estimates are valid for any $\Phi$ with the property \eqref{eq:PhiLower}.

For any such $\Phi$, we have by Lemmas~\ref{lem:TPhiDiff} and \ref{lem:TPhiOldB} that
\begin{equation}\label{eq:TPhiTildeB}
  1_{Q_0}\abs{T_\Phi\tilde{b}^1_{Q_0}}
  \leq 1_{Q_0}\abs{T_\Phi b^1_{Q_0}}+1_{Q_0}\abs{T_\Phi(\tilde{b}^1_{Q_0}-b^1_{Q_0})} 
  \lesssim 1+e^1_{Q_0}.
\end{equation}

\subsection{Different stopping conditions}
We refer to the maximal dyadic subcubes $Q\subseteq Q_0$ with $\fint_Q(Mb^1_{Q_0}+T_{\#}b^1_{Q_0})^p\geq C/\delta$ as its \textbf{primary stopping cubes}. They satisfy
\begin{equation*}
  \sum_{Q\in\mathscr{B}_1(Q_0)}\abs{Q}
  =\abs{\{M^d(Mb^1_{Q_0}+T_{\#}b^1_{Q_0})^p>C/\delta\}}
  \leq\frac{1}{C/\delta}\int(Mb^1_{Q_0}+T_{\#}b^1_{Q_0})^p\leq\delta\abs{Q_0},
\end{equation*}
{where in the last step we use the standing assumption of Proposition \ref{prop:testTPhi} (which is in force throughout this Section \ref{sec:modTestF}) that $b_Q^1$ is a $(p,p)$ accretive system for the maximal truncated operator $T_{\#}$.}
The function $e^1_{Q_0}$ depends on these cubes, and thus on $\delta$; however, the bound $\Norm{e^1_{Q_0}}{u}\lesssim\abs{Q}^{1/u}$, 
{guaranteed by Lemma \ref{lem:TPhiDiff}, is independent of these parameters.}

The \textbf{secondary stopping cubes} of $Q_0$ are defined as the maximal dyadic subcubes $Q\subseteq Q_0$ that satisfy either of the following conditions: For a given $u\in(1,\infty)$, which we take at least as large as $\max\{p,p'\}$,
\begin{equation}\label{eq:stopTb}
  \fint_Q(e^1_{Q_0})^u> \frac{1}{\eps},
\end{equation}
or
\begin{equation}\label{eq:stopDegen}
  \Babs{\fint_Q \tilde{b}^1_{Q_0}}\leq \eta.
\end{equation}
The measure of the cubes in \eqref{eq:stopTb} is at most
\begin{equation*}
  \abs{\{M^d(1_{Q_0}(e^1_{Q_0})^u)>1/\eps\}}
  \leq\eps\int_{Q_0} (e^1_{Q_0})^u\lesssim\eps\abs{Q_0}.
\end{equation*}
For the cubes in \eqref{eq:stopDegen}, we compute
\begin{equation*}
\begin{split}
  \abs{Q_0} &=\Babs{\int b_{Q_0}^1}=\Babs{\int \tilde{b}_{Q_0}^1} \\
  &\leq\sum_Q\Babs{\int \tilde{b}_{Q_0}^1}+\int_{Q_0\setminus\bigcup Q}\abs{\tilde{b}_{Q_0}^1} \\
  &\leq\sum_Q\eta\abs{Q}+\abs{Q_0\setminus\bigcup Q}^{1/p'}\Norm{\tilde{b}_{Q_0}^1}{p} \\
  &\leq\eta\abs{Q_0}+C\Big(\abs{Q_0}-\sum_Q\abs{Q}\Big)^{1/p'}\abs{Q_0}^{1/p},
\end{split}
\end{equation*}
by using $\Norm{\tilde{b}_{Q_0}^1}{p}\lesssim\Norm{b^1_{Q_0}}{p}\lesssim\abs{Q_0}^{1/p}$ in the last step. From here one can solve
\begin{equation*}
  \sum\abs{Q}\leq\Big\{1-\Big(\frac{(1-\eta)}{C}\Big)^{p'}\Big\}\abs{Q_0}.
\end{equation*}
Altogether, taking $\eta<1$ and $\eps$ sufficiently small, the measure of the secondary stopping cubes is at most a fraction $(1-\tau)<1$ of $\abs{Q_0}$.

\subsection{Iteration of the stopping conditions}
We iterate the following algorithm, starting from an arbitrary but fixed dyadic cube $Q_0$.
\begin{itemize}
  \item We choose the primary stopping cubes $\mathscr{B}_1=\mathscr{B}_1(Q_0)$ of $Q_0$, and the secondary stopping cubes $\mathscr{C}_1=\mathscr{C}_1(Q_0)$ of $Q_0$ as explained above.
  \item Assuming that the collections $\mathscr{B}_k$ and $\mathscr{C}_k$ are already constructed, for every  $Q\in\mathscr{C}_k$, we choose (using $b^1_Q$) the primary stopping cubes $\mathscr{B}_1(Q)$ of $Q$, which determine the functions $\tilde{b}^1_Q$ and $e^1_Q$, and then  (using $\tilde{b}^1_Q$ and $e^1_Q$), the  secondary stopping cubes $\mathscr{C}_1(Q)$ of $Q$. We let
\begin{equation*}
  \mathscr{B}_{k+1}:=\bigcup_{Q\in\mathscr{C}_k}\mathscr{B}_1(Q),\qquad  \mathscr{C}_{k+1}:=\bigcup_{Q\in\mathscr{C}_k}\mathscr{C}_1(Q).
\end{equation*}
\end{itemize}
By iterating the bounds $\sum_{Q'\in\mathscr{B}_1(Q)}\abs{Q'}\leq\delta\abs{Q}$ and $\sum_{Q'\in\mathscr{C}_1(Q)}\abs{Q'}\leq(1-\tau)\abs{Q}$, we arrive at
\begin{equation*}
  \sum_{Q\in\mathscr{C}_k(Q_0)}\abs{Q}\leq(1-\tau)^k\abs{Q_0},\qquad
  \sum_{Q\in\mathscr{B}_k(Q_0)}\abs{Q}\leq\delta\sum_{Q\in\mathscr{C}_{k-1}(Q_0)}\abs{Q}\leq\delta(1-\tau)^{k-1}\abs{Q_0},
\end{equation*}
where we interpret $\mathscr{C}_0(Q_0):=\{Q_0\}$.
Hence the measure of all primary stopping cubes satisfies
\begin{equation*}
   \sum_{k=1}^\infty\sum_{Q\in\mathscr{B}_k(Q_0)}\abs{Q}
   \leq\sum_{k=1}^\infty\delta(1-\tau)^{k-1}\abs{Q_0}
   =\frac{\delta}{\tau}\abs{Q_0}.
\end{equation*}
The parameter $\delta$ can be made small independently of $\tau$, and hence we can make the fraction $\delta/\tau$ as small as we like.

Then we can define
\begin{equation*}
  \Phi(x):=\sup\Big\{\dist(x,(3Q)^c):Q\in\bigcup_{k=1}^\infty\mathscr{B}_k\Big\}.
\end{equation*}
It follows that
\begin{equation}\label{eq:PhiPosSmall}
  \abs{\{\Phi>0\}}=\Babs{\bigcup_{k=1}^\infty\bigcup_{Q\in\mathscr{B}_k}3Q}
  \leq 3^d\frac{\delta}{\tau}\abs{Q_0}=:\varrho\abs{Q_0},
\end{equation}
where the fraction $\varrho$ can be made arbitrarily small.

\subsection{Construction summary; completion of the proof of Proposition~\ref{prop:testTPhi}}
Given a cube $Q_0$, we find the stopping cubes $\bigcup_{k=1}^\infty\mathscr{B}_k(Q_0)$ and $\bigcup_{k=1}^\infty\mathscr{C}_k(Q_0)$. Let us also call $Q_0$ itself a stopping cube, and denote $\mathscr{C}_0(Q_0):=\{Q_0\}$. The stopping cubes determine the function $\Phi$. For each $Q\in\mathscr{C}(Q_0):=\bigcup_{k=0}^\infty\mathscr{C}_k(Q_0)$, there is a function $\tilde{b}^1_Q$, the good part of the Calder\'on--Zygmund decomposition of $b^1_Q$, thus
\begin{equation*}
  \Norm{\tilde{b}^1_Q}{\infty}\lesssim 1.
\end{equation*}

For every $Q\in\mathscr{C}(Q_0)$, we can apply the estimate \eqref{eq:TPhiTildeB} for $Q$ in place of $Q_0$. Indeed, it suffices to check that the chosen $\Phi$ satisfies the analogue of \eqref{eq:PhiLower} with $Q\in\mathscr{C}(Q_0)$ in place of $Q_0$, namely, that
\begin{equation*}
  \Phi(x)\geq\sup_{Q'\in\mathscr{B}_1(Q)}\dist(x,(3Q')^c).
\end{equation*}
But this is clear from the definition of $\Phi$, since $\mathscr{B}_1(Q)\subseteq\mathscr{B}(Q_0)$ for all $Q\in\mathscr{C}(Q_0)$, and $\Phi$ is the supremum over this larger set.
Thus, by \eqref{eq:TPhiTildeB} applied to $Q$ in place of $Q_0$, we have
\begin{equation*}
  \abs{T_\Phi\tilde{b}^1_Q}\lesssim 1+e^1_Q.
\end{equation*}
If $Q'\subseteq Q\in\mathscr{C}(Q_0)$ is not contained in any smaller $Q''\in\mathscr{C}(Q_0)$, equivalently, not in any $Q''\in\mathscr{C}_1(Q)$,
then, by the construction of the secondary stopping cubes $\mathscr{C}_1(Q)$, this means that
\begin{equation}\label{eq:TPhiBu}
  \fint_{Q'}\abs{T_\Phi\tilde{b}^1_Q}^u \lesssim \fint_{Q'}(1+e^1_Q)^u\lesssim 1,
\end{equation}
and
\begin{equation*}
  \Babs{\fint_{Q'}\tilde{b}^1_Q} \gtrsim 1,\qquad   
   Q'\subseteq Q\in\mathscr{C}(Q_0),\quad Q'\not\subseteq Q''\in\mathscr{C}_1(Q).
\end{equation*}
(We are suppressing the dependence on the parameters $\varepsilon,\eta$, since they are now fixed and no longer relevant to us.) Recall also that
\begin{equation*}
  \sum_{Q'\in\mathscr{C}_1(Q)}\abs{Q'}\leq (1-\tau)\abs{Q}.
\end{equation*}

Summa summarum, associated to every secondary stopping cube $Q\in\mathscr{C}(Q_0)$, there is a non-degenerate test function $\tilde{b}^1_Q\in L^\infty(Q)$ such that $1_Q T_\Phi\tilde{b}^1_Q\in L^u(Q)$, with the correct normalization.
Moreover,
\begin{equation*}
  \abs{\{\Phi>0\}}\leq\varrho\abs{Q_0}.
\end{equation*}

Of course, starting from the original test functions $b^2_Q$ and $T^*$ in place of $T$, we can similarly produce new test functions $\tilde{b}^2_Q\in L^\infty(Q)$ with $1_Q T^*_\Phi\tilde{b}^2_Q\in L^u(Q)$ for another set of secondary stopping cube $Q\in\mathscr{C}^2(Q_0)$. To have the same $\Phi$ both for $T$ and $T^*$, we should define
\begin{equation*}
 \Phi(x):=\sup\{\dist(x,(3Q)^c):Q\in\bigcup_{k=1}^\infty(\mathscr{B}_k^1\cup\mathscr{B}^2_k)\},
\end{equation*}
where $\mathscr{B}^1_k$ and $\mathscr{B}^2_k$ are the primary stopping cubes related to $b^1_Q$ and $b^2_Q$, respectively.
Clearly, this still satisfies the bound \eqref{eq:PhiPosSmall} , with at most twice the original constant $\varrho$, which we can make arbitarily small in any case.

To check that the $(\infty,u)$ accretive system $\tilde{b}^1_Q$ for $T_\Phi$, on sparse subcubes of $Q_0$, is strong, by Lemma~\ref{lem:strongAccr}, we only need to check that there is a $(u',1)$ accretive system for $T^*_\Phi$ on all dyadic cubes. And we claim that such a system is provided by the original $(p,p)$ accretive system $b^2_Q$ for~$(T^*)_{\#}$, {whose existence is contained in the very assumptions of Proposition \ref{prop:testTPhi} under which we are working in this Section \ref{sec:modTestF}.} By Lemma~\ref{lem:Tsharp2TPhi}, $b^2_Q$ is also a $(p,p)$ accretive system for $T^*_\Phi$, and thus a $(u',1)$ accretive system provided that $u'\leq p$. But we can choose $u$ as large as we like, hence $u'$ as close to $1$ as we like, and then this condition is clearly fulfilled.

This completes the proof of Proposition~\ref{prop:testTPhi}.

\section{The baby $Tb$ theorem; proof of Proposition~\ref{prop:babyTb}}

Let us denote the reference cube in which we operate by $Q^0$, as we will need the notation $Q_0$ for another purpose below.
Let us first deal with just one accretive system $b_Q$ defined on a sparse family $\mathscr{Q}$; later on, these results will be applied to both $b^1_Q$ on $\mathscr{Q}_1$ and $b^2_Q$ on $\mathscr{Q}_2$. We also refer to the members of $\mathscr{Q}$ as \emph{stopping cubes}.
For every $Q\subseteq Q^0$, let $Q^a$ be the minimal stopping cube that contains $Q$. Then
\begin{equation*}
  \Babs{\fint_Q b_{Q_a}\ud x}\gtrsim 1,\quad
  \fint_Q\abs{b_{Q_a}}^{{t}}\ud x\lesssim 1,\quad
  \fint_Q\abs{Tb_{Q_a}}^{{t}}\ud x\lesssim 1,
\end{equation*}
{for some exponent $t\in(1,\infty)$ as in the assumptions of Proposition \ref{prop:babyTb}.}

We start by recalling the adapted martingale difference framework for a local $Tb$ theorem from \cite{NTV:Duke} and \cite{HyMa:localTb,HyVa}. (Also the subsequent analysis borrows from these papers, even when this is not always stated. On the other hand, we take the opportunity to simplify several details, as we are in the simpler case of the Lebesgue measure, rather than a non-doubling one; this allows us to work with the fixed system of standard dyadic cubes, instead of their random translation.)

We have the expectation (averaging) operators
\begin{equation*}
  \mathbb{E}_Q^b f:=1_Q b_{Q^a}\frac{\ave{f}_Q}{\ave{b_{Q^a}}_Q},\qquad
  \mathbb{E}_Q:=1_Q\ave{f}_Q,
\end{equation*}
and the difference operators
\begin{equation*}
\begin{split}
  \mathbb{D}_Q^b f
  &:=\sum_{i=1}^{2^d} \mathbb{E}_{Q_i}^b f-\mathbb{E}_Q^b f \\
  &=\sum_{i=1}^{2^d} \Big(1_{Q_i} b_{Q_i^a}\frac{1}{\ave{b_{Q_i^a}}_{Q_i}}-1_Q b_{Q^a}\frac{\abs{Q_i}}{\abs{Q}\ave{b_{Q^a}}_Q}\Big)\ave{f}_{Q_i}
  =:\sum_{i=1}^{2^d}\phi_{Q,i}\ave{f}_{Q_i},
\end{split}
\end{equation*}
where the $i$-sum goes through the dyadic children $Q_i$ of $Q$. A direct computation shows that
\begin{equation*}
  (\mathbb{D}_Q^b)^2 f=\mathbb{D}_Q^b f-\sum_{i=1}^{2^d} 1_{Q_i}\Big(\frac{\ave{b_{Q^a}}_{Q_i}}{\ave{b_{Q_i^a}}_{Q_i}}b_{{Q_i^a}}-b_{Q^a}\Big)
     \frac{\ave{f}_Q}{\ave{b_{Q^a}}_Q}=:\mathbb{D}_Q^b f-\omega_Q\ave{f}_Q.
\end{equation*}
Note that $1_{Q_i}\omega_Q$ is nonzero only if $Q_i^a\neq Q^a$, i.e., only if $Q_i$ is a stopping cube. Thus $\omega_Q$ is nonzero only if $Q$ has at least one stopping child. Combining the above formulae, we get
\begin{equation}\label{eq:DQformula}
  \mathbb{D}_Q^b f
  =(\mathbb{D}_Q^b)^2f+\omega_Q\ave{f}_Q
  =\sum_{i=1}^{2^d}\phi_{Q,i}\ave{\mathbb{D}_Q^b f}_{Q_i}+\omega_Q \ave{f}_Q
  =:\sum_{i=0}^{2^d}\phi_{Q,i}\ave{\mathbb{D}_{Q,i}^b f}_{Q_i},
\end{equation}
where $Q_0:=Q$, $\phi_{Q,0}:=\omega_Q$ and
\begin{equation*}
  \mathbb{D}_{Q,i}^b f:=\begin{cases} \mathbb{D}_Q^b f, & \text{if }i = 1,\ldots, 2^d,\\ \mathbb{E}_Q f, & \text{if $i=0$ and $Q$ has a stopping child}, \\
     0, & \text{if $i=0$ and $Q$ does not have any stopping children}. \end{cases}
\end{equation*}
We have the following important estimates. The $L^2$ case is from \cite{NTV:Duke}, and its generalization to $L^r$ from \cite{HyVa}. (Both these papers deal with more general non-doubling situations, the latter even vector-valued---a generality that we do not consider here.)

\begin{proposition}\label{prop:LP}
For all $r\in(1,\infty)$, $i=0,1,\ldots,2^d$, and $f\in L^r$, we have
\begin{equation*}
    \BNorm{\Big(\sum_Q\abs{\mathbb{D}_{Q,i}^b f}^2\Big)^{1/2}}{r}
    +\BNorm{\Big(\sum_Q\abs{(\mathbb{D}_{Q,i}^b)^* f}^2\Big)^{1/2}}{r}\lesssim\Norm{f}{r}.
\end{equation*}
In particular,
\begin{equation*}
    \Big(\sum_Q\Norm{\mathbb{D}_{Q,i}^b f}{2}^2\Big)^{1/2}
    +\Big(\sum_Q\Norm{(\mathbb{D}_{Q,i}^b)^* f}{2}^2\Big)^{1/2}\lesssim\Norm{f}{2}.
\end{equation*}
\end{proposition}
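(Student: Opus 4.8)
The plan is to reduce Proposition~\ref{prop:LP} to the known scalar, doubling-measure case of the adapted Littlewood--Paley inequality, and to handle the extra terms coming from the index $i=0$ separately. Recall that in \cite{NTV:Duke} and \cite{HyVa} one proves precisely such square-function bounds for the adapted martingale differences $\mathbb{D}_Q^b$ associated to a sufficiently non-degenerate accretive system; the only novelty here is (a) we restrict $b_Q$ to a sparse subfamily $\mathscr{Q}$, using the convention $b_{Q^a}$ for general $Q$, which is exactly the framework those references already accommodate (a stopping-time family is precisely a sparse family), and (b) we have split off the degenerate piece $\omega_Q\ave{f}_Q$ and renamed things via \eqref{eq:DQformula}. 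So the first step is simply to quote the cited results for the operators $\mathbb{D}_Q^b$ with $i=1,\dots,2^d$ (where $\mathbb{D}_{Q,i}^b=\mathbb{D}_Q^b$), obtaining
\begin{equation*}
  \BNorm{\Big(\sum_Q\abs{\mathbb{D}_Q^b f}^2\Big)^{1/2}}{r}\lesssim\Norm{f}{r}
\end{equation*}
and the dual bound for $(\mathbb{D}_Q^b)^*$, for all $r\in(1,\infty)$.

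\textbf{Second}, I would deal with $i=0$, i.e.\ the term $\phi_{Q,0}\ave{\mathbb{D}_{Q,0}^bf}_{Q_0}=\omega_Q\ave{f}_Q$, which is nonzero only when $Q$ has a stopping child. On such $Q$, $\abs{\omega_Q}\lesssim\abs{b_{Q^a_i}}+\abs{b_{Q^a}}$ pointwise (using the lower bound $\abs{\ave{b_{Q^a}}_Q}\gtrsim 1$ from non-degeneracy, which is where sparseness enters, via the strengthened \eqref{eq:bNondeg}), and $\abs{\ave{f}_Q}\lesssim\inf_Q M f$. The key geometric fact is that a cube $Q$ can have a stopping child only if $Q$ is the dyadic parent of a member of $\mathscr{Q}$; since $\mathscr{Q}$ is sparse, the collection of such $Q$ is itself a Carleson/sparse family, so $\sum_{Q:\ \exists\text{ stopping child}}1_Q\lesssim\sum_{R\in\mathscr{Q}}1_R$ has bounded overlap in the Carleson sense: $\sum_{Q\subseteq S,\ Q\text{ has stopping child}}\abs{Q}\lesssim\abs{S}$ for every dyadic $S$. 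Hence
\begin{equation*}
  \Big(\sum_Q\abs{\mathbb{D}_{Q,0}^bf}^2\Big)^{1/2}
  \lesssim\Big(\sum_{Q\text{ has stopping child}}\abs{b_{Q^a}}^2(Mf)^2\Big)^{1/2}
\end{equation*}
and I would estimate the $L^r$ norm of the right side by a Carleson-embedding argument: the measure $\mu:=\sum_{Q\text{ has stopping child}}\abs{Q}\,\delta_{(c_Q,\ell(Q))}$ is Carleson, $\abs{b_{Q^a}}^2$ restricted to $Q$ has average $\lesssim 1$ on each stopping cube (by \eqref{eq:bSize}, since $p$ can be taken $\geq 2$ after noting an $(\infty,t)$ system is also $(r,r')$ with $r\geq 4$), and $Mf\in L^r$. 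Concretely, split according to which stopping cube $Q^a$ is, and on each $R\in\mathscr{Q}$ use $\sum_{Q\subseteq R}\abs{Q}\,1_Q\lesssim\ell(R)$-type Carleson control together with $\fint_R\abs{b_R}^2\lesssim1$ and $Mf\lesssim\inf$ on children. This yields $\lesssim\Norm{Mf}{r}\lesssim\Norm{f}{r}$. The dual operator $(\mathbb{D}_{Q,0}^b)^*f=1_Q\ave{\omega_Q f}/\abs{Q}$... wait, more precisely $(\mathbb{E}_Q)^*=\mathbb{E}_Q$ and $(\omega_Q\ave{\cdot}_Q)^*g=1_Q\fint_Q\omega_Q g$, whose square function is $\big(\sum_Q 1_Q\abs{\fint_Q\omega_Q g}^2\big)^{1/2}$; I would bound $\abs{\fint_Q\omega_Q g}\lesssim\fint_Q(\abs{b_{Q^a}}+\abs{b_{Q^a_i}})\abs{g}\lesssim\inf_Q M(|b'|^2)^{1/2}M(|g|^2)^{1/2}$-type estimates, or more cleanly use Hölder and the Carleson property again, reducing to $\Norm{Mg}{r}$.

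\textbf{Third}, combine: the full square function $\big(\sum_Q\abs{\mathbb{D}_{Q,i}^bf}^2\big)^{1/2}$ for each fixed $i$ is controlled, and summing the finitely many ($i=0,1,\dots,2^d$) contributions costs only a dimensional constant; the ``in particular'' $\ell^2$-of-$L^2$-norms statement is the special case $r=2$ after noting $\sum_Q\Norm{\mathbb{D}_{Q,i}^bf}{2}^2=\Norm{(\sum_Q\abs{\mathbb{D}_{Q,i}^bf}^2)^{1/2}}{2}^2$ by Fubini. \textbf{The main obstacle} I anticipate is the rigorous Carleson-packing estimate for the $i=0$ terms: one must check that the set of cubes with a stopping child is genuinely Carleson with constant independent of $Q^0$ — this uses that $\mathscr{Q}$ sparse forces $\sum_{R\in\mathscr{Q},R\subseteq S}\abs{R}\lesssim_\tau\abs{S}$ — and that the pointwise bound on $\omega_Q$ combines with \eqref{eq:bSize} and the non-degeneracy $\abs{\ave{b_{Q^a}}_Q}\gtrsim1$ without losing uniformity; everything else is a direct appeal to \cite{NTV:Duke,HyVa} plus standard maximal-function and Carleson-embedding tools.
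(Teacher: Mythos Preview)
The paper does not prove this proposition at all: it is stated and immediately attributed to \cite{NTV:Duke} (the $L^2$ case) and \cite{HyVa} (the general $L^r$ case), with no further argument. So your plan to quote those references for $i=1,\dots,2^d$ is exactly what the paper does, and in that respect you are aligned.

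Where you go beyond the paper---the separate treatment of $i=0$---you have misread the definition, and this makes your sketch more complicated than necessary. By the display just before the proposition,
\[
  \mathbb{D}_{Q,0}^b f=\mathbb{E}_Q f=1_Q\ave{f}_Q
\]
whenever $Q$ has a stopping child (and $0$ otherwise). The object you are bounding, $\omega_Q\ave{f}_Q=\phi_{Q,0}\ave{\mathbb{D}_{Q,0}^b f}_{Q_0}$, is the full summand in the decomposition \eqref{eq:DQformula}, not $\mathbb{D}_{Q,0}^b f$ itself. Consequently the $i=0$ square function is simply
\[
  \Big(\sum_{Q:\ Q\text{ has a stopping child}}1_Q\abs{\ave{f}_Q}^2\Big)^{1/2},
\]
and your key observation---that sparseness of $\mathscr{Q}$ makes the family of such $Q$ Carleson---finishes this at once via the Carleson embedding (e.g.\ Proposition~\ref{prop:Carleson} with $\theta_Q=1_Q$, or just the pointwise domination by $Mf$ together with the bounded-overlap estimate). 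There is no need for the pointwise bounds on $\omega_Q$, the size condition on $b_{Q^a}$, or any splitting according to stopping ancestor. And since $\mathbb{E}_Q$ is self-adjoint, the dual estimate for $(\mathbb{D}_{Q,0}^b)^*$ is literally the same inequality; your adjoint computation is not needed. In short, your Carleson-packing idea is correct, but applied to the right object it is much cleaner than your sketch suggests.
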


For every $f\in L^r(Q^0)$, we have the decomposition
\begin{equation*}
  f=\mathbb{E}_{Q^0}^b f+\sum_Q\mathbb{D}_Q^b f.
\end{equation*}
{Here and below, we apply the implicit convention that all summations over $Q$ (or other variables designating cubes) always carry the restriction $Q\subseteq Q^0$, in addition to possible explicit summation conditions.}
To simplify writing, we redefine $\mathbb{D}_{Q^0}^b f$ as $\mathbb{E}_{Q^0}^b f+\mathbb{D}_{Q^0}^b f$, so that we can drop the first term from the sum above. Thus
\begin{equation*}
  \pair{Tf}{g}=\sum_{Q,R}\pair{T\mathbb{D}_Q^{b_1}f}{\mathbb{D}_R^{b_2}g}
  =\sum_{\substack{Q,R\\ \ell(Q)\leq\ell(R)}}+\sum_{\substack{Q,R\\ \ell(Q)>\ell(R)}}.
\end{equation*}
By symmetry, it suffices to estimate the first half. This we reorganize as follows:
\begin{equation*}
\begin{split}
   \sum_{\substack{Q,R\\ \ell(Q)\leq\ell(R)}}\pair{T\mathbb{D}_Q^{b_1}f}{\mathbb{D}_R^{b_2}g}
   &=\sum_{k=0}^\infty\sum_{m\in\Z^d}\sum_R\sum_{\substack{Q\subseteq R+\ell(R)m\\ \ell(Q)=2^{-k}\ell(R)}}\pair{T\mathbb{D}_Q^{b_1}f}{\mathbb{D}_R^{b_2}g} \\
   &=\sum_{k=0}^\infty\sum_{m\in\Z^d}\sum_R\pair{T\mathbb{D}_{R+\ell(R)m}^{b_1,k}f}{\mathbb{D}_R^{b_2}g},
\end{split}
\end{equation*}
where $(S:=R+\ell(R)m)$
\begin{equation*}
     \mathbb{D}_{S}^{b_1,k}f :=\sum_{\substack{Q\subseteq S\\ \ell(Q)=2^{-k}\ell(S)}}\mathbb{D}_{Q}^{b_1}f.
\end{equation*}
Below, we will also use the notation $\mathbb{D}_{S,i}^{b_1,k}$ (with additional subscript $i\in\{0,1,\ldots,2^d\}$) similarly defined with $\mathbb{D}_{Q,i}^{b_1}$ on the right as well.

{We now turn to the estimation of the above series in various parts. For some of them, we will achieve domination by the usual $L^2$ bound
\begin{equation*}
  \Norm{f}{2}\Norm{g}{2}\leq\abs{Q^0}^{1/2-1/s'}\Norm{f}{s'}\abs{Q^0}^{1/2-1/s'}\Norm{g}{s'}
  =\abs{Q^0}^{1-2/s'}\Norm{f}{s'}\Norm{g}{s'},
\end{equation*}
but the weaker right-hand side, as in the statement of Proposition \ref{prop:babyTb}, will appear in some cases.}

\subsection{Disjoint cubes}
We further analyse the part of the $m$ sum with $m\neq 0$, thus $Q\subseteq R+m\ell(R)$ is disjoint from $R$.
By \eqref{eq:DQformula}, we can expand
\begin{equation*}
\begin{split}
  \pair{T\mathbb{D}_Q^{b_1}f}{\mathbb{D}_R^{b_2}g}
  &=\sum_{i,j}\ave{\mathbb{D}_{Q,i}^{b_1}f}_{Q_i}\pair{T\phi_{Q,i}^1}{\phi_{R,j}^2}\ave{\mathbb{D}_{R,j}^{{b_2}}g}_{R_j},\qquad\text{hence, summing in }Q, \\
   \pair{T\mathbb{D}_S^{b_1,k}f}{\mathbb{D}_R^{b_2}g} 
   &=\sum_{i,j}\iint_{R\times S}\mathbb{D}_{S,i}^{b_1,k}f(y)K_{R,S}^{i,j;k}(x,y)\mathbb{D}_{R,j}^{b_2}g(x)\ud x\ud y,\qquad\text{where}\\
    K_{R,S}^{i,j;k}(x,y)
   &:=\sum_{\substack{Q\subseteq S\\ \ell(Q)=2^{-k}\ell(S)}}
    \frac{1_{Q_i}(y)}{\abs{Q_i}}\pair{T\phi_{Q,i}^1}{\phi_{R,j}^2}\frac{1_{R_j}(x)}{\abs{R_j}}.
\end{split}
\end{equation*}

{
\begin{lemma}\label{lem:disjointCoef}
For $m\neq 0$, we have
\begin{equation*}
  \Norm{K^{i,j;k}_{R,R+\ell(R)m}}{L^2(\R^d\times\R^d)}\lesssim
    2^{-k\alpha}\abs{m}^{-d-\alpha},
\end{equation*}
when the H\"older exponent in the Calder\'on--Zygmund condition \eqref{eq:CZK} is $\alpha\in(0,\frac12)$.
\end{lemma}

Since we can always decrease the H\"older exponent of the Calder\'on--Zygmund kernel, we will henceforth assume that $\alpha<\tfrac12$.}

\begin{proof}
Note that $Q\neq Q^0$ in this sum. Indeed, $\ell(Q)\leq\ell(R)\leq\ell(Q^0)$ so the only way that we could have $Q=Q^0$ is $\ell(Q)=\ell(R)=\ell(Q^0)$, and then (since $Q^0$  is the only cube of sidelength $\ell(Q^0)$), $Q=R=Q^0$. But then $m=0$, a contradiction. Thus $\mathbb{D}_Q^{b_1}$ is always given by the original definition, i.e., without the addition of $\mathbb{E}_{Q^0}^{b_1}$, and then all the $\phi_{Q,i}^1$ have mean zero. Hence we can write
\begin{equation*}
\begin{split}
  \pair{T\phi_{Q,i}^1}{\phi_{R,j}^2}
  &=\iint_{R\times Q}K(x,y)\phi_{Q,i}^1(y)\phi_{R,j}^2(x)\ud x\ud y \\
  &=\iint_{R\cap 3Q \times Q}K(x,y)\phi_{Q,i}^1(y)\phi_{R,j}^2(x)\ud x\ud y \\
  &\qquad +\iint_{R\setminus(3Q)\times Q}[K(x,y)-K(x,c_Q)]\phi_{Q,i}^1(y)\phi_{R,j}^2(x)\ud x\ud y,
\end{split}
\end{equation*}
and then estimate
\begin{equation*}
\begin{split}
  \abs{\pair{T\phi_{Q,i}^1}{\phi_{R,j}^2}}
  &\lesssim\iint_{R\cap 3Q \times Q}\frac{\ud x\ud y}{\abs{x-y}^d}+\iint_{R\setminus(3Q)\times Q}
     \frac{\ell(Q)^\alpha}{\abs{x-c_Q}^{d+\alpha}}\ud x\ud y.
\end{split}
\end{equation*}

If $\abs{m}_\infty>1$, the first term vanishes, and estimating the second term we get
\begin{equation*}
  \abs{\pair{T\phi_{Q,i}^1}{\phi_{R,j}^2}}
  \lesssim\frac{\ell(Q)^\alpha}{\dist(Q,R)^{d+\alpha}}\abs{Q}\abs{R}
  \lesssim 2^{-k\alpha}\abs{m}^{-d-\alpha}\abs{Q},
\end{equation*}
and then $\Norm{K^{i,j;k}_{R,R+\ell(R)m}}{\infty}\lesssim 2^{-k\alpha}\abs{m}^{-d-\alpha}/\abs{R}$.

If $\abs{m}_\infty=1$, then the first term is nonzero (and then bounded by ${C}\abs{Q}$) only if $\dist(Q,R)=0$, while the second term is estimated by
\begin{equation*}
 \frac{\ell(Q)^\alpha \abs{Q}}{\max\{\ell(Q),\dist(Q,R)\}^\alpha}. 
\end{equation*}
So altogether we have
\begin{equation*}
  \abs{\pair{T\phi_{Q,i}^1}{\phi_{R,j}^2}}\lesssim \Big(1+\frac{\dist(Q,R)}{\ell(Q)}\Big)^{-\alpha}\abs{Q},
\end{equation*}
and then
\begin{equation*}
  \abs{K^{i,j;k}_{R,R+\ell(R)m}(x,y)}\lesssim \frac{1_R(x)}{\abs{R}} \sum_{\substack{Q\subseteq R+\ell(R)m\\ \ell(Q)=2^{-k}\ell(R)}}
      \Big(1+\frac{\dist(Q,R)}{\ell(Q)}\Big)^{-\alpha}1_Q(y).
\end{equation*}
The number of the cubes $Q$ with $\dist(Q,R)=n\ell(Q)$, $(n=0,1,\ldots,2^k-1)$, is $\mathcal{O}(2^{k(d-1)})$, while each of them has measure $\abs{Q}=2^{-kd}\abs{R}$. Hence
\begin{equation*}
  \Norm{K^{i,j;k}_{R,R+\ell(R)m}}{L^2(\R^d\times\R^d)}^2
  \lesssim\frac{1}{\abs{R}}\sum_{n=0}^{2^k-1} 2^{k(d-1)}\cdot (1+n)^{-2\alpha}\cdot 2^{-kd}\abs{R}
  {\eqsim 2^{-2k\alpha}.}\qedhere
\end{equation*}
\end{proof}

{(We note that it is the last step of the previous computation that would lead to a slightly different conclusion for $\alpha\geq\frac12$.)}

Lemma~\ref{lem:disjointCoef} is enough to estimate the part of the series with $m\neq 0$; indeed
\begin{equation*}
\begin{split}
  \sum_{m\neq 0} &\sum_{k=0}^\infty \sum_R\abs{\pair{T\mathbb{D}_{R+\ell(R)m}^{b_1,k}f}{\mathbb{D}_R^{b_2}g}} \\
  &\leq\sum_{m\neq 0}\sum_{k=0}^\infty\sum_R\sum_{i,j}\Norm{K_{R,{R+\ell(R)m}}^{i,j;k}}{2}\Norm{\mathbb{D}_{R+\ell(R)m}^{b_1,k}f}{2}\Norm{\mathbb{D}_R^{b_2}g}{2} \\
  &\lesssim\sum_{m\neq 0}\abs{m}^{-d-\alpha}\sum_{k=0}^\infty 2^{-\alpha k}
  \sum_R\Norm{\mathbb{D}_{R+\ell(R)m}^{b_1,k}f}{2}\Norm{\mathbb{D}_R^{b_2}g}{2},
\end{split}
\end{equation*}
where
\begin{equation*}
\begin{split}
  \sum_R &\Norm{\mathbb{D}_{R+\ell(R)m}^{b_1,k}f}{2}\Norm{\mathbb{D}_R^{b_2}g}{2} \\
  &\leq \Big(\sum_R\Norm{\mathbb{D}_{R+\ell(R)m}^{b_1,k}f}{2}^2\Big)^{1/2}\Big(\sum_R\Norm{\mathbb{D}_R^{b_2}g}{2}^2\Big)^{1/2} \\
  &=\Big(\sum_Q\Norm{\mathbb{D}_{Q}^{b_1}f}{2}^2\Big)^{1/2}\Big(\sum_R\Norm{\mathbb{D}_R^{b_2}g}{2}^2\Big)^{1/2}
  \lesssim\Norm{f}{2}\Norm{g}{2},
\end{split}
\end{equation*}
and
\begin{equation*}
  \sum_{m\neq 0}\abs{m}^{-d-\alpha}\sum_{k=0}^\infty 2^{-\alpha k}\lesssim 1.
\end{equation*}

\subsection{Nested cubes}
We are left with the part with $m=0$, that is,
\begin{equation*}
  \sum_{k=0}^\infty\sum_R\pair{T\mathbb{D}_R^{b_1,k}f}{\mathbb{D}_R^{b_2}g}
  =\sum_R\pair{T\mathbb{D}_R^{b_1}f}{\mathbb{D}_R^{b_2}g}+\sum_R\sum_{Q\subsetneq R}\pair{T\mathbb{D}_Q^{b_1}f}{\mathbb{D}_R^{b_2}g}.
\end{equation*}

\begin{lemma}
We have
\begin{equation}\label{eq:nestedPart}
\begin{split}
  \sum_R\sum_{Q\subsetneq R}\pair{T\mathbb{D}_Q^{b_1}f}{\mathbb{D}_R^{b_2}g}
  &=\sum_Q\pair{T\mathbb{D}_Q^{b_1}f}{b^2_{Q^{a,2}}}\frac{\ave{g}_Q}{\ave{b_{Q^{a,2}}^2}_Q}
   { -\pair{Tf}{b^2_{Q^0}}
    \frac{\ave{g}_{Q^0}}{\ave{b^2_{Q^0}}_{Q^0}}} \\
  &\qquad+\sum_{k=0}^\infty\sum_R\sum_{\substack{S\subseteq R \\ \ell(S)=\ell(R)/2}}
     \sum_{j=0}^{2^d}\pair{T\mathbb{D}_S^{b_1,k}f}{1_{S^c}\psi_{R,j;S}^{2}}\ave{\mathbb{D}_{R,j}^{b_2}g}_{R_j},
\end{split}
\end{equation}
for some bounded functions $\psi_{R,j;S}^{2}$.
\end{lemma}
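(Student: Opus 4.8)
### Proof plan for the decomposition of the nested-cube sum

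The plan is to carry out the standard ``paraproduct splitting'' on the inner sum over $Q\subsetneq R$, with the adapted martingale differences $\mathbb{D}_Q^{b_1}$ in place of the usual Haar projections. First I would regroup the double sum by exchanging the order of summation: for a fixed $Q$, the cubes $R\supsetneq Q$ that arise, organized by generations, telescope. Concretely, writing $\mathbb{D}_R^{b_2}g=\sum_{j=0}^{2^d}\phi_{R,j}^2\ave{\mathbb{D}_{R,j}^{b_2}g}_{R_j}$ via \eqref{eq:DQformula}, and noting that when $Q\subsetneq R$ the cube $Q$ lies inside exactly one child $R_j$ of $R$, I would separate the contribution of that ``containing child'' from the rest. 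The part where $Q$ sits in the same child as the one indexed by $j$, summed over all $R$ of a fixed size relative to $Q$, produces a telescoping in $R$: since $\mathbb{E}^{b_2}$ is a martingale, $\sum_{R\supsetneq Q}(\text{containing-child contribution})$ collapses to a single averaging operator at the level of $Q$'s minimal stopping cube $Q^{a,2}$, giving the first term $\sum_Q\pair{T\mathbb{D}_Q^{b_1}f}{b^2_{Q^{a,2}}}\ave{g}_Q/\ave{b^2_{Q^{a,2}}}_Q$. Here one uses that $b^2_{Q^{a,2}}$ is exactly the function defining $\mathbb{E}_Q^{b_2}$ and that $\ave{\cdot}_{R_j}$ of the finer martingale difference applied to $g$ telescopes down to $\ave{g}_Q$ (up to the normalizing factor).

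For the remaining terms — where $Q$ does \emph{not} lie in the child of $R$ indexed by $j$ — I would fix the generation gap between $R$ and $Q$ and rewrite the sum in terms of $S\subseteq R$ with $\ell(S)=\ell(R)/2$, because every $Q\subsetneq R$ with $\ell(Q)=2^{-k-1}\ell(R)$ lies in a unique such $S$; summing $\mathbb{D}_Q^{b_1}f$ over those $Q$ gives $\mathbb{D}_S^{b_1,k}f$ in the notation already introduced. The functions $\phi_{R,j}^2$, restricted to the complement of the child containing $Q$ (equivalently, outside $S$ when $S\subseteq R_j$ is false), are bounded — this is where one needs the $L^\infty$ bounds built into the $(\infty,t)$ accretive system, since $\phi_{R,j}^2$ is a linear combination of $1_{R_j}b^2_{R_j^a}/\ave{b^2_{R_j^a}}_{R_j}$ and $1_R b^2_{R^a}\cdot(\text{constant})$, and the non-degeneracy $\abs{\ave{b^2_{R_j^a}}_{R_j}}\gtrsim 1$ together with $\Norm{b^2_{R_j^a}}_\infty\lesssim 1$ gives the uniform bound. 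These are collected into the symbol $\psi_{R,j;S}^2:=\phi_{R,j}^2$ (viewed as a function that will be tested against $\mathbb{D}_S^{b_1,k}f$, supported away from $S$), and the pairing becomes $\pair{T\mathbb{D}_S^{b_1,k}f}{1_{S^c}\psi_{R,j;S}^2}\ave{\mathbb{D}_{R,j}^{b_2}g}_{R_j}$, exactly the second term on the right of \eqref{eq:nestedPart}.

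The bookkeeping step that needs care is matching up indices: one must check that every pair $(Q,R)$ with $Q\subsetneq R$ is accounted for exactly once across the two terms, i.e., that ``$Q$ lies in the $j$-th child of $R$'' versus ``$Q$ lies in some other child'' partitions the sum, and that in the telescoped term the inner averages genuinely collapse to $\ave{g}_Q$ rather than leaving residual boundary contributions. The key algebraic identity underneath is the second line of the displayed computation for $(\mathbb{D}_Q^b)^2$, i.e.\ that the martingale-difference machinery satisfies $\mathbb{E}_Q^b\mathbb{D}_R^b=0$ for $Q\subsetneq R$ in the non-containing child and reduces to $\mathbb{E}_Q^b$ composed appropriately in the containing child. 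I expect the main obstacle to be purely combinatorial/organizational: setting up the change of summation variable from $\{(Q,R):Q\subsetneq R\}$ to $\{(S,R,k):S\subseteq R,\ \ell(S)=\ell(R)/2\}$ and simultaneously extracting the telescoping term without double-counting, rather than any genuinely hard estimate — the estimates on $\Norm{\psi_{R,j;S}^2}_\infty$ and the forthcoming bounds on the resulting ``paraproduct'' and ``nested'' pieces are where the analytic work happens, but those come after this purely algebraic identity is in place.
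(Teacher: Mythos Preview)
Your proposal has the right architecture---split each $\mathbb{D}_R^{b_2}g$ into a ``near'' part (over the child $S=R_Q$ containing $Q$) and a ``far'' part (over $S^c$), telescope the near parts in $R$, and reorganize the far parts as the double sum over $(R,S,k)$---but the telescoping step as you describe it fails, and consequently your identification $\psi_{R,j;S}^2=\phi_{R,j}^2$ is wrong.

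The problem is that $T\mathbb{D}_Q^{b_1}f$ is globally supported, so the pairing $\pair{T\mathbb{D}_Q^{b_1}f}{1_{R_Q}\mathbb{D}_R^{b_2}g}$ genuinely sees the indicator $1_{R_Q}$. As $R$ runs over the ancestors of $Q$, the cube $R_Q$ grows, and the terms $\pair{T\mathbb{D}_Q^{b_1}f}{1_{R_Q}\mathbb{E}_{R_Q}^{b_2}g}$ and $\pair{T\mathbb{D}_Q^{b_1}f}{1_{R'_Q}\mathbb{E}_{R'_Q}^{b_2}g}$ for adjacent generations do \emph{not} cancel, because the indicators differ. What does telescope is the sum of the \emph{global} expressions
\begin{equation*}
  b^2_{R_Q^{a,2}}\frac{\ave{g}_{R_Q}}{\ave{b^2_{R_Q^{a,2}}}_{R_Q}}
  -b^2_{R^{a,2}}\frac{\ave{g}_R}{\ave{b^2_{R^{a,2}}}_R},
\end{equation*}
with no indicator; summed over $R\supsetneq Q$ this collapses exactly to $b^2_{Q^{a,2}}\ave{g}_Q/\ave{b^2_{Q^{a,2}}}_Q$. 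The paper therefore writes $1_{R_Q}\mathbb{D}_R^{b_2}g=(1-1_{R_Q^c})(\text{global expression})$, uses the global expression for the telescoping, and throws its $1_{R_Q^c}$-part into the error. To express that correction in the form $\sum_j(\cdot)\ave{\mathbb{D}_{R,j}^{b_2}g}_{R_j}$, one must first rewrite the global expression in terms of $\ave{\mathbb{D}_R^{b_2}g}_{R_Q}$ and $\ave{\mathbb{D}_{R,0}^{b_2}g}_R$; this is a short but nontrivial algebraic step, and it contributes to $\psi$. The outcome is that $\psi_{R,j;S}^2$ equals $\phi_{R,j}^2$ \emph{minus} explicit corrections: for $j\geq 1$ with $R_j=S$ one subtracts $b^2_{S^{a,2}}/\ave{b^2_{S^{a,2}}}_S$, and for $j=0$ one subtracts the function arising from the $\omega$-type term. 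Without these subtractions your two right-hand terms simply do not add up to the left side.
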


\begin{proof}
For $Q\subsetneq R$, let $R_Q$ be the unique subcube of $R$ {that} contains $Q$. Then
\begin{equation*}
  \mathbb{D}_R^{b_2}g
  =1_{R_Q}\mathbb{D}_R^{b_2}g+ 1_{R_Q^c}\mathbb{D}_R^{b_2}g,
\end{equation*}
where further (we temporarily drop the superscript $2$)
\begin{equation*}
  1_{R_Q}\mathbb{D}_R^{b}g
  =(1-1_{R_Q^c})\Big(b_{R_Q^{a}}\frac{\ave{{g}}_{R_Q}}{\ave{b_{R_Q^{a}}}_{R_Q}}
    -b_{R^{a}}\frac{\ave{{g}}_{R}}{\ave{b_{R^{a}}}_{R}}\Big)
\end{equation*}
and
\begin{equation}\label{eq:nestRewrite}
\begin{split}
  b_{R_Q^{a}} &\frac{\ave{{g}}_{R_Q}}{\ave{b_{R_Q^{a}}}_{R_Q}}
    -b_{R^{a}}\frac{\ave{{g}}_{R}}{\ave{b_{R^{a}}}_{R}} \\
  &=\frac{b_{R_Q^{a}}}{\ave{b_{R_Q^a}}_{R_Q}}
  \Big(\ave{{g}}_{R_Q}-\ave{b_{R^a}}_{R_Q}\frac{\ave{{g}}_{R}}{\ave{b_{R^{a}}}_{R}}\Big)
  +\Big(\frac{\ave{b_{R^a}}_{R_Q}}{\ave{b_{R_Q^a}}_{R_Q}}b_{R_Q^a}-b_{R^{a}}\Big)\frac{\ave{{g}}_{R}}{\ave{b_{R^{a}}}_{R}} \\
  &=\frac{b_{R_Q^{a}}}{\ave{b_{R_Q^a}}_{R_Q}}\ave{\mathbb{D}_R^b {g}}_{R_Q}
  +\Big(\frac{\ave{b_{R^a}}_{R_Q}}{\ave{b_{R_Q^a}}_{R_Q}}b_{R_Q^a}-b_{R^{a}}\Big)
  \frac{\ave{\mathbb{D}_{R,0}^b {g}}_{R}}{\ave{b_{R^{a}}}_{R}}.
\end{split}
\end{equation}
On the last line we observed that the function in the parentheses is zero unless $R_Q$ is a stopping cube (i.e., $R_Q^a=R_Q\neq R^a$), and thus we may replace ${g}$ by $\mathbb{D}_{R,0}^b {g}$ inside the average on $R$.

Substituting back, it follows that
\begin{equation}\label{eq:babyTbAStep}
\begin{split}
   \mathbb{D}_R^{b}g
   &=   b_{R_Q^{a}}\frac{\ave{{g}}_{R_Q}}{\ave{b_{R_Q^{a}}}_{R_Q}}
    -b_{R^{a}}\frac{\ave{{g}}_{R}}{\ave{b_{R^{a}}}_{R}} \\
   &\quad +1_{R_Q^c}\Big\{\mathbb{D}_R^{b}g
       -\frac{b_{R_Q^{a}}}{\ave{b_{R_Q^{a}}}_{R_Q}}\ave{\mathbb{D}_R^{b}g}_{R_Q}
        -\Big(\frac{\ave{b_{R^a}}_{R_Q}}{\ave{b_{R_Q^a}}_{R_Q}}b_{R_Q^a}-b_{R^{a}}\Big)
        \frac{\ave{\mathbb{D}_{R,0}^b {g}}_{R}}{\ave{b_{R^{a}}}_{R}}
        \Big\}  \\
   &=\Big(b_{R_Q^{a}}\frac{\ave{g}_{R_Q}}{\ave{b_{R_Q^{a}}}_{R_Q}}
    -b_{R^{a}}\frac{\ave{g}_{R}}{\ave{b_{R^{a}}}_{R}}\Big)
    +1_{R_Q^c} {\sum_{j=0}^{2^d}}\psi_{R,j;R_Q}^{{2}}\ave{\mathbb{D}_{R,j}^{b}g}_{R_j},
\end{split}
\end{equation}
where (recalling the formula $\mathbb{D}_R^b g={\sum_{j=0}^{2^d}}\phi_{R,j}\ave{\mathbb{D}_{R,j}^b g}_{R_j}$)
\begin{equation*}
  \psi_{R,j;S}^{{2}}:=\phi_{R,j}-
    \begin{cases} \Big(\ave{b_{R^a}}_{S}/\ave{b_{S^a}}_{S}\cdot b_{S^a}
         -b_{R^{a}}\Big)\Big/\ave{b_{R^{a}}}_{R}, & \text{if }j=0,\\
     b_{S^{a}}/\ave{b_{S^{a}}}_{S}, & \text{if }j\geq 1\text{ and }R_j=S, \\
     0. & \text{else}
     \end{cases}
\end{equation*}
are bounded functions.

Pairing with $T\mathbb{D}_Q^{b_1}f$, we obtain (changing the summation order, and observing that $Q$ is the smallest $R_Q$, as well as the telescoping cancellation)
\begin{equation*}
\begin{split}
  \sum_Q\sum_{R\supsetneq Q} &\Bpair{T\mathbb{D}_Q^{b_1}f}{b^2_{R_Q^{a,2}}\frac{\ave{g}_{R_Q}}{\ave{b^2_{R_Q^{a,2}}}_{R_Q}}
    -b^2_{R^{a,2}}\frac{\ave{g}_{R}}{\ave{b^2_{R^{a,2}}}_{R}}} \\
    &=\sum_Q \pair{T\mathbb{D}_Q^{b_1}f}{b^2_{Q^{a,2}}}
    \frac{\ave{g}_{Q}}{\ave{b^2_{{Q}^{a,2}}}_{Q}}
   { -\sum_Q \pair{T\mathbb{D}_Q^{b_1}f}{b^2_{Q^0}}
    \frac{\ave{g}_{Q^0}}{\ave{b^2_{Q^0}}_{Q^0}},}
\end{split}
\end{equation*}
{where the first term on the right arises from $R_Q=Q$ when $R\supsetneq Q$ takes its minimal value, the dyadic parent of $Q$, while the second term arises from the maximal value $R=Q^0$ coming from the implicit summation condition $R\subseteq Q^0$. Since the initial cube $Q^0$ is defined to be a stopping cube, we have $(Q^0)^{a,2}=Q^0$. Recalling that $f=\sum_Q\mathbb{D}_Q^{b_1}f$, the second term on the right simplifies to
\begin{equation*}
  \sum_Q \pair{T\mathbb{D}_Q^{b_1}f}{b^2_{Q^0}}
    \frac{\ave{g}_{Q^0}}{\ave{b^2_{Q^0}}_{Q^0}}
    =\pair{Tf}{b^2_{Q^0}}
    \frac{\ave{g}_{Q^0}}{\ave{b^2_{Q^0}}_{Q^0}}.
\end{equation*}}

For the other term on the right of \eqref{eq:babyTbAStep}, we introduce the auxiliary summation variable $S:=R_Q$, regroup the summation according to the relative size of $Q$ and $S$, and recall the notation
\begin{equation*}
  \mathbb{D}_S^{b_1,k}f:=\sum_{\substack{Q\subseteq S\\ \ell(Q)=2^{-k}\ell(S)}}\mathbb{D}_Q^{b_1}f
\end{equation*}
to arrive at the asserted formula.
\end{proof}

{
The middle term on the right of \eqref{eq:nestedPart} is estimated by the assumption that $b_Q^2$ is a (strong) $(\infty,t)$ accretive system for $T^*$ on sparse subcubes of $Q^0$, namely
\begin{equation*}
  \abs{\pair{Tf}{b_{Q^0}^2}}
  =\abs{\pair{f}{T^*b_{Q^0}^2}}
  =\abs{Q^0}\Babs{\fint_{Q^0} f\cdot T^*b_{Q^0}^2\ud x}
  \leq\abs{Q^0}\Big(\fint_{Q^0}\abs{f}^{t'}\ud x\Big)^{1/t'}\Big(\fint_{Q^0}\abs{T^*(b_{Q^0}^2)}^{t}\ud x\Big)^{1/t},
\end{equation*}
and hence (recalling that $s'>t'>1$)
\begin{equation*}
\begin{split}
  \Babs{ \pair{Tf}{b_{Q^0}^2} \frac{\ave{g}_{Q^0}}{\ave{b^2_{Q^0}}_{Q^0}} }
  &\lesssim\abs{Q^0}\Big(\fint_{Q^0}\abs{f}^{t'}\ud x\Big)^{1/t'}\fint_{Q^0}\abs{g}\ud x \\
  &\leq\abs{Q^0}\Big(\fint_{Q^0}\abs{f}^{s'}\ud x\Big)^{1/s'}\Big(\fint_{Q^0}\abs{g}^{s'}\ud x\Big)^{1/s'}
  =\abs{Q^0}^{1-2/s'}\Norm{f}{s'}\Norm{g}{s'},
\end{split}
\end{equation*}
which agrees with the right hand side of the bound in Proposition \ref{prop:babyTb}.}

The last summand in \eqref{eq:nestedPart} can be written as
\begin{equation*}
\begin{split}
  \pair{T\mathbb{D}_S^{b_1,k}f}{1_{S^c}\psi_{R,j;S}^{{2}}}\ave{\mathbb{D}_{R,j}^{{b_2}}g}_{R_j}
  &=\sum_{\substack{Q\subseteq S;\,i=0,\ldots,2^d\\ \ell(Q)=2^{-k}\ell(S)}}
     \ave{\mathbb{D}_{Q,i}^{b_1}f}_{Q_i}
     \pair{T\phi_{Q,i}^1}{1_{S^c}\psi_{R,j;S}^{{2}}}\ave{\mathbb{D}_{R,j}^{b_2}g}_{R_j} \\
  &={\sum_{i=0}^{2^d}}
  \iint \mathbb{D}_{S,i}^{b_1,k}f(y)\tilde{K}^{i,j;k}_{R,S}(x,y)\mathbb{D}_{R,j}^{b_2}g(x)\ud x\ud y,\qquad\text{where}\\
  \tilde{K}^{i,j;k}_{R,S}(x,y) &:=\sum_{{\substack{Q\subseteq S\\ \ell(Q)=2^{-k}\ell(S)}}}\frac{1_{Q_i}(y)}{\abs{Q_i}}
     \pair{T\phi_{Q,i}^1}{1_{S^c}\psi_{R,j;S}^{2}}\frac{1_{R_j}(x)}{\abs{R_j}}.
\end{split}
\end{equation*}
Just as in Lemma~\ref{lem:disjointCoef} (case $\abs{m}_\infty=1$) we check that:

{
\begin{lemma}\label{lem:tildeK}
\begin{equation*}
  \Norm{\tilde{K}^{i,j;k}_{R,S}}{L^2(\R^{2d})}\lesssim 2^{-k\alpha},\qquad\alpha<\tfrac12.
\end{equation*}
\end{lemma}

\begin{proof}
We first estimate the coefficients $\pair{T\phi_{Q,i}^1}{1_{S^c}\psi_{R,j;S}^{2}}$. Using the mean zero property of $\phi_{Q,i}^1$,
\begin{equation*}
\begin{split}
  \pair{T\phi_{Q,i}^1}{1_{S^c}\psi_{R,j;S}^{2}}
  &=\iint_{S^c\times Q}K(x,y)\phi_{Q,i}^1(y)\psi_{R,j;S}^{2}(x)\ud x\ud y \\
  &=\iint_{S^c\cap 3Q \times Q}K(x,y)\phi_{Q,i}^1(y)\psi_{R,j;S}^{2}(x)\ud x\ud y \\
  &\qquad +\iint_{S^c\setminus(3Q)\times Q}[K(x,y)-K(x,c_Q)]\phi_{Q,i}^1(y)\psi_{R,j;S}^{2}(x)\ud x\ud y.
\end{split}
\end{equation*}
By the boundedness of both $\phi_{Q,i}^1$ and $\psi_{R,j;S}^{2}$, and the fact that $Q\subseteq S\subseteq R$ so that $S^c\subseteq Q^c$, we have
\begin{equation*}
\begin{split}
 \abs{\pair{T\phi_{Q,i}^1}{1_{S^c}\psi_{R,j;S}^{2}}}
  &\lesssim\iint_{(3Q\setminus Q) \times Q}\frac{\ud x\ud y}{\abs{x-y}^d}+\iint_{S^c\setminus(3Q)\times Q}
     \frac{\ell(Q)^\alpha}{\abs{x-c_Q}^{d+\alpha}}\ud x\ud y.
\end{split}
\end{equation*}
The first term is bounded by $C\abs{Q}$ and the second is estimated as
\begin{equation*}
  \int_{S^c\setminus(3Q)}\frac{\ell(Q)^\alpha\abs{Q}}{\abs{x-c_Q}^{d+\alpha}}\ud x
  \eqsim\int_{\max\{\ell(Q),\dist(Q,S^c)\}}^\infty\frac{\ell(Q)^\alpha\abs{Q}}{t^{d+\alpha}}t^{d-1}\ud t
  \eqsim\frac{\ell(Q)^\alpha\abs{Q}}{\max\{\ell(Q),\dist(Q,S^c)\}^{\alpha}}.
\end{equation*}

So altogether we have
\begin{equation*}
  \abs{\pair{T\phi_{Q,i}^1}{1_{S^c}\psi_{R,j;S}^2}}\lesssim \Big(1+\frac{\dist(Q,S^c)}{\ell(Q)}\Big)^{-\alpha}\abs{Q},
\end{equation*}
and then
\begin{equation*}
  \abs{\tilde K^{i,j;k}_{R,S}(x,y)}\lesssim \frac{1_R(x)}{\abs{R}} \sum_{\substack{Q\subseteq S\\ \ell(Q)=2^{-k}\ell(S)}}
      \Big(1+\frac{\dist(Q,S^c)}{\ell(Q)}\Big)^{-\alpha}1_Q(y).
\end{equation*}
The number of the cubes $Q$ with $\dist(Q,S^c)=n\ell(Q)$, $(n=0,1,\ldots,2^k-1)$, is $\mathcal{O}(2^{k(d-1)})$, while each of them has measure $\abs{Q}=2^{-kd}\abs{S}$. Recalling also that $\ell(S)=\ell(R)/2$, we have
\begin{equation*}
\begin{split}
  \Norm{\tilde K^{i,j;k}_{R,S}}{L^2(\R^d\times\R^d)}^2
  &\lesssim\frac{1}{\abs{R}}\sum_{n=0}^{2^k-1} 2^{k(d-1)}\cdot (1+n)^{-2\alpha}\cdot 2^{-kd}\abs{S} \\
  &=2^{-k}\sum_{n=1}^{2^k}n^{-2\alpha}
  \eqsim 2^{-2k\alpha},\qquad \alpha<\frac12.\qedhere
\end{split}
\end{equation*}
\end{proof}

With the help of Lemma \ref{lem:tildeK}, the last sum on the right of \eqref{eq:nestedPart} can now be estimated by}
\begin{equation*}
\begin{split}
  \sum_{k=0}^\infty &\sum_R\sum_{\substack{S\subseteq R \\ \ell(S)=\ell(R)/2}}
     \sum_{j}\abs{\pair{T\mathbb{D}_S^{b_1,k}f}{1_{S^c}\psi_{R,j;S}^{2}}\ave{\mathbb{D}_{R,j}^{b_2}g}_{R_j}} \\
   &\lesssim  \sum_{k=0}^\infty\sum_R\sum_{\substack{S\subseteq R \\ \ell(S)=\ell(R)/2}}\sum_{j,i}
    2^{-k\alpha}\Norm{\mathbb{D}_{S,i}^{b_1,k}f}{2}\Norm{\mathbb{D}_{R,j}^{b_2}g}{2} \\
   &\lesssim  \sum_{k=0}^\infty\sum_R\sum_{j,i}
    2^{-k\alpha}\Norm{\mathbb{D}_{R,i}^{b_1,k+1}f}{2}\Norm{\mathbb{D}_{R,j}^{b_2}g}{2}
   \lesssim  \Norm{f}{2}\Norm{g}{2},
\end{split}
\end{equation*}
which completes this part of the estimate.

\subsection{The paraproduct}
The other part in \eqref{eq:nestedPart}, which still requires attention, is
\begin{equation*}
\begin{split}
  \sum_R\ &\abs{\pair{T\mathbb{D}_R^{b_1}f}{b_{R^{a,2}}^2}\frac{\ave{g}_R}{\ave{b_{R^{a,2}}^2}_R}}
  \lesssim\sum_R\abs{\pair{(\mathbb{D}_R^{b_1})^2{f}+\omega_R^1\mathbb{D}_{R,0}^{b_1}f}{T^*b_{R^{a,2}}^2}\ave{g}_R} \\
  &\leq\sum_R\abs{\pair{\mathbb{D}_R^{b_1}f}{(\mathbb{D}_R^{b_1})^*T^*b_{R^{a,2}}^2}\ave{g}_R}
  +\sum_R\abs{\pair{\mathbb{D}_{R,0}^{b_1}f}{\omega_R^1T^*b_{R^{a,2}}^2}\ave{g}_R}.
\end{split}
\end{equation*} 
{Using estimates of the type $\sum_R\abs{\pair{\phi_R}{\psi_R}}\leq\sum_R\int\abs{\phi_R}\abs{\psi_R}\leq\int(\sum_R\abs{\phi_R}^2)^{1/2}(\sum_R\abs{\psi_R}^2)^{1/2}$ and then H\"older's inequality, we may continue with}
\begin{equation*}
\begin{split}  
  &\leq\BNorm{\Big(\sum_R\abs{\mathbb{D}_R^{b_1}f}^2\Big)^{1/2}}{L^{s'}}
    \BNorm{\Big(\sum_R  \abs{(\mathbb{D}_R^{b_1})^*T^*b_{R^{a,2}}^2\ave{g}_R}^2\Big)^{1/2}}{L^{s}} \\
  &\qquad+\BNorm{\Big(\sum_R\abs{\mathbb{D}_{R,0}^{b_1}f}^2\Big)^{1/2}}{L^{s'}}
     \BNorm{\Big(\sum_R\abs{\omega_R^1 T^*b_{R^{a,2}}^2 \ave{g}_R}^2\Big)^{1/2}}{L^{s}} \\
   &\lesssim\Norm{f}{L^{s'}}\Norm{g}{L^{s}}\sup_S\abs{S}^{-1/s}
       \BNorm{\Big(\sum_{R\subseteq S} \abs{(\mathbb{D}_R^{b_1})^*T^*b_{R^{a,2}}^2}^2\Big)^{1/2}}{L^{s}} \\
     &\qquad+\Norm{f}{L^{s'}}\Norm{g}{L^{s}}\sup_S\abs{S}^{-1/s}
     \BNorm{\Big(\sum_{R\subseteq S}\abs{\omega_R^1 T^*b_{R^{a,2}}^2 }^2\Big)^{1/2}}{L^{s}}, \\
\end{split}
\end{equation*}
where in the last step we used Proposition~\ref{prop:LP} and the following $L^s$ version of the Carleson embedding theorem from \cite[Theorem 8.2]{HMP}:

\begin{proposition}\label{prop:Carleson}
\begin{equation*}
   \BNorm{\Big(\sum_R \abs{ \theta_R\ave{g}_R}^2\Big)^{1/2}}{L^s}
   \lesssim\Norm{g}{L^s}\sup_S\abs{S}^{-1/s}\BNorm{\Big(\sum_{R\subseteq S} \abs{ \theta_R}^2\Big)^{1/2}}{L^s},\qquad s\in(1,2].
\end{equation*}
\end{proposition}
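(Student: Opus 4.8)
The plan is to reduce the vector-valued (square-function) Carleson embedding to its scalar prototype via duality and a $T1$-type argument for the associated positive operator, following the pattern of \cite{HMP}. First I would dualize the left-hand side against a nonnegative sequence $(h_R)$ of functions with $\big\|(\sum_R h_R^2)^{1/2}\big\|_{L^{s'}}\leq 1$, reducing matters to
\begin{equation*}
  \sum_R\int \theta_R\,\ave{g}_R\, h_R\ud x
  \lesssim\Norm{g}{L^s}\sup_S\abs{S}^{-1/s}\BNorm{\Big(\sum_{R\subseteq S}\abs{\theta_R}^2\Big)^{1/2}}{L^s}.
\end{equation*}
Next I would linearize $\ave{g}_R=\abs{R}^{-1}\int_R g$ and exchange the order, writing the left side as $\int g(y)\,\Theta(y)\ud y$ where $\Theta(y):=\sum_{R\ni y}\abs{R}^{-1}\int\theta_R h_R\ud x$; by H\"older it suffices to show $\Norm{\Theta}{L^{s'}}\lesssim\sup_S\abs{S}^{-1/s}\big\|(\sum_{R\subseteq S}\abs{\theta_R}^2)^{1/2}\big\|_{L^s}$ (using $\big\|(\sum_R h_R^2)^{1/2}\big\|_{s'}\le1$).

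The core estimate is then a Carleson-measure bound: the quantity $\mu(S):=\sum_{R\subseteq S}\big|\int\theta_R h_R\ud x\big|$ (or rather its natural square-function refinement) should be controlled by $\sup\abs{S}^{-1/s}\|\cdots\|_{L^s}\cdot\|(\sum h_R^2)^{1/2}\|_{L^{s'}}\cdot\abs{S}$ via Cauchy--Schwarz in $R$ at each point, followed by the embedding in the scalar case. Concretely, inside a fixed $S$ I would pair $(\theta_R)$ against $(h_R)$ using Cauchy--Schwarz pointwise in $y$, integrate, and apply H\"older with exponents $s,s'$ to separate the $\theta$-square-function (controlled by hypothesis) from the $h$-square-function (controlled by normalization); the stopping-time/greedy selection of maximal cubes where the local averages of $g$ are large then converts this into the $L^s$ bound on $\Theta$ by a standard good-$\lambda$ or direct distributional argument. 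The restriction $s\in(1,2]$ enters precisely because the $\ell^2$ square function sits on the ``correct'' side of $L^s$ only in this range, so that Minkowski's integral inequality ($\ell^2\hookrightarrow L^s(\ell^2)$ versus $L^s(\ell^2)\hookrightarrow\ell^2$) can be used without loss.

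I expect the main obstacle to be the bookkeeping in the stopping-time decomposition that upgrades the local (Carleson-type) bound into the global $L^s$ estimate: one must organize the cubes $R$ into generations according to the size of $\ave{g}_R$ relative to a maximal function of $g$, verify that the ``tops'' of these generations satisfy a packing (sparseness) condition, and then sum the local contributions geometrically. This is exactly where the hypothesis has to be used with the supremum over all $S$, not just $S=Q^0$, and where the $s\le2$ constraint is genuinely needed. The singular-integral structure plays no role here — $\theta_R$ and $h_R$ are arbitrary — so the whole argument is a real-variable Carleson embedding; I would simply cite \cite[Theorem 8.2]{HMP} for the precise form, and the sketch above indicates how its proof goes.
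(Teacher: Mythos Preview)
The paper does not prove this proposition at all: it is simply quoted from \cite[Theorem 8.2]{HMP}, with no argument given. Your proposal ends at exactly the same place---citing that reference---so in that sense it matches the paper precisely; the additional sketch you provide (dualize, then run a stopping-time decomposition on the size of $\ave{g}_R$, using the Carleson hypothesis locally on each generation and summing geometrically) is a reasonable outline of how the result in \cite{HMP} is obtained, and goes beyond what the paper itself supplies.
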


It remains to estimate the two Carleson norms above.

\begin{lemma}
\begin{equation*}
   \BNorm{\Big(\sum_{R\subseteq S} \abs{(\mathbb{D}_R^{b_1})^*T^*b_{R^{a,2}}^2}^2\Big)^{1/2}}{L^{s}}
   \lesssim \abs{S}^{1/s}.
\end{equation*}
\end{lemma}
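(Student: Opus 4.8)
The plan is to organize the sum over $R$ according to the stopping cube $P:=R^{a,2}\in\mathscr{Q}_2$, exploiting that on a block of cubes with a common value of $R^{a,2}$ the function $T^*b^2_{R^{a,2}}$ is a single fixed function, and that $(\mathbb{D}_R^{b_1})^*$ feels this function only through its restriction to $R$. Fix $P\in\mathscr{Q}_2$ and let $\mathcal{R}(P):=\{R:R^{a,2}=P\}$, which consists of dyadic cubes $R\subseteq P$. Since each $\phi^1_{R,i}$ is supported in $R\subseteq P$, we have $\theta_R:=(\mathbb{D}_R^{b_1})^*T^*b^2_{R^{a,2}}=(\mathbb{D}_R^{b_1})^*(1_PT^*b^2_P)$ for every $R\in\mathcal{R}(P)$. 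Dropping the constraint $R^{a,2}=P$ (which only enlarges the square sum) and using $(\mathbb{D}_R^{b_1})^*=(\mathbb{D}_{R,1}^{b_1})^*$, Proposition~\ref{prop:LP} with exponent $s$ gives
\[
  \BNorm{\Big(\sum_{R\in\mathcal{R}(P)}\abs{\theta_R}^2\Big)^{1/2}}{L^s}
  \le\BNorm{\Big(\sum_{R}\abs{(\mathbb{D}_R^{b_1})^*(1_PT^*b^2_P)}^2\Big)^{1/2}}{L^s}
  \lesssim\Norm{1_PT^*b^2_P}{L^s}.
\]
As $P$ is trivially not contained in any smaller member of $\mathscr{Q}_2$, the accretivity hypothesis applies with $Q'=P$, so $\fint_P\abs{T^*b^2_P}^t\lesssim1$, and by Jensen's inequality ($s\le t$) we obtain $\Norm{1_PT^*b^2_P}{L^s}\lesssim\abs{P}^{1/s}$. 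Thus each block contributes $\|(\sum_{R\in\mathcal{R}(P)}|\theta_R|^2)^{1/2}\|_{L^s}\lesssim|P|^{1/s}$.

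Next I identify which blocks actually occur. For $R\subseteq S$ the cube $P=R^{a,2}$ satisfies either $P\subseteq S$ (so $P\in\mathscr{Q}_2$ and $P\subseteq S$), or $P\supsetneq S$; in the latter case $P$ is a member of $\mathscr{Q}_2$ containing $S$, hence $P=S^{a,2}$ by minimality of the latter. Therefore $\sum_{R\subseteq S}|\theta_R|^2$ splits into the blocks indexed by $P\in\mathscr{Q}_2$ with $P\subseteq S$, handled above, together with the single \emph{top block} $\mathcal{R}(S^{a,2})\cap\{R\subseteq S\}$, which is empty when $S\in\mathscr{Q}_2$. For the top block, all of its cubes $R$ lie in $S\subseteq S^{a,2}$, so $\theta_R=(\mathbb{D}_R^{b_1})^*(1_ST^*b^2_{S^{a,2}})$; since $S$ is not contained in any member of $\mathscr{Q}_2$ strictly smaller than $S^{a,2}$, the accretivity hypothesis yields $\fint_S\abs{T^*b^2_{S^{a,2}}}^t\lesssim1$, and the argument of the previous paragraph now gives $\|(\sum_{R\in\mathcal{R}(S^{a,2}),\,R\subseteq S}|\theta_R|^2)^{1/2}\|_{L^s}\lesssim|S|^{1/s}$. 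The crucial point here is that we gain $\abs{S}^{1/s}$ rather than $\abs{S^{a,2}}^{1/s}$, precisely because $(\mathbb{D}_R^{b_1})^*$ only sees $T^*b^2_{S^{a,2}}$ restricted to $R\subseteq S$.

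Finally I assemble the blocks. Writing $F_P:=(\sum_{R\in\mathcal{R}(P)}|\theta_R|^2)^{1/2}$ for $P\in\mathscr{Q}_2$ with $P\subseteq S$, and $F_{\mathrm{top}}$ for the analogous square function of the top block, the previous two paragraphs give $\int F_P^s\lesssim\abs{P}$ and $\int F_{\mathrm{top}}^s\lesssim\abs{S}$. Because $s\le2$, the map $x\mapsto x^{s/2}$ is subadditive on $[0,\infty)$, and hence
\[
  \int_{\R^d}\Big(\sum_{R\subseteq S}\abs{\theta_R}^2\Big)^{s/2}\ud x
  \le\sum_{\substack{P\in\mathscr{Q}_2\\ P\subseteq S}}\int F_P^s+\int F_{\mathrm{top}}^s
  \lesssim\sum_{\substack{P\in\mathscr{Q}_2\\ P\subseteq S}}\abs{P}+\abs{S}
  \lesssim\abs{S},
\]
where the last step is the Carleson packing property of the sparse family $\mathscr{Q}_2$: the sets $E_P:=P\setminus\bigcup\{\widetilde{P}\in\mathscr{Q}_2:\widetilde{P}\subsetneq P\}$ are pairwise disjoint with $\abs{E_P}\ge\tau\abs{P}$, so $\sum_{P\subseteq S}\abs{P}\le\tau^{-1}\abs{S}$. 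Taking $s$-th roots proves the lemma. The main obstacle is exactly this assembly step: a plain triangle inequality over $P$ would produce $\sum_P\abs{P}^{1/s}$, which is far too large for a sparse family; the condition $s\le2$ is what allows one to pass from the $\ell^2$-combination of blocks to an $\ell^s$-type combination, after which the sparseness of $\mathscr{Q}_2$ supplies the packing bound. (A minor technical point: for $R=Q^0$ the operator $\mathbb{D}_{Q^0}^{b_1}$ carries the extra summand $\mathbb{E}_{Q^0}^{b_1}$, and the corresponding term $(\mathbb{E}_{Q^0}^{b_1})^*h$ is a single constant on $Q^0$ that is trivially bounded in $L^s$ by $\Norm{h}{L^s}$, so it adds nothing of substance.)
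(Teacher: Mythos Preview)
Your proof is correct and follows the same block decomposition as the paper (group $R$ by $P=R^{a,2}$, apply Proposition~\ref{prop:LP} on each block to reduce to $\Norm{1_P T^*b^2_{P^{a,2}}}{L^s}\lesssim\abs{P}^{1/s}$), but your \emph{assembly} of the blocks is different. The paper stratifies the stopping cubes $P$ into generations $\mathscr{P}^k(S)$, uses the triangle inequality in $k$ and the disjointness of the $P$'s within each generation to get $\big(\sum_{P\in\mathscr{P}^k(S)}\abs{P}\big)^{1/s}\le((1-\tau)^{k-1}\abs{S})^{1/s}$, and sums the resulting geometric series. You instead use that $s\le 2$ makes $x\mapsto x^{s/2}$ subadditive, so that the $L^s$-integral of the full square function splits additively over blocks, and then invoke the Carleson packing $\sum_{P\subseteq S}\abs{P}\le\tau^{-1}\abs{S}$. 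Your route is a bit slicker here but leans on $s\le 2$ (which holds in this context, since $s'>\max\{t',2\}$); the paper's generation argument works for any $s\in(1,\infty)$ and makes the geometric decay explicit, which is the same device reused in the companion lemma for the $\omega_R^1$ term.
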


\begin{proof}
We reorganise the summation as
\begin{equation}\label{eq:stoppingSplit}
  \sum_{R\subseteq S}=\sum_{\substack{R\subseteq S\\ R^{a,2}=S^{a,2}}}
  +\sum_{\substack{P\subsetneq S\\ P^{a,2}=P}}\sum_{\substack{R\subseteq P\\ R^{a,2}=P}}
  =\sum_{k=0}^\infty\sum_{P\in\mathscr{P}^k(S)}\sum_{\substack{R\subseteq P\\ R^{a,2}=P^{a,2}}},
\end{equation}
where $\mathscr{P}^0(S):=\{S\}$, $\mathscr{P}^1(S)$ consists of the maximal $P\subsetneq S$ with $P=P^{a,2}$, and recursively $\mathscr{P}^{k+1}(S):=\bigcup_{G\in\mathscr{P}^k(S)}\mathscr{P}^1(G)$. Since all $P\in\mathscr{P}^k(S)$ are disjoint for a fixed $k$, we get
\begin{equation*}
  \BNorm{\Big(\sum_{R\subseteq S} \abs{(\mathbb{D}_R^{b_1})^*T^*b_{R^{a,2}}^2}^2\Big)^{1/2}}{L^{s}}
  \leq\sum_{k=0}^\infty\Big(\sum_{P\in\mathscr{P}^k(S)}
    \BNorm{\Big(\sum_{\substack{R\subseteq P\\ R^{a,2}=P^{a,2}}} \abs{(\mathbb{D}_R^{b_1})^*T^*b_{P^{a,2}}^2}^2\Big)^{1/2}}{L^{s}}^s\Big)^{1/s},
\end{equation*}
{and then by the square function estimate from Proposition \ref{prop:LP},
\begin{equation*}
   \lesssim\sum_{k=0}^\infty\Big(\sum_{P\in\mathscr{P}^k(S)}\Norm{1_P T^*b_{P^{a,2}}^2}{L^s}^s\Big)^{1/s}=:A.
\end{equation*}
Next, using H\"older's inequality (recall that $s'>t'$, hence $s<t$) and the assumption of Proposition \ref{prop:babyTb} that $b_Q^2$ is $(\infty,t)$ accretive for $T^*$ on the stopping cubes, 
\begin{equation*}
  \Norm{1_P T^*b_{P^{a,2}}^2}{L^s}
  \leq\abs{P}^{1/s-1/t}\Norm{1_P T^*b_{P^{a,2}}^2}{L^t}\lesssim\abs{P}^{1/s}.
\end{equation*}
Therefore}
\begin{equation*}
  {A\lesssim }\sum_{k=0}^\infty\Big(\sum_{P\in\mathscr{P}^k(S)}\abs{P}\Big)^{1/s}  
  \leq\sum_{k=0}^\infty\Big((1-\tau)^{k-1}\abs{S}\Big)^{1/s} \lesssim \abs{S}^{1/s}.\qedhere
\end{equation*}
\end{proof}

\begin{lemma}
\begin{equation*}
  \BNorm{\Big(\sum_{R\subseteq S}\abs{\omega_R^1 T^*b_{R^{a,2}}^2 }^2\Big)^{1/2}}{L^{s}}
  \lesssim\abs{S}^{1/s}.
\end{equation*}
\end{lemma}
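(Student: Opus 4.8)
The plan is to follow the proof of the preceding lemma nearly verbatim, with the functions $\omega_R^1$ playing the role of $(\mathbb{D}_R^{b_1})^*$; the square-function bound of Proposition~\ref{prop:LP} is the one ingredient that has no literal analogue, and I will replace it by a Carleson-function estimate built from the sparseness of $\mathscr{Q}_1$.

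\emph{Pointwise structure of $\omega_R^1$.} From the identity $(\mathbb{D}_R^{b_1})^2f=\mathbb{D}_R^{b_1}f-\omega_R^1\ave{f}_R$ one reads off that $\omega_R^1$ is supported on the union of those dyadic children $R_i$ of $R$ that lie in $\mathscr{Q}_1$, and that on each such $R_i$ it is bounded: this uses $\Norm{b^1_Q}{\infty}\lesssim1$ and the non-degeneracy $\abs{\ave{b^1_{R_i}}_{R_i}}\gtrsim1$ (valid since $R_i\in\mathscr{Q}_1$). Hence $\abs{\omega_R^1}^2\lesssim\sum_{i:\,R_i\in\mathscr{Q}_1}1_{R_i}$, and since every member of $\mathscr{Q}_1$ is a child of exactly one dyadic cube we get, for any dyadic $P$,
\begin{equation*}
  \sum_{R\subseteq P}\abs{\omega_R^1}^2\lesssim\sum_{\substack{\tilde P\in\mathscr{Q}_1\\ \tilde P\subseteq P}}1_{\tilde P}=:w_P .
\end{equation*}
Decomposing $\{\tilde P\in\mathscr{Q}_1:\tilde P\subseteq P\}$ into its generations $\mathscr{P}^j_{\mathscr{Q}_1}(P)$, which have total measure $\leq(1-\tau)^{j-1}\abs{P}$, we have $w_P=\sum_jw_{P,j}$ with $w_{P,j}:=\sum_{\tilde P\in\mathscr{P}^j_{\mathscr{Q}_1}(P)}1_{\tilde P}\in\{0,1\}$ and $\Norm{w_{P,j}}{1}\leq(1-\tau)^{j-1}\abs{P}$; by the (quasi-)triangle inequality in $L^{r/2}$ and a geometric summation this gives $\Norm{w_P^{1/2}}{L^r(P)}\lesssim_\tau\abs{P}^{1/r}$ for every $r\in(1,\infty)$.

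\emph{The stopping-time decomposition.} I would then split $\sum_{R\subseteq S}$ over the $\mathscr{Q}_2$-generations exactly as in \eqref{eq:stoppingSplit}: $\sum_{R\subseteq S}=\sum_{k\geq0}\sum_{P\in\mathscr{P}^k(S)}\sum_{R\subseteq P,\,R^{a,2}=P^{a,2}}$, with $\sum_{P\in\mathscr{P}^k(S)}\abs{P}\leq(1-\tau)^{k-1}\abs{S}$. For a fixed $P$, every $R$ in the innermost sum has $R^{a,2}=P^{a,2}$ and $\supp\omega_R^1\subseteq R\subseteq P$, so pointwise
\begin{equation*}
  \Big(\sum_{\substack{R\subseteq P\\ R^{a,2}=P^{a,2}}}\abs{\omega_R^1\,T^*b^2_{R^{a,2}}}^2\Big)^{1/2}
  \lesssim w_P^{1/2}\,\abs{T^*b^2_{P^{a,2}}}\,1_P .
\end{equation*}
Since $P\subseteq P^{a,2}\in\mathscr{Q}_2$ is not contained in any strictly smaller member of $\mathscr{Q}_2$ --- for $k\geq1$ because $P^{a,2}=P$, and for $k=0$ by minimality of $S^{a,2}$ --- the accretivity hypothesis yields $\Norm{1_PT^*b^2_{P^{a,2}}}{L^t}\lesssim\abs{P}^{1/t}$. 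Picking $r\in(1,\infty)$ with $1/s=1/r+1/t$ (possible as $s<t$) and using H\"older's inequality,
\begin{equation*}
  \BNorm{w_P^{1/2}\,\abs{T^*b^2_{P^{a,2}}}\,1_P}{L^s}
  \leq\Norm{w_P^{1/2}}{L^r(P)}\,\Norm{1_PT^*b^2_{P^{a,2}}}{L^t}
  \lesssim\abs{P}^{1/r}\abs{P}^{1/t}=\abs{P}^{1/s}.
\end{equation*}
Finally, since the cubes in a fixed generation are disjoint (so their $L^s$-norms add in $\ell^s$) and $\big(\sum_ka_k\big)^{1/2}\leq\sum_ka_k^{1/2}$, I would conclude
\begin{equation*}
  \BNorm{\Big(\sum_{R\subseteq S}\abs{\omega_R^1T^*b^2_{R^{a,2}}}^2\Big)^{1/2}}{L^s}
  \lesssim\sum_{k\geq0}\Big(\sum_{P\in\mathscr{P}^k(S)}\abs{P}\Big)^{1/s}
  \lesssim\sum_{k\geq0}\big((1-\tau)^{k-1}\abs{S}\big)^{1/s}\lesssim_\tau\abs{S}^{1/s}.
\end{equation*}

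\emph{Main obstacle.} The only genuinely new step relative to the preceding lemma is the pair of estimates $\sum_{R\subseteq P}\abs{\omega_R^1}^2\lesssim w_P$ and $\Norm{w_P^{1/2}}{L^r(P)}\lesssim\abs{P}^{1/r}$, i.e.\ the fact that the Carleson function of the sparse family $\mathscr{Q}_1$ lies in every $L^r$ with the expected scaling; this is where I expect the care to be needed, together with the (minor) bookkeeping that the redefinition of $\mathbb{D}_{Q^0}^{b_1}$ affects only the single term $R=Q^0$, which contributes an extra $O(\abs{S}^{1/s})$ at worst.
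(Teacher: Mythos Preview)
Your proposal is correct and follows essentially the same route as the paper's proof: the $\mathscr{Q}_2$-stopping decomposition \eqref{eq:stoppingSplit}, then H\"older on each piece $P$ to split into $\Norm{w_P^{1/2}}{L^r}$ and $\Norm{1_PT^*b^2_{P^{a,2}}}{L^t}$ with $1/s=1/r+1/t$, and finally the Carleson estimate for $w_P$ via the sparseness of $\mathscr{Q}_1$. The only cosmetic differences are that the paper writes the Carleson bound as $\sum_{R\subseteq P,\,R^{a,1}=R}1_R$ (the same set as your $\sum_{\tilde P\in\mathscr{Q}_1,\,\tilde P\subseteq P}1_{\tilde P}$) and controls its $L^r$ norm via $(\sum_k a_k)^{1/2}\leq\sum_k a_k^{1/2}$ plus the triangle inequality, rather than your (quasi-)triangle inequality in $L^{r/2}$.
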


\begin{proof}
This is proven by a similar splitting but, instead of the square function estimate for $(\mathbb{D}_R^{b_1})^*$, using
\begin{equation*}
\begin{split}
  \BNorm{\Big(\sum_{R\subseteq P} \abs{\omega_R^1 T^*b_{P^{a,2}}^2}^2\Big)^{1/2}}{L^{s}}
   &=\BNorm{\Big(\sum_{R\subseteq P} \abs{\omega_R^1}^2\Big)^{1/2} \abs{T^*b_{P^{a,2}}^2}}{L^{s}} \\
   &\leq \BNorm{\Big(\sum_{R\subseteq P} \abs{\omega_R^1}^2\Big)^{1/2}}{L^u}\Norm{1_PT^*b_{P^{a,2}}^2}{L^t},\qquad \frac{1}{s}=\frac{1}{u}+\frac{1}{t}.
\end{split}
\end{equation*}
Then, using a decomposition as in \eqref{eq:stoppingSplit} but in terms of the stopping cubes on the $b_1$-side rather than $b_2$ side,
\begin{equation*}
\begin{split}
  \BNorm{\Big(\sum_{R\subseteq P} \abs{\omega_R^1}^2\Big)^{1/2}}{L^u}
  &\lesssim\BNorm{\Big(\sum_{\substack{R\subseteq P\\ R^a=R}} 1_R \Big)^{1/2}}{L^u}
  \leq\sum_{k=0}^\infty\Big(\sum_{G\in\mathscr{P}^k(P)}\abs{G}\Big)^{1/u} \\
  &\leq\sum_{k=0}^\infty\Big((1-\tau)^{k-1}\abs{P}\Big)^{1/u}\lesssim\abs{P}^{1/u}.
\end{split}
\end{equation*}
Since $\Norm{1_PT^*b_{P^{a,2}}^2}{L^t}\lesssim\abs{P}^{1/t}$, we get the asserted bound $\abs{P}^{1/u}\abs{P}^{1/t}=\abs{P}^{1/s}$.
\end{proof}

\subsection{The diagonal}
It only remains to estimate
\begin{equation*}
\begin{split}
  \sum_R\abs{\pair{T\mathbb{D}_R^{b_1}f}{\mathbb{D}_R^{b_2}g}}
  &\leq\sum_R\sum_{i,j=1}^{2^d} \abs{\pair{T(1_{R_i}\mathbb{D}_R^{b_1}f)}{1_{R_j}\mathbb{D}_R^{b_2}g}} \\
  &\lesssim\sum_R\sum_{i,j:i\neq j}\Norm{1_{R_i}\mathbb{D}_R^{b_1}f}{2}\Norm{1_{R_j}\mathbb{D}_R^{b_2}g}{2}
     +\sum_R\sum_j\abs{\pair{T(1_{R_j}\mathbb{D}_R^{b_1}f)}{1_{R_j}\mathbb{D}_R^{b_2}g}},
\end{split}
\end{equation*}
where the unequal subcubes were estimated by Hardy's inequality, and this part is readily bounded by $\Norm{f}{2}\Norm{g}{2}$. For the final part, we write, as in \eqref{eq:nestRewrite},
\begin{equation*}
\begin{split}
  1_{R_j}\mathbb{D}_R^b f
  =\frac{1_{R_j}b_{R_j^{a}}}{\ave{b_{R_j^a}}_{R_j}}\ave{\mathbb{D}_R^b f}_{R_j}
  +1_{R_j}\Big(\frac{\ave{b_{R^a}}_{R_j}}{\ave{b_{R_j^a}}_{R_j}}b_{R_j^a}-b_{R^{a}}\Big)\frac{\ave{\mathbb{D}_{R,0}^b f}_{R}}{\ave{b_{R^{a}}}_{R}}.
\end{split}
\end{equation*}
and similarly for $1_{R_j}\mathbb{D}_R^{b_2}g$. Thus
\begin{equation*}
\begin{split}
  &\abs{\pair{T(1_{R_j}\mathbb{D}_R^{b_1}f)}{1_{R_j}\mathbb{D}_R^{b_2}g}}  \\
  &\lesssim\Big(\sum_{i,h\in\{0,j\}}\abs{\pair{T(1_{R_j}b^1_{R_i^{a,1}})}{1_{R_j}b^2_{R_h^{a,2}}}}\Big)
     \Big(\sum_{i,h\in\{0,j\}}\abs{\ave{\mathbb{D}_{R,i}^{b_1}f}_{R_i}}\abs{\ave{\mathbb{D}_{R,h}^{b_2}g}_{R_h}}\Big),
\end{split}
\end{equation*}
and it remains to check that the first term is dominated by $\abs{R}$, for then e.g.
\begin{equation*}
  \abs{R}^{1/2}\abs{\ave{\mathbb{D}_{R,i}^{b_1}f}_{R_i}}\lesssim\Norm{\mathbb{D}_{R,i}^{b_1}f}{2},
\end{equation*}
and it is easy to conclude by Proposition~\ref{prop:LP}.

If $i=j$, then
\begin{equation*}
 \abs{ \pair{T(1_{R_j}b^1_{R_i^{a,1}})}{1_{R_j}b^2_{R_h^{a,2}}} }
 \leq \Norm{1_{R_j}T(1_{R_j}b^1_{R_j^{a,1}})}{1}\Norm{b^2_{R_h^{a,2}}}{\infty}
 \lesssim \abs{R_j}\cdot 1\lesssim\abs{R}
\end{equation*}
directly from the definition of strong accretivity (with $Q'=R_j$ and $Q=R_j^{a,1}$)

If $i=0$, then
\begin{equation*}
\begin{split}
    \abs{ \pair{T(1_{R_j}b^1_{R_i^{a,1}})}{1_{R_j}b^2_{R_h^{a,2}}} }
    &= \abs{ \pair{T(1_{R}b^1_{R^{a,1}})}{1_{R_j}b^2_{R_h^{a,2}}} - \pair{T(1_{R\setminus R_j}b^1_{R^{a,1}})}{1_{R_j}b^2_{R_h^{a,2}}} }  \\
    & \lesssim \Norm{1_R T(1_{R}b^1_{R^{a,1}})}{1}\Norm{b^2_{R_h^{a,2}}}{\infty}
        +\Norm{1_{R\setminus R_j}b^1_{R^{a,1}}}{2}\Norm{1_{R_j}b^2_{R_h^{a,2}}}{2} \\
     &\lesssim\abs{R}\cdot 1+\abs{R}^{1/2}\abs{R}^{1/2}\lesssim\abs{R},
\end{split}
\end{equation*}
where we used Hardy's inequality for the second term and the strong accretivity (with $Q'=R$ and $Q=R^{a,1}$) to the first one.

This completes the proof of the ``baby $Tb$ theorem'', Proposition~\ref{prop:babyTb}. Since this was the last missing ingredient, the proofs of the Main Theorem~\ref{thm:main} and Theorem~\ref{thm:Hofmann} are also finished.

\section{Concluding remark}

We have completed the proofs of Theorems~\ref{thm:Hofmann} and \ref{thm:main}, dealing with the boundedness of singular integral operators on $\R^d$ with the Lebesgue measure. Using the dyadic cubes of Christ~\cite{Christ:90} in place of the Euclidean dyadic cubes, these results extend to a metric space with a doubling measure without difficulty. In particular, a Hardy inequality is also valid for Christ's dyadic cubes, as observed by Auscher and Routin \cite{AR}. A version for non-doubling measures is recently due to Martikainen, Mourgoglou and Tolsa \cite{MMT}, but only for antisymmetric kernels.


\def\cprime{$'$}

\end{document}